\newtheorem{thm}{Theorem}[section]
\newtheorem{prop}[thm]{Proposition}
\newtheorem{cor}[thm]{Corollary}
\newtheorem{ass}[thm]{Assumption}
\newtheorem{lemma}[thm]{Lemma}
\newtheorem{defn}[thm]{Definition}
\theoremstyle{remark}
\newtheorem{remark}[thm]{Remark}
\numberwithin{equation}{section}
\newcommand{\R}{\mathbb R}
\newcommand{\eps}{\varepsilon}
\newcommand{\dd} {\; \mathrm{d}}
\DeclareMathOperator*{\osc}{osc}
\DeclareMathOperator{\diam}{diam}
\DeclareMathOperator{\PV}{PV}
\newcommand{\LL}{\mathcal L}
\newcommand{\K}{\mathcal K}
\title{The Schauder estimate for kinetic integral equations}
\date{\today}
\author{Cyril Imbert} \address[C.~Imbert]{CNRS \& Instituto de
  matem\'atica pura e aplicada, Estrada Dona Castorina, 110, Jardim
  Bot\^anico, CEP 22460-320, Rio de Janeiro, RJ, Brasil}
\email{Cyril.Imbert@ens.fr} \author{Luis Silvestre} \thanks{The
  authors would like to thank C.~Mouhot for fruitful discussions. LS
  is supported in part by NSF grants DMS-1254332 and DMS-1764285.}
\address[L.~Silvestre]{Mathematics Department, University of Chicago,
  Chicago, Illinois 60637, USA} \email{luis@math.uchicago.edu}
\begin{document}
\begin{abstract}
  We establish interior Schauder estimates for kinetic equations with
  integro-differential diffusion. We study equations of the form
  $f_t + v \cdot \nabla_x f = \mathcal L_v f + c$, where
  $\mathcal L_v$ is an integro-differential diffusion operator of
  order $2s$ acting in the $v$-variable. Under suitable ellipticity
  and H\"older continuity conditions on the kernel of $\mathcal L_v$,
  we obtain an a priori estimate for $f$ in a properly scaled H\"older
  space.
\end{abstract}
\maketitle

\section{Introduction}

We study kinetic equations with integral diffusion of the form
\begin{equation} \label{e:main}
 f_t + v \cdot \nabla_x f = \int_{\R^d} (f'-f) K(t,x,v,v') \dd v' + c_0(t,x,v).
\end{equation}
Here, we used the notation from kinetic equations $f=f(t,x,v)$ and
$f'=f(t,x,v')$.  The main result in this article is a
Schauder estimate for equations of the form \eqref{e:main}
whose kernels $K_{(t,x,v)}$ are elliptic and H\"older continuous.

Note that the integral diffusion term on the right hand side of \eqref{e:main} acts on the velocity variable only. The regularization effect on the $x$ variable is a consequence of the interaction between the integral diffusion and the transport term. The equation \eqref{e:main} should be understood as a Kolmogorov-type hypoelliptic integro-differential equation. The diffusion is of fractional order. We work with H\"older-like spaces (given in Definition \ref{d:holder-space}) that are adapted to the particular scaling of this equation in each direction.

Our methods allow us to consider a very general class of kernels $K$. This is essential for the eventual applications of our result to the Boltzmann equation. We start by specifying our notion of ellipticity and H\"older continuity for the kernel $K$. In
\eqref{e:main}, $K(t,x,v,v')$ denotes a function that maps the
variables $(t,x,v)$ into a nonnegative Radon measure $K_{(t,x,v)}$ in $\R^d \setminus \{0\}$:
\[ K_{(t,x,v)} (w) := K(t,x,v,v+w)\]
such that for all $(t,x,v)$, $K_{(t,x,v)}$ belongs to the following ellipticity class of kernels.
%---------------------------------------------------------------------
\begin{defn}[The ellipticity class] \label{d:class-of-kernels} Given
  the order $2s \in (0,2)$ and ellipticity constants
  $0<\lambda<\Lambda$, we say that a nonnegative Radon measure $K$ in $\R^d \setminus \{0\}$ belongs
  to the ellipticity class $\K$ when the following conditions
  are met. 
\begin{itemize}
\item \textsc{(Symmetry)} $K(w) = K(-w)$.
\item \textsc{(Upper bound)} for all $r>0$
\begin{equation} \label{e:K-upper-bound}
 \int_{B_r} |w|^2 K(w) \dd w \leq \Lambda r^{2-2s}.
\end{equation}
\item \textsc{(Coercivity estimate)} for any $R>0$ and $\varphi \in C^2(B_R)$,
  \begin{equation}
             \label{e:K-coercivity}
         \iint_{B_R \times B_R} |\varphi(v) - \varphi(v')|^2 K(v'-v) \dd v' \dd v \geq \lambda  \iint_{B_{R/2} \times B_{R/2}} |\varphi(v) - \varphi(v')|^2 |v'-v|^{-d-2s} \dd v' \dd v.
\end{equation}
In case $s <(0,1/2)$, we add the following non-degeneracy assumption to the kernel.
\begin{equation} \label{e:K-nondeg}
 \inf_{|e| =1 } \int_{B_r} (w \cdot e)_+^2 K (w) \dd w \ge \lambda r^{2-2s}. 
\end{equation}
\end{itemize}
\end{defn}
%-------------------------------------------------------------------
\begin{remark}
  Strictly speaking, \eqref{e:K-upper-bound},  \eqref{e:K-coercivity} and \eqref{e:K-nondeg} should be written with integrals on balls
  minus the origin. It is customary to extend the measure $K \dd w$ to
  have zero point mass at the origin. Other choices do not make any
  difference since all our integrands equal zero at $w=0$.
\end{remark}

\begin{remark}
We write $K$ to denote a nonnegative measure on $\R^d$. Even
though we use the notation $K(v') \dd v'$ as if this measure was
absolutely continuous, it does
not need to be. We abuse notation in this way because it makes some
formulas look simpler. For example, we write $K(v'-v)\dd v'$ to denote
the measure in terms of the variables $v'$ and translated by
$v$. Otherwise, for a measure $\mu=K(w) \dd w$, it would typically be
written $\dd ( \tau_{v}\mu(v'))$ which is arguably more confusing.
\end{remark}

\begin{remark}
When $s < 1/2$ we complement the coercivity estimate \eqref{e:K-coercivity} with the non-degeneracy assumption \eqref{e:K-nondeg}. These two assumptions may be redundant. Indeed, for stable-like kernels of the form $K(w) = |w|^{-d-2s} a(w/|w|)$, they are equivalent. This follows easily by computing the Fourier symbol of the operator associated with $K$ (see for example \cite{samorodnitsky1996stable}, and also \cite{MR3482695}). For non-stable-like processes the situation is less clear. We do not know any example of a kernel $K$ satisfying the upper bound \eqref{e:K-upper-bound} that satisfies one of the assumptions in the coercivity estimate but not the other. The non-degeneracy assumption \eqref{e:K-nondeg} is typically much easier to check than the first coercivity estimate \eqref{e:K-coercivity}.
\end{remark}

For local equations, a Schauder estimate refers to an estimate in a
H\"older space when coefficients of the equations are H\"older
continuous.  For non-local equations, the regularity of the
coefficients is replaced with the H\"older continuous dependence of
the kernel with respect to the  variable $z=(t,x,v)$. 
%----------------------------------------------------------------------------
\begin{ass}[H\"older continuity of coefficients] \label{a:K-holder}
Given $z_1 = (t_1,x_1,v_1)$ and $z_2 = (t_2,x_2,v_2)$, for any $r>0$ we have
\[ \int_{B_r} |K_{z_1}(w) - K_{z_2}(w)| |w|^2 \dd w \leq A_0 r^{2-2s} d_\ell(z_1,z_2)^\alpha,\]
where $d_\ell(z_0,z_1)$ stands for the kinetic distance, see Definition \ref{d:distance} below.
\end{ass}
%---------------
We can now state the Schauder estimate for the equations we consider.
%---------------------------------------------------
\begin{thm}[The Schauder estimate] \label{t:main} Let
  $0 < \gamma < \min(1,2s)$ and $\alpha = \frac{2s}{1+2s} \gamma$.
  Let $K(t,x,v,v')$ be a kernel such that the two following conditions
  hold true.
\begin{itemize}
\item \textsc{(Ellipticity)} For each $z=(t,x,v) \in Q_1$, the kernel $K_z(w) = K(t,x,v,v+w)$ belongs to the class $\K$ described in Definition \ref{d:class-of-kernels}.
\item \textsc{(H\"older continuity)} Assumption \ref{a:K-holder} holds.
\end{itemize}
If
$f \in C_\ell^\gamma([-1,0] \times B_1 \times \R^d)$ solves \eqref{e:main} in $Q_1$, then the following estimate holds
\[ \|f\|_{C_\ell^{2s+\alpha}(Q_{1/2})} \leq C \left( \|f\|_{C_\ell^\gamma([-1,0] \times B_1 \times \R^d)} + \|c\|_{C_\ell^\alpha(Q_1)} \right).\]
The constant $C$ depends on $d$, $s$, $\lambda$, $\Lambda$, and $A_0$.
\end{thm}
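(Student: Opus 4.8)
The plan is to run a perturbative, Campanato-type Schauder argument adapted to the kinetic dilations $T_r\colon(t,x,v)\mapsto(r^{2s}t,\,r^{1+2s}x,\,rv)$, which preserve the structure of \eqref{e:main}. We argue a priori, assuming $f$ as smooth as needed for every manipulation below, the general case following by a routine regularization of $K$, $c$ and $f$. By the kinetic (Galilean) group structure it suffices to produce, for each base point $z_0\in Q_{1/2}$ — which we bring to the origin by a kinetic translation — a polynomial $P_{z_0}$ taken from a suitable finite-dimensional class of \emph{admissible} kinetic polynomials (essentially those of kinetic degree $<2s+\alpha$ that solve $\partial_t p + v\cdot\nabla_x p = \mathcal L_{K_{z_0}}p + c(z_0)$, where $\mathcal L_{K_{z_0}}$ is the frozen operator) such that
\[
\|f - P_{z_0}\|_{L^\infty(Q_r(z_0))} \le C\, r^{2s+\alpha}\bigl(\|f\|_{C_\ell^\gamma([-1,0]\times B_1\times\R^d)} + \|c\|_{C_\ell^\alpha(Q_1)}\bigr)\qquad\text{for all } r\le\tfrac14,
\]
with the coefficients of $P_{z_0}$ controlled by the same quantity and with $z_0\mapsto P_{z_0}$ Hölder of the orders dictated by the exponents. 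By the Campanato characterization of the kinetic Hölder spaces this is equivalent to the claimed bound on $\|f\|_{C_\ell^{2s+\alpha}(Q_{1/2})}$. Note that $\alpha<1$ forces the admissible polynomials to be independent of $x$, and $\gamma<2s$ is exactly the condition under which the nonlocal term in \eqref{e:main} is finite for a function that is $C_\ell^\gamma$ globally in $v$.

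The first ingredient is an interior estimate for the constant-kernel equation $\partial_t g + v\cdot\nabla_x g = \mathcal L_{K_0}g$ in $Q_1$, with $K_0\in\K$: any solution with $\|g\|_{L^\infty(Q_1)}\le1$ and controlled $v$-tails satisfies $\|g\|_{C_\ell^\beta(Q_{1/2})}\le C_\beta$ for every $\beta$. This is bootstrapped from the De Giorgi--Nash--Moser-type Hölder estimate for kinetic integro-differential equations: since $\mathcal L_{K_0}$ carries no $(t,x,v)$-dependence and commutes with the vector fields generated by $v\cdot\nabla_x$ and by the dilations, one differentiates (or takes kinetic increments) along those directions and re-applies the basic estimate, transferring the regularity gained in $v$ to regularity in $x$ and $t$ in the hypoelliptic way. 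From this we extract an \emph{improvement of flatness}: fixing $\sigma>0$ small enough that no kinetic monomial has degree in $[2s+\alpha,\,2s+\alpha+\sigma)$, the admissible jet $Jg$ of $g$ at the origin satisfies $\|g-Jg\|_{L^\infty(Q_\rho)}\le C_0\rho^{2s+\alpha+\sigma}\|g\|_{L^\infty(Q_{1/2})}$ for all $\rho\in(0,1)$; we fix $\rho$ once and for all with $C_0\rho^\sigma\le\tfrac14$. Moreover the same improvement persists, with an extra $\tfrac14\rho^{2s+\alpha}$ of slack, when $g$ solves the frozen equation only up to a right-hand side of size $\le\delta_0$ for a small $\delta_0=\delta_0(\rho)$ to be chosen.

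Now normalize so that $\|f\|_{C_\ell^\gamma}+\|c\|_{C_\ell^\alpha}\le\delta_0$, and construct admissible polynomials $P_k$ with $\|f-P_k\|_{L^\infty(Q_{\rho^k})}\le\rho^{k(2s+\alpha)}$. We start the iteration at $k=k_0$, with $k_0$ depending only on $d,s,\lambda,\Lambda,A_0$ chosen so that $A_0\rho^{k_0\alpha}\le\delta_0$ (the finitely many initial scales being absorbed into the final constant). Given $P_k$, set $\tilde f:=\rho^{-k(2s+\alpha)}(f-P_k)\circ T_{\rho^k}$, of unit size on $Q_1$; it solves on $Q_1$ a kinetic integral equation whose kernel differs from $K_0:=K_{z_0}$, at unit scale, by at most $A_0\rho^{k\alpha}\le\delta_0$ (by Assumption \ref{a:K-holder}, $d_\ell(z_0,T_{\rho^k}z)\lesssim\rho^k$, and the normalization \eqref{e:K-upper-bound} of the scales), whose source is $\rho^{-k\alpha}(c-c(z_0))\circ T_{\rho^k}$ of size $\le\|c\|_{C_\ell^\alpha}\le\delta_0$, and which carries an additional far-field term coming from replacing $\mathcal L_v$ by the near-field part of $\mathcal L_{K_0}$ (and from subtracting the admissible polynomial only on the cylinder), again $\le C\delta_0$ because $f$ is globally $C_\ell^\gamma$ in $v$ with $\gamma<2s$; the choice of the admissible class makes the residual $\partial_t P_k+v\cdot\nabla_x P_k-\mathcal L_{K_0}P_k$ equal to $c(z_0)$ up to lower-order errors $\le C\delta_0$ from the truncation of $\mathcal L_{K_0}$ on quadratic-in-$v$ terms. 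Splitting $\tilde f=\tilde g+\tilde h$ with $\tilde g$ solving the frozen equation with $\tilde f$'s data and $\tilde h$ carrying all the small right-hand sides, the maximum principle and stability for the kinetic integral equation give $\|\tilde h\|_{L^\infty}\le\tfrac14\rho^{2s+\alpha}$ once $\delta_0$ is small; applying the improvement of flatness to $\tilde g$ and undoing $T_{\rho^k}$ produces $P_{k+1}=P_k+(\text{rescaled jet of }\tilde g)$, admissible, with $\|f-P_{k+1}\|_{L^\infty(Q_{\rho^{k+1}})}\le\rho^{k(2s+\alpha)}(\tfrac12+\tfrac14)\rho^{2s+\alpha}\le\rho^{(k+1)(2s+\alpha)}$. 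The $P_k$ form a Cauchy sequence; their limit is $P_{z_0}$, its coefficients are $O(1)$, interpolating between consecutive radii gives the displayed estimate for all $r\le\tfrac14$, and comparing the constructions at nearby base points gives the Hölder dependence $z_0\mapsto P_{z_0}$, whence the claim.

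I expect the two genuinely hard points to be the first ingredient and the nonlocal bookkeeping. Pushing the interior regularity of the frozen equation up to order $2s+\alpha$ (rather than some unspecified small exponent) requires a careful hypoelliptic bootstrap — transferring the $v$-gain of the fractional diffusion into $x$ and $t$ through the transport commutator, in a scaling-consistent way and with the fractional order $2s<2$. Simultaneously, because $2s<2$, polynomials of super-linear $v$-growth are not in the naive domain of $\mathcal L_{K_0}$, so every step — subtracting $P_k$, rescaling, splitting near- and far-field — must be done with increments and using the symmetry of $K$ for the needed cancellations, and one must verify that the accumulated far-field contributions, though nonzero, stay $O(\delta_0)$ at every scale and hence do not destroy the geometric decay. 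Making the class of admissible kinetic polynomials precise (in particular identifying the $\mathcal L_{K_0}$-caloric polynomials and handling their $z_0$-dependence) and establishing the frozen-equation stability estimate in a form robust enough to tolerate a merely $C_\ell^\alpha$, tail-corrected right-hand side are the remaining technical points.
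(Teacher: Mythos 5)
Your Campanato/improvement-of-flatness scheme is a genuinely different route from the paper's blow-up-plus-Liouville argument, but as written it has a gap at the coefficient-freezing step which the iteration cannot close. At scale $\rho^k$ the error produced by replacing $K_z$ by the frozen kernel $K_{z_0}$ is $\int \bigl(f(z\circ(0,0,w))-f(z)\bigr)\bigl(K_z(w)-K_{z_0}(w)\bigr)\,dw$, and smallness of the kernel difference (your $A_0\rho^{k\alpha}\le\delta_0$, via Assumption~\ref{a:K-holder}) does not make this term small: its near-field part needs cancellation of order strictly greater than $2s$ in $v$. With only the normalized global bound $\|f\|_{C_\ell^\gamma}\le\delta_0$, $\gamma<2s$, one has $|f(z\circ(0,0,w))+f(z\circ(0,0,-w))-2f(z)|\lesssim\delta_0|w|^\gamma$, and $\int_{B_1}|w|^\gamma\,|K_z-K_{z_0}|(w)\,dw$ diverges (dyadic rings give $\sum_j 2^{j(2s-\gamma)}$), while the $L^\infty$-flatness $\|f-P_k\|_{L^\infty(Q_{\rho^j}(z_0))}\le\rho^{j(2s+\alpha)}$, available only at the base point $z_0$ and only down to scale $\rho^k$, does not supply the needed $C^{2s+}$ control of $f$ at off-center points $z$ and at smaller scales. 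This is exactly why the paper keeps the freezing error in the form $A_0\|f\|_{C_\ell^{2s+\alpha}(Q_1)}$ (Lemma~\ref{l:estim-A}), makes $A_0$ small by an initial scaling, and absorbs that term only at the very end through the adimensional norms and interpolation. Your iteration normalizes only $\|f\|_{C_\ell^\gamma}+\|c\|_{C_\ell^\alpha}$ and has no absorption mechanism, so the claimed per-scale bound $\le C\delta_0$ for this error does not follow.

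The second problem is your ``first ingredient'': interior $C_\ell^{2s+\alpha+\sigma}$ regularity (hence improvement of flatness) for the frozen equation with a merely measurable kernel in $\mathcal{K}$. This is essentially the constant-coefficient Schauder estimate, i.e.\ Proposition~\ref{p:blowup}, which is the heart of the paper; it cannot be assumed as an input. Making the incremental hypoelliptic bootstrap rigorous requires crossing the thresholds in $\mathbb{N}+2s\mathbb{N}$ with higher-order kinetic increments, controlling growth and tails of those increments (recall that $\mathcal{L}f$ itself need not be defined; only the increments $g$ satisfy an equation, cf.\ the remark after Theorem~\ref{t:liouville} and the majorant \eqref{e:majorant}), and you also invoke, without proof, well-posedness and a comparison principle for the exterior-data problem defining the splitting $\tilde f=\tilde g+\tilde h$. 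The paper's blow-up/compactness reduction to the Liouville Theorem~\ref{t:liouville} exists precisely to bypass these difficulties. Finally, your assertion that the far-field/cutoff errors are $O(\delta_0)$ at every scale is where the exponent relation $\alpha=\tfrac{2s}{1+2s}\gamma$ must actually be used: the oscillation of the tail term over $Q_r(z_0)$ is only $\lesssim[f]_{C_\ell^\gamma}\,r^{2s\gamma/(1+2s)}$, because the Galilean composition displaces the far point by about $\bigl(|w|\,d_\ell(z_1,z_2)^{2s}\bigr)^{1/(1+2s)}$ (see Lemmas~\ref{l:estimA0-loc} and~\ref{l:estim-B}); attributing the bound merely to $\gamma<2s$ misses the decisive computation, although this particular piece can be repaired.
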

%------------------------------------------------

We use the same notation as in \cite{imbert2016weak}: $Q_r$ denotes the kinetic cylinder $Q_r := (-r^{2s},0] \times B_{r^{1+2s}} \times B_r$.

\begin{remark}
  The H\"older norms $C_\ell^{2s+\alpha}$ and $C_\ell^\alpha$ must be
  appropriately understood. They refer to the usual notion of
  $C^{2s+\alpha}$ and $C^\alpha$ regularity with respect to the $v$
  variable. The order of regularity in the other directions is
  adjusted in terms of the invariant structure of the class of
  equations.  On the one hand, the fact that the diffusion is of order
  $2s$ yields an invariant scaling. On the other hand the equation
  enjoys Galilean invariance, yielding a Lie group structure. We
  discuss other choices of distances and their differences in
  Section~\ref{s:discussion}. H\"older spaces are introduced in
  Definition~\ref{d:holder-space} below. The subindex ``$\ell$''
  refers to the fact that the H\"older norm is taken with respect to a
  distance that is left-invariant by the Lie group structure. 
\end{remark}
\begin{remark} \label{rem:alpha} Note that our theorem holds for
  $\alpha = \frac{2s}{1+2s} \gamma  < \gamma$. The distinction between
  these two H\"older exponents $\alpha$ and $\gamma$ comes from
  technical reasons related to the fact that the class of equations is
  left invariant, but not right invariant with respect to the Lie
  group structure.
\end{remark}
\begin{remark}
  Note that the global $C_\ell^\gamma$ norm of $f$ cannot be replaced by
  its global $L^\infty$ norm even in the case of the space-homogeneous
  parabolic fractional heat equation. A solution $f(t,v)$ to
\[ f_t + (-\Delta)_v^s f = 0 \qquad \text{in } Q_1,\]
does {\bf not} satisfy the estimate
\[ \|f\|_{C_\ell^{2s+\alpha}(Q_{1/2})} \leq C \|f\|_{L^\infty([-1,0] \times
    B_1 \times \R^d)}.\] This is because $f$ will not be better than
Lipschitz in time, even though it will have more regularity in
space (See \cite[section~2.4.1]{chang}). The H\"older space
$C_\ell^{2s+\alpha}$ would impose $C^1$ regularity in time for any
$\alpha > 0$.
\end{remark}

\begin{remark}
  Note that the equation \eqref{e:main} does not have a structure
  compatible with the notion of weak solutions in the sense of
  distributions. It is not an equation in divergence form. Our result
  in this paper is an a priori estimate provided that all quantities
  involved make sense classically. It is possible to define a weaker
  notion of solution of \eqref{e:main} in the \emph{viscosity sense},
  and presumably our result in Theorem \ref{t:main} applies to that
  case as well. However, we do not pursue that direction in this paper
  since it would add some technical difficulties obfuscating the
  proofs. The result as currently stated is what we need for our
  intended applications to the Boltzmann equation.
\end{remark}

\begin{remark}
If we want our estimates to
  hold uniformly as $s \to 1$, we would have to replace the constant
  $\lambda$ in \eqref{e:K-coercivity} by $(1-s) \lambda$. The results in this article hold in the local case $s=1$ as well,
  with considerably simplified proofs. It would apply to an equation of the form
\[ f_t + v \cdot \nabla_x f = a_{ij}(t,x,v) \partial_{v_i v_j} f + c(t,x,v).\]
Schauder estimates for these and more general equations have been studied before (see the next subsection). Our approach is quite different to earlier works, starting from the fact that we use a different definition of the H\"older norm. In the case $s=1$, some of the difficulties in the proofs presented here disappear. The ellipticity class in Definition \ref{d:class-of-kernels} would be replaced by the usual uniform ellipticity condition of the coefficients $a_{ij}$. The Assumption \ref{a:K-holder} would translate as the H\"older continuity assumption of these coefficients. The section about weak limits of kernels would be unnecessary since it would be replaced by the simple convergence of the matrix of coefficients. The majorant function defined in \eqref{e:majorant}, that plays an important role later in the proof of the Liouville theorem, would be irrelevant since the equation is local. Consequently, the final estimate would be in terms of $\|f\|_{L^\infty(Q_1)}$ instead of $\|f\|_{C^\gamma((-1,0] \times B_1 \times \R^d)}$. Moreover, the equation \eqref{e:L-eq-for-g} in the Liouville theorem could be written in terms of $f$ directly, instead of introducing the function $g$. The final result is an interior estimate for the $C^{2+\alpha}_\ell$ norm with $s=1$. This norm is comparable but contains more explicit information than the norms used previously in the literature (see the norm $H^\alpha$ in \cite{imbert2018toy} for example, or equivalently the norm $\|\cdot\|_{2+\alpha,\Omega}$ in \cite{sergio2004recent}). Indeed, the inequality $\|f\|_{H^\alpha} \lesssim \|f\|_{C^{2+\alpha}_\ell}$ follows easily from Lemma \ref{l:holder-derivatives}. The inequality in the opposite direction is much less obvious. A posteriori, one can get it (away from the boundary) as a consequence of our Schauder estimates. Indeed, if $f \in H^\alpha$, then it is clear that $f_t + v \cdot \nabla_x f - \Delta_v f \in C_\ell^\alpha$. Our Schauder estimate for the case $s=1$ tells us that
\[ [f]_{C_\ell^{2+\alpha}(Q_{1/2})} \lesssim \|f\|_{L^\infty(Q_1)} + \| f_t + v \cdot \nabla_x f - \Delta_v f \|_{C^\alpha(Q_1)} \leq \|f\|_{H^\alpha(Q_1)}.\] 
For $s<1$, it is not in general possible to redefine the space $C^{2s+\alpha}_\ell$ in terms of H\"older norms of derivatives as in the classical definition of $H^\alpha$.
\end{remark}

\subsection{Schauder estimates for kinetic and non-local equations}

Linear kinetic equations of second order are a particular instance of
the more general theory of ultraparabolic equations of Kolmogorov
type. Results involving regularity estimates in H\"older spaces for
these equations appeared especially in the late 1990's. See
\cite{sayro1971,manfredini, lunardi1997, eidelman1998,
  dfp,radkevich2008}, and the survey article
\cite{sergio2004recent}. More recently in
\cite{henderson2017c,imbert2018toy}, Schauder estimates were applied
to bootstrap higher regularity estimates for second order models in
kinetic theory, including the Landau equation with moderately soft
potentials. The Boltzmann equation can be written in the form
\eqref{e:main} for a kernel $K$ depending on the solution itself (see
\cite{luis-cmp}). It is our intention to use the result of
Theorem~\ref{t:main} to derive higher order regularity estimates for
the non-cutoff Boltzmann equation in (shortly) forthcoming work.

Schauder estimates for integro-differential equations have been
obtained in recent years, see
\cite{MP,tj2015,serra1,joaquim2015,imbert2016schauder,tj2018}. They have the well known
difficulty that the smoothness of the tails of the integrals outside
of the domain of the equation are difficult to control. There are two
common workarounds that have been used in the literature. One
workaround that works well in the elliptic case is to impose some
regularity in the kernels $K$ with respect to the variable of
integration $w$. Another approach, arguably more delicate, imposes
extra regularity on the values of the function $f$ outside of the
domain. That is the case in \cite{serra1},
\cite{joaquim2015},\cite{tj2018} and it is also the approach we take
here. In our kinetic setting, this restriction goes a bit further by
requiring the function $f \in C_\ell^\gamma$ outside of $Q_1$, with
$\gamma > \alpha$. 

In this paper, we use some key ideas that originated in \cite{serra1} and simplify enormously the general procedure to prove the Schauder estimates in the nonlocal setting. The key of the proof of Theorem \ref{t:main} is a combination of a blow-up technique (see Proposition \ref{p:blowup}) with a Liouville theorem (see Theorem \ref{t:liouville}).

\subsection{Possible extensions and outstanding questions}

It is most probably possible to extend Theorem~\ref{t:main} to
  higher values of $\gamma$ such that
  $\gamma \notin \mathbb N + 2s \mathbb N$. It would require the
  extension of Assumption~\ref{a:K-holder} for $\alpha$ large, or a
  version of \eqref{e:L-eq-for-g} involving higher order incremental
  quotients. It would not be obvious how to imply the result of
  Theorem \ref{t:main} for higher values of $\gamma$ by simply taking
  derivatives. Knowing $f \in C_\ell^\gamma$ gives us a clean estimate for
  $f_t + v \cdot \nabla_x f$ in $C_\ell^{\gamma-2s}$ and $\partial_{v_i} f$
  in $C_\ell^{\gamma-1}$. These two derivatives do not solve an equation
  like \eqref{e:main}. One might attempt to apply hypoelliptic estimates
  to derive corresponding H\"older spaces for $f_t$ and
  $\partial_{v_i}f+v_i \partial_{x_i} f$. But then we would loose a
  fraction of the H\"older exponent that goes beyond the order of
  differentiation. This is somehow reflected in the final relation
  between $\alpha$ and $\gamma$ in the statement of
  Theorem~\ref{t:main}.  \smallskip

  In the statement of Theorem~\ref{t:main}, we make the assumption
  $\alpha \le \frac{2s}{1+2s} \gamma$. We know that the theorem would not
  hold with $\alpha > \gamma$. The range
  $\frac{2s}{1+2s} \gamma  < \alpha \leq \gamma$ is currently unclear.

\subsection{Organization of the article}
The article is organized as follows. In Section~\ref{s:holder},
H\"older spaces adapted to the study of kinetic equations are
introduced. In particular, a kinetic degree of polynomials and
differential operators and a kinetic distance are
defined. Section~\ref{s:integral} is devoted to the study of integral
operators associated with the class $\K$ of elliptic kernels from
Definition~\ref{d:class-of-kernels}. We then state and prove a
Liouville type theorem in Section~\ref{s:liouville}. The final
section~\ref{s:blowup} is devoted to the proof of the main theorem. It
is done by contradiction through a blowup argument.

\section{Kinetic H\"older spaces}
\label{s:holder}

In this preliminary section, we mainly introduce the H\"older spaces
we need to derive the Schauder estimate for kinetic
integro-differential equations. We first define a kinetic distance,
then the kinetic degrees of polynomials and differential
operators. The definition of H\"older spaces is then given and an
interpolation inequality is proved.

\subsection{The kinetic distance}
\label{ss:kd}

The following Lie group structure of $\R^{1+2d}$ plays a key role in all
our computations. The product is defined
as
\[
  (t_1,x_1,v_1) \circ (t_2,x_2,v_2) = (t_1+t_2, x_1 + x_2 + t_2 v_1,  v_1+v_2).
\]
Note that this product is not commutative. The class
equations we will be working with (as in \eqref{e:kinetic-constant})
are left-invariant, in the sense that if $f(z)$ is a solution of \eqref{e:main}, then
$f_0(z) := f(z_0 \circ z)$ is also a solution of a similar equation with
a translated right hand side and a translated kernel in the same ellipticity class.

There is also invariance by scaling. We define
$S_R(t,x,v) = (R^{2s} t, R^{1+2s} x, Rv)$. If $f(z)$ solves an
equation like \eqref{e:main}, then $f(S_R z)$ solves a similar
equation with a scaled right hand side and a scaled kernel in the same ellipticity class.

Because of this property, it is good to work with a notion of
distance, H\"older norms, degree and differential operators, that are
homogeneous respect to this kinetic scaling, and are left invariant by
the action of the Lie group.

%--------------------------------------------------------------
\begin{defn}[A left-invariant distance]\label{d:distance}
Given two points $z_1 = (t_1,x_1,v_1)$ and $z_2 = (t_2, x_2, v_2)$ in $\R^{1+2d}$, we define the following distance function
\[ d_\ell(z_1,z_2) := \min_{w \in \R^d} \left\{ \max \left( |t_1-t_2|^{ \frac 1 {2s} } , |x_1-x_2-(t_1-t_2)w|^{ \frac 1 {1+2s} } , |v_1-w| , |v_2-w| \right) \right\}.\]
\end{defn}

The subindex ``$\ell$'' stands for \textbf{``l''}eft invariant.

It is convenient to also have a notion of ``norm'' with the right scale invariance. We define: %(perhaps we should call it size, or length, or whetever),
\begin{equation} \label{e:norm}
\|(t,x,v)\| = \max \left\{ |t|^{1/(2s)} , |x|^{1/(1+2s)} , |v| \right\}.
\end{equation}
Note that it is \textbf{not} an actual norm in the strictly mathematical sense of the term.
%---------------------------------------------------------------

Here are some observations.
\begin{itemize}
\item The distance $d_\ell$ is left invariant by the Lie group action in the sense that $d_\ell(z \circ z_1, z \circ z_2) = d_\ell(z_1, z_2)$ for any $z,z_1,z_2 \in \R^{1+2d}$.
\item It is homogeneous with respect to scaling: $d_\ell(S_R z_1, S_R z_2) = R d_\ell(z_1,z_2)$.
\item  We will see below in Proposition \ref{p:distance} that $d_\ell$ is indeed a distance when $s \geq 1/2$ in the sense that it satisfies the triangle inequality. When $s<1/2$, the function $d_\ell^{2s}$ is a distance. We will still work with $d_\ell$ (as opposed to $d_\ell^{2s}$) when $s <1/2$ so that we keep a consistent scaling formula (as in the previous bullet point) throughout the paper.
\item There are other equivalent formulas to measure how far apart $z_1$ and $z_2$ are. We observe that
\begin{align*} 
d_\ell (z_1,z_2) &\approx \| z_2^{-1} \circ z_1 \| , \\ 
 &\approx \| z_1^{-1} \circ z_2 \| , \\ 
 &\approx \inf_{w \in \R^d} |t_2-t_1|^{1/(2s)} + |x_1-x_2+(t_1-t_2)w|^{1/(1+2s)} + |v_1-w| + |v_2-w|.
\end{align*}
None of the three formulas on the right hand side are proper distances. However, since they give us a good estimate for $d_\ell(z_1,z_2)$ we will use them whenever it is convenient.
\item Note that the distance $d_\ell$ can be reformulated in the following way: $d_\ell(z_1,z_2)$ is the infimum value of $r > 0$ so that both $z_1$ and $z_2$ belong to a cylinder $Q_r(z)$ for some $z \in \R^{1+2d}$.
\item Our usual definition for the cylinder $Q_r$ would not be affected significantly if we changed it for
\[ Q_r = \{(t,x,v): t \leq 0 \text{ and } d_\ell(0, (t,x,v)) < r\}.\]
Moreover, because of the Lie group invariance, we could also have for $z_0 = (t_0,x_0,v_0)$,
\[ Q_r(z_0) = \{z=(t,x,v): t \leq t_0 \text{ and } d_\ell(z_0, z) < r\}.\]
\end{itemize}

%------------------------------
\begin{prop} \label{p:distance}
The function $d_\ell : \R^{1+2d} \times \R^{1+2d} \to [0,\infty)$ is a distance when $s \geq 1/2$. For $s < 1/2$, the function $d_\ell^{2s}$ is a distance.
\end{prop}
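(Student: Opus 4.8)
The plan is to reduce everything to a statement about the ``norm'' $\|\cdot\|$ defined in \eqref{e:norm}, using the left-invariance listed in the bullet points: since $d_\ell(z_1,z_2)\approx\|z_2^{-1}\circ z_1\|$ and, more precisely, $d_\ell(z_1,z_2)=\min_w\max(\dots)$ is literally realized as an infimum over shifts, the triangle inequality $d_\ell(z_1,z_3)\le d_\ell(z_1,z_2)+d_\ell(z_2,z_3)$ becomes equivalent to a quasi-triangle inequality for $\|\cdot\|$ under the group law, namely $\|z\circ z'\|\le C\bigl(\|z\|+\|z'\|\bigr)$, and we must show $C=1$ when $s\ge 1/2$ (and $C=1$ for $\|\cdot\|^{2s}$ when $s<1/2$). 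First I would check the cheap properties: symmetry $d_\ell(z_1,z_2)=d_\ell(z_2,z_1)$ is immediate from the symmetric role of $v_1,v_2$ in the $\max$ and the substitution $w\mapsto w$, and positivity/non-degeneracy ($d_\ell(z_1,z_2)=0\iff z_1=z_2$) follows because the minimizing $w$ must then equal both $v_1$ and $v_2$, forcing $v_1=v_2$, then $t_1=t_2$, then $x_1=x_2$.

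For the triangle inequality I would argue directly with the optimal (or near-optimal) shifts. Given $z_1,z_2,z_3$, pick $w_{12}$ nearly optimal for the pair $(z_1,z_2)$ and $w_{23}$ nearly optimal for $(z_2,z_3)$, with $d_\ell(z_1,z_2)=:a$ and $d_\ell(z_2,z_3)=:b$. I want to produce a single $w$ with $\max\bigl(|t_1-t_3|^{1/(2s)},|x_1-x_3-(t_1-t_3)w|^{1/(1+2s)},|v_1-w|,|v_3-w|\bigr)\le a+b$. The time term is clear: $|t_1-t_3|^{1/(2s)}\le(|t_1-t_2|+|t_2-t_3|)^{1/(2s)}\le a+b$ precisely because $r\mapsto r^{1/(2s)}$ is subadditive when $2s\ge 1$, i.e. $s\ge 1/2$ — this is exactly where the hypothesis enters, and for $s<1/2$ one instead uses that $|t_1-t_3|\le|t_1-t_2|+|t_2-t_3|$ directly, which is subadditivity of $d_\ell^{2s}$ in the time slot. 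The velocity terms: set $w:=w_{12}+w_{23}-v_2$ (a natural choice since $|v_2-w_{12}|\le a$ and $|v_2-w_{23}|\le b$), so that $|v_1-w|\le|v_1-w_{12}|+|w_{23}-v_2|\le a+b$ and symmetrically $|v_3-w|\le a+b$. The only genuinely delicate term is the $x$-term: I must bound $|x_1-x_3-(t_1-t_3)w|^{1/(1+2s)}$. Writing $x_1-x_3=(x_1-x_2)+(x_2-x_3)$ and $t_1-t_3=(t_1-t_2)+(t_2-t_3)$, and inserting the known bounds $|x_1-x_2-(t_1-t_2)w_{12}|\le a^{1+2s}$, $|x_2-x_3-(t_2-t_3)w_{23}|\le b^{1+2s}$, the leftover is a cross term of the shape $(t_1-t_2)(w-w_{23})+(t_2-t_3)(w-w_{12})$, each factor of which is controlled by $|t_i-t_j|\le(a+b)^{2s}$ times $|w-w_{12}|$ or $|w-w_{23}|$, and the latter are $\le b$ resp. $\le a$ by the choice of $w$. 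Collecting: the $x$-discrepancy is $\lesssim a^{1+2s}+b^{1+2s}+(a+b)^{2s}\max(a,b)\le(a+b)^{1+2s}$, and taking the $(1+2s)$-th root finishes it.

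The main obstacle is precisely making this last estimate clean with constant exactly $1$ rather than merely $\lesssim$: one has to be a little careful about how the cross terms are distributed and to use $a^{1+2s}+b^{1+2s}+(2s+1)$-type Young/convexity inequalities so that the bound lands inside $(a+b)^{1+2s}$ on the nose. I would handle this by separating the cases $2s\ge 1$ and $2s<1$ and, in the latter case, proving the triangle inequality for $d_\ell^{2s}$ — there the exponents $2s/(1+2s)$ and the subadditivity $(p+q)^\theta\le p^\theta+q^\theta$ for $\theta\le 1$ do all the work and the cross terms become comparatively harmless. An alternative, possibly cleaner, route is to prove the quasi-norm inequality $\|z\circ z'\|\le\|z\|+\|z'\|$ (for $s\ge1/2$) by scaling: by homogeneity under $S_R$ it suffices to treat the case $\|z\|+\|z'\|=1$, i.e. $z,z'$ in a fixed compact set with one of them of norm $\le$ some value, reducing to a continuity/compactness statement; but the direct computation above is more transparent and self-contained, so that is the route I would write up.
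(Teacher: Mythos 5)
Your overall strategy is the paper's: the paper uses left invariance to put the middle point at the origin and then tests the minimum in Definition \ref{d:distance} with the sum $w_1+w_2$ of the two optimal shifts; your choice $w=w_{12}+w_{23}-v_2$ is exactly this after undoing the translation, and you treat the time and velocity slots identically (subadditivity of $r\mapsto r^{1/(2s)}$ for $s\ge 1/2$, plain triangle inequality in $\R^d$).

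The one step you yourself call delicate, the $x$-slot, is however wrong as written. After inserting the two optimal bounds, the leftover is $(t_1-t_2)(w_{12}-w)+(t_2-t_3)(w_{23}-w)$, not $(t_1-t_2)(w-w_{23})+(t_2-t_3)(w-w_{12})$; since $w_{12}-w=v_2-w_{23}$ has norm $\le b$ and $w_{23}-w=v_2-w_{12}$ has norm $\le a$, the time factors must be paired \emph{crosswise}: $|t_1-t_2|\le a^{2s}$ with the factor of size $b$, and $|t_2-t_3|\le b^{2s}$ with the factor of size $a$, giving $a^{2s}b+ab^{2s}$. Your crude bound $|t_i-t_j|\le(a+b)^{2s}$ cannot close the argument with constant $1$: the inequality you display, $a^{1+2s}+b^{1+2s}+(a+b)^{2s}\max(a,b)\le(a+b)^{1+2s}$, already fails at $b=0$ (the left side is then $2a^{1+2s}$). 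With the correct pairing, what you need is exactly
\[
a^{1+2s}+b^{1+2s}+a^{2s}b+ab^{2s}=(a^{2s}+b^{2s})(a+b)\le(a+b)^{1+2s}\qquad (2s\ge 1),
\]
which is precisely the elementary fact \eqref{e:calc-fact} that the paper isolates; the case $s<1/2$, for $d_\ell^{2s}$, runs the same way with the exponent $p=1/(2s)$. You do flag that ``the cross terms must be distributed carefully,'' so the missing content is exactly this pairing together with \eqref{e:calc-fact}; once inserted, your argument coincides with the paper's proof.
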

%------------------------------
\begin{proof}
  We start with the case $s \geq 1/2$. Because of the invariance by
  the Lie group action, we only need to prove the triangle inequality
  when one of the three points is the origin. That is, given any
  $z_1 , z_2 \in \R^{1+2d}$, we must show that
  $d_\ell(z_1,0) + d_\ell(0,z_2) \geq d_\ell(z_1,z_2)$.

Let $w_1, w_2 \in \R^d$ be the points where the minimum in Definition \ref{d:distance} is achieved for $z_1$ and $z_2$ respectively. That is
\begin{align*}
d_\ell(z_1,0) = \max\left( |t_1|^{\frac 1 {2s}} , |x_1 + t_1 w_1|^{\frac 1 {1+2s}} , |v_1-w_1|, |w_1| \right), \\
d_\ell(z_2,0) = \max\left( |t_2|^{\frac 1 {2s}} , |x_2 + t_2 w_2|^{\frac 1 {1+2s}} , |v_2-w_2|, |w_2| \right).
\end{align*}

By definition, $d_\ell(z_1,z_2)$ is the minimum over all choices of $w \in \R^d$, so it is less or equal to the value we get by setting $w=w_1+w_2$.
\begin{align*}
d_\ell(z_1,z_2) &\leq  \max \left( |t_1-t_2|^{ \frac 1 {2s} } , |x_1-x_2 + (t_1-t_2)(w_1+w_2)|^{ \frac 1 {1+2s} } , |v_1-w_1-w_2| , |v_2-w_1-w_2| \right).
\end{align*}
We now analyze every one of the four expressions inside the $\max$.

Clearly, since $1/(2s) \leq 1$, we have
\[ |t_1-t_2|^{ \frac 1 {2s} } \leq |t_1|^{ \frac 1 {2s} } + |t_2|^{ \frac 1 {2s} } \leq d_\ell(z_1,0) + d_\ell(z_2,0).\]
Also, simply by the triangle inequality in $\R^d$,
\[ |v_1-w_1-w_2| \leq |v_1-w_1| + |w_2| \leq d_\ell(z_1,0) + d_\ell(z_2,0).\]
Likewise,
\[ |v_2-w_1-w_2| \leq |v_2-w_2| + |w_1| \leq d_\ell(z_2,0) + d_\ell(z_1,0).\]
The second argument in the max is the only one that requires a nontrivial analysis. We evaluate
\begin{align*} 
 |x_1-x_2-(t_1-t_2)(w_1+w_2)|^{ \frac 1 {1+2s} } &= |(x_1+t_1 w_1)- (x_2 + t_ 2w_2) + t_1 w_2 - t_2 w_1|^{ \frac 1 {1+2s} }, \\
&\leq |d_\ell(z_1,0)^{1+2s} + d_\ell(z_2,0)^{1+2s} \\
& \qquad + d_\ell(z_1,0)^{2s} d_\ell(z_2,0) + d_\ell(z_2,0)^{2s} d_\ell(z_1,0) |^{ \frac 1 {1+2s} }, \\
& \leq d_\ell(z_1,0) + d_\ell(z_2,0).
\end{align*}
The last inequality follows from the following elementary calculus fact. For any $a,b \geq 0$ and $p \geq 1$, the following inequality holds:
\begin{equation} \label{e:calc-fact}
 a^{1+p} + b^{1+p} + a^p b + a b^p \leq (a+b)^{1+p}.
\end{equation}
Clearly, in the last inequality, we applied \eqref{e:calc-fact} with $a= d_\ell(z_1,0)$, $b=d_\ell(z_2,0)$ and $p=2s$.

The proof for the case $s < 1/2$ goes along the same lines, and we conclude the last inequality also applying \eqref{e:calc-fact} with $p=1/(2s)$.
\end{proof}

\subsection{Kinetic degree of polynomials}

We start by defining a modified notion of degree for a polynomial
$p \in \R[t,x,v]$. This special degree, which we will call
\emph{kinetic degree}, matches the scaling of the equation.

In order to compute $\deg_k p$, every exponent of the variable $t$
should count times $2s$, every exponent of the variables $x_i$ counts
times $(1+2s)$ and the exponents of the variables $v_i$ count
normally. More precisely, if $m \in \R[t,x,v]$ is a monomial, we
define its kinetic degree $\deg_k m$ is the number $\kappa$ so that
$m(S_R z) = R^\kappa m(z)$.

A polynomial $p$ is always a finite sum of monomials. In general, we
define the kinetic degree of a polynomial $p = \sum m_j$ as the
maximum of $\deg_k m_j$ for all its monomial terms $m_j$.

Note that the degree of a polynomial $p \in \R[t,x,v]$ can be any
number in the discrete set $\mathbb N + 2s \mathbb N$.

\subsection{H\"older spaces}

We now define a properly scaled version of H\"older spaces.
%---------------------------------------------------------------
\begin{defn}[H\"older spaces] \label{d:holder-space} For any
  $\alpha \in (0,\infty)$, we say that a function $f: D \to \R$ is
  $C_\ell^\alpha$ at a point $z_0 \in \R^{1+2d}$ if there exists a
  polynomial $p \in \R[t,x,v]$ such that $\deg_k p < \alpha$ and for
  any $z \in D$
  \[ |f(z) - p(z)| \leq C d_\ell(z,z_0)^\alpha.\] When this property
  holds at every point $z_0$ in the domain $D$, with a uniform
  constant $C$, we say $f \in C_\ell^\alpha(D)$. The semi-norm
  $[f]_{C_\ell^\alpha(D)}$ is the smallest value of the constant $C$ so
  that the inequality above holds for all $z_0, z \in D$. The norm
  $\|f\|_{C_\ell^\alpha(D)}$ is $[f]_{C_\ell^\alpha(D)}+[f]_{L^\infty(D)}$.
\end{defn}
%---------------------------------------------------------------

\begin{remark}
Note that $d_\ell$, $\deg_k$, and therefore also $C^\alpha_\ell$, depend on $s$ implicitly.
\end{remark}

\begin{remark}
  With the above definition, when
  $\alpha \in \mathbb N + 2s \mathbb N$, the $C_\ell^\alpha$ space
  corresponds to a Lipschitz-type space instead of the classical $C^1$
  space.
\end{remark}

Using the invariance by left translations, we can rephrase the
H\"older regularity of $f$ at $z_0$ in the following way. There exists
a polynomial $p_0$ such that $\deg_k p_0 < \alpha$ and for all
$z \in \R^{1+2d}$ such that $z_0 z \in D$, we have
\begin{equation} \label{e:Holder-ineq}
 |f(z_0 \circ  z) - p_0(z)| \leq C \|z\|^\alpha.
\end{equation}
In this case $p_0(z) = p(z_0 \circ z)$ where $p$ is the polynomial of
Definition \ref{d:holder-space}. If the polynomial $p_0$ is given by
\[
  p_0(t,x,v) = a_0 + a_1 t + a_2 \cdot x + a_3\cdot v + \dots,
\]
it is easy to verify that $a_0 = f(z_0)$,
$a_1 = \partial_t f + v_0 \cdot \nabla_x f$, $a_2 = \nabla_x f$ and
$a_3 = \nabla_v f$.

As it is standard for some proofs of the Schauder estimates (see for
example Section 4 in \cite{gilbarg2015elliptic}), we define the
adimensional H\"older spaces.

\begin{defn}[Adimensional H\"older spaces] \label{d:adimensional-Holder-space}
Given a kinetic cylinder $Q \subset \R^{1+2d}$ and any $\alpha > 0$, we define
\[ [f]_{C_{\ell,\ast}^\alpha(Q)} = \sup_{z \in Q} d_z^\alpha [f]_{C_\ell^\alpha(Q_{d_z}(z))}.\]
where
\begin{equation}\label{e:dz}
  d_z= d_\ell(\partial_p Q,z).
\end{equation}
Naturally, we also define $\|f\|_{C_{\ell,\ast}^\alpha(Q)} := \|f\|_{C_\ell^0(Q)} + [f]_{C_{\ell,\ast}^\alpha(Q)}$.
\end{defn}

Here, we used the notation $\partial_p Q_r(t_0,x_0,v_0) := \partial Q_r(t_0,x_0,v_0) \setminus \{t=t_0\}$.

\subsection{H\"older norms and differential operators}

The differential operators $(\partial_t + v \cdot \nabla_x)$,
$\partial_{x_i}$ and $\partial_{v_i}$ commute with left
translations. They do not commute with each other, and they do not
keep the equation \eqref{e:kinetic-constant} invariant (with
$c=0$). The operators that commute with the equation
\eqref{e:kinetic-constant} are $\partial_t$, $\partial_{x_i}$ and
$\partial_{v_i} + v_i \partial_{x_i}$, which are the ones that commute
with right translations (instead of left translations).

It may be convenient to define the \emph{kinetic degree} of a differential
operator. We say that the kinetic degree of
$\partial_t + v \cdot \nabla_x$ is $2s$, the kinetic degree of
$\partial_{x_i}$ is $(1+2s)$ and the kinetic degree of
$\partial_{v_i}$ is $1$.

%More generally,  the kinetic degree can be defined by scaling. If we set
%\[ f_r(t,x,v) = f(r^{2s} t, r^{1+2s} x, r v),\]
%then the kinetic degree of a differential operator $D$ is the only number $\kappa \geq 0$ so that for any smooth function $f$,
%\[ \lim_{r \to \infty} \frac{D f_r(0)}{r^\kappa} \qquad \text{ exists.} \]
%Note that the right-hand side is actually constant when $D=\partial_t + v \cdot \nabla_x$, $D=\partial_{x_i}$ or $D = \partial_{v_i}$.

It is convenient to relate the definition of the H\"older spaces
$C_\ell^\alpha$ with these operators. The following (deceivingly simple)
lemma will be used repeatedly.
%--------------------------------------------------------------------------------
\begin{lemma}[Derivatives and kinetic H\"older spaces] \label{l:holder-derivatives}
  Let $D=\partial_t + v \cdot \nabla_x$, $D=\partial_{x_i}$ or
  $D = \partial_{v_i}$. Let $f$ be a $C_\ell^\alpha$ function in a cylinder $Q$ and let $\deg_k D = \kappa$ with $\kappa < \alpha$. Then
  $Df \in C_\ell^{\alpha-\kappa}$ and
\[ [Df]_{C_\ell^{\alpha-\kappa}(Q)} \lesssim [f]_{C_\ell^\alpha(Q)}.\]
\end{lemma}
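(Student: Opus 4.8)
The plan is to reduce the statement to a pointwise estimate at an arbitrary base point and then use the defining inequality of $C_\ell^\alpha$ together with the left-invariance of the operators $D$ and the distance $d_\ell$. First I would fix $z_0 \in Q$ and, using the reformulation \eqref{e:Holder-ineq}, choose a polynomial $p_0$ with $\deg_k p_0 < \alpha$ such that $|f(z_0\circ z) - p_0(z)| \le [f]_{C_\ell^\alpha(Q)}\, \|z\|^\alpha$ for all admissible $z$. Since $D$ commutes with left translations (it is one of $\partial_t + v\cdot\nabla_x$, $\partial_{x_i}$, $\partial_{v_i}$), the function $g(z) := (Df)(z_0 \circ z)$ equals $D_z\big(f(z_0\circ z)\big)$, so one is led to compare $g$ with $q_0 := D p_0$. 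The candidate polynomial for the $C_\ell^{\alpha-\kappa}$ expansion of $Df$ at $z_0$ is exactly $q_0 = D p_0$; note $\deg_k q_0 \le \deg_k p_0 - \kappa < \alpha - \kappa$, so it has the admissible degree.

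The key step is then to prove the pointwise bound $|(Df)(z_0\circ z) - q_0(z)| \lesssim [f]_{C_\ell^\alpha(Q)} \|z\|^{\alpha - \kappa}$. Writing $h := f(z_0\circ \cdot) - p_0$, we know $|h(z)| \le [f]_{C_\ell^\alpha(Q)}\|z\|^\alpha$ and we must estimate $|Dh(z)|$. For this I would use a scaling/rescaling argument: fix $z$ with $\|z\| = \rho$, set $r = \rho/2$, and apply $S_r^{-1}$ to zoom in near $z$. More precisely, for $D = \partial_{v_i}$ one estimates the difference quotient $\frac{h(z \circ (0,0,\tau e_i)) - h(z)}{\tau}$; since $\|(0,0,\tau e_i)\| = |\tau|$ and $h$ vanishes to order $\alpha$ at the origin, one gets on the scale $\tau \sim \rho$ a bound $\rho^\alpha/\rho = \rho^{\alpha-1} = \rho^{\alpha - \kappa}$. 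The same works for $D = \partial_{x_i}$ with increment $(0,\tau e_i, 0)$, $\|(0,\tau e_i,0)\| = |\tau|^{1/(1+2s)}$, giving $\rho^\alpha / \rho^{1+2s} \cdot$ (correct power bookkeeping) $= \rho^{\alpha-(1+2s)}$; and for $D = \partial_t + v\cdot\nabla_x$ one uses the increment $(\tau, 0, 0)$ which under the left translation by $z$ becomes a shift along the integral curve of $\partial_t + v\cdot\nabla_x$, with $\|(\tau,0,0)\| = |\tau|^{1/(2s)}$, giving $\rho^{\alpha - 2s}$. Because $h$ is only assumed $C_\ell^\alpha$ and not necessarily differentiable, the rigorous version replaces derivatives by finite differences and invokes that $Df$ exists classically (as hypothesized in the ambient setting where this lemma is applied), or alternatively one runs the standard argument showing that control of incremental quotients of all orders below $\alpha$ forces the expansion — here only the first-order increment in the relevant direction is needed since $\alpha - \kappa$ may itself exceed $1$ only in the other directions, but the polynomial $q_0$ already absorbs the lower-order terms.

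Finally, I would assemble these estimates uniformly in $z_0 \in Q$: since the constant $[f]_{C_\ell^\alpha(Q)}$ is the same at every base point and $d_\ell$ is left-invariant, the bound $|(Df)(z) - q_{z_0}(z')| \lesssim [f]_{C_\ell^\alpha(Q)}\, d_\ell(z,z_0)^{\alpha-\kappa}$ holds with a uniform constant, which is precisely $Df \in C_\ell^{\alpha-\kappa}(Q)$ with $[Df]_{C_\ell^{\alpha-\kappa}(Q)} \lesssim [f]_{C_\ell^\alpha(Q)}$.

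\textbf{Main obstacle.} The delicate point is the bookkeeping for $D = \partial_t + v\cdot\nabla_x$ and $D = \partial_{x_i}$: one must verify that left-translating the increment interacts correctly with the non-commutative group law, so that shifting by $(\tau,0,0)$ or $(0,\tau e_i,0)$ after the base translation $z_0 \circ z$ genuinely realizes the directional derivative $D$, and that the resulting point still lies in $Q$ (up to shrinking, which is harmless since the lemma is stated for a fixed cylinder and the estimate is local). Matching the powers of $\rho$ so that each case yields exactly the exponent $\alpha - \kappa$ with $\kappa = \deg_k D$ is the computational heart, and is what makes the lemma ``deceivingly simple.''
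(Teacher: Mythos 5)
There is a genuine gap at the heart of your plan: the passage from the size bound $|h(z)| \le [f]_{C_\ell^\alpha}\|z\|^\alpha$ for $h := f(z_0\circ\cdot)-p_0$ to a pointwise bound on $Dh$. A finite difference at scale $\tau\sim\rho$ only controls the \emph{increment} $\bigl(h(z\circ\xi_\tau)-h(z)\bigr)/\tau$, not the derivative $Dh(z)$ itself; as $\tau\to 0$ your bound $\rho^\alpha/\tau$ blows up, and a function that is small in sup norm on $Q_\rho$ can have arbitrarily large derivatives there (think of a small-amplitude, high-frequency oscillation). So nothing in the data you use — the expansion of $f$ at the single point $z_0$ plus the values of $f$ — can yield $|Dh(z)|\lesssim \rho^{\alpha-\kappa}$. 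Your fallback remarks do not close this: the lemma does not hypothesize that $Df$ "exists classically" (its existence is part of the content, coming from $\kappa<\alpha$), and "control of first-order increments forces the expansion" is false at the pointwise-derivative level for exactly the reason above. What is missing is the use of the $C_\ell^\alpha$ regularity of $f$ \emph{at the nearby point} $z_1=z_0\circ z$: the first-order coefficients of the expansion $q_1$ of $f$ at $z_1$ are precisely $(\partial_t+v_1\cdot\nabla_x)f(z_1)$, $\nabla_x f(z_1)$, $\nabla_v f(z_1)$, and it is through them that $Df(z_1)$ enters.

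This is exactly how the paper argues: for two points $z_0,z_1\in Q$ with $r=d_\ell(z_0,z_1)$, one compares the two expansions on the scale $r$, obtaining $|q_1(z)-q_0(z_0^{-1}\circ z_1\circ z)|\lesssim [f]_{C_\ell^\alpha}\, r^\alpha$ for $\|z\|\le r$, and then invokes the equivalence of norms on the finite-dimensional space of polynomials of bounded kinetic degree (Lemma \ref{l:polynomial-norm}, after scaling) to convert this sup bound into coefficient-wise bounds $|b_j-a_j(z_1)|\lesssim [f]_{C_\ell^\alpha}\, r^{\alpha-k_j}$. Identifying $b_j=Df(z_1)$ and observing that $a_j(z_1)$, as a function of $z_1$, is a polynomial of kinetic degree $<\alpha-\kappa$ (this is where the noncommutative group law is handled, rather than by taking $Dp_0$ directly), one reads off the $C_\ell^{\alpha-\kappa}$ expansion of $Df$ at $z_0$. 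Your choice of candidate polynomial and your exponent bookkeeping per direction are consistent with this, but without the two-point comparison and the polynomial norm-equivalence step the key estimate is unproven; repairing your argument essentially forces you into the paper's proof.
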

%----------------------------------------------------------------------------------
Before proving Lemma \ref{l:holder-derivatives}, we prove the following auxiliary lemma about polynomials.
\begin{lemma} \label{l:polynomial-norm}
Let $p(z)$ be a polynomial in $t$, $x$ and $v$ of kinetic degree $k$. Let us write $p$ as
\[ p(t,x,v) = \sum_{j \in \mathbb N^{1+2d}}  a_j m_j(z),\]
where
\[ m_j(z) := t^{j_0} x_1^{j_1} \dots x_d^{j_d} v_1^{j_{d+1}} \dots v_d^{j_{2d}}.\]
Assume that
\[ \sup_{\|z\| \leq r} |p(z)| \leq C_0 r^\alpha.\]
Then, for each $j$, we have
\[ |a_j| \leq C C_0 r^{\alpha - \deg_k m_j},\]
where the constant $C$ depends on $\deg_k p$ and dimension only.
\end{lemma}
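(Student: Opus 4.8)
The plan is to prove Lemma \ref{l:polynomial-norm} by a rescaling argument that reduces to the case $r=1$, followed by a compactness (or direct linear-algebra) argument on the finite-dimensional space of polynomials of bounded kinetic degree. First I would exploit the scaling invariance: set $q(z) := p(S_r z)$, so that $q$ is again a polynomial, and $\sup_{\|z\|\le 1} |q(z)| = \sup_{\|z\|\le r}|p(z)| \le C_0 r^\alpha$ by the definition \eqref{e:norm} of $\|\cdot\|$ (note $\|S_r z\| = r\|z\|$). Writing $q(z) = \sum_j b_j m_j(z)$, the homogeneity $m_j(S_r z) = r^{\deg_k m_j} m_j(z)$ gives $b_j = a_j r^{\deg_k m_j}$, so the claimed bound $|a_j| \le C C_0 r^{\alpha - \deg_k m_j}$ is equivalent to $|b_j| \le C C_0 r^\alpha$, i.e. it suffices to prove: if $\sup_{\|z\|\le 1}|q(z)| \le M$ then $|b_j| \le C M$ for every $j$, with $C$ depending only on $\deg_k p$ and $d$.

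Next I would prove this reduced statement. The set of monomials $m_j$ with $\deg_k m_j \le k$ is finite — there are only finitely many multi-indices $j \in \mathbb N^{1+2d}$ with $2s\, j_0 + (1+2s)\sum_{i=1}^d j_i + \sum_{i=1}^d j_{d+i} \le k$ — so $q$ ranges over a finite-dimensional vector space $V$ of polynomials. On $V$, the map $q \mapsto \sup_{\|z\|\le 1}|q(z)|$ is a norm (it is clearly a seminorm, and it vanishes only for $q \equiv 0$ since a polynomial vanishing on the open set $\{\|z\|<1\}$ is identically zero), and $q \mapsto \max_j |b_j|$ is another norm on $V$ (coefficients are a linear coordinate system). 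Since all norms on a finite-dimensional space are equivalent, there is a constant $C = C(k,d)$ with $\max_j |b_j| \le C \sup_{\|z\|\le 1}|q(z)|$, which is exactly what is needed. Alternatively, and more explicitly, one can pick a fixed finite set of points $z_1,\dots,z_N$ in $\{\|z\|\le 1\}$ at which the monomials $m_j$ are linearly independent (a unisolvent set), invert the resulting Vandermonde-type matrix, and read off $|b_j| \le C \max_i |q(z_i)| \le C M$; the matrix and hence $C$ depend only on the monomials involved, i.e. on $k$ and $d$.

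The main (very minor) obstacle is bookkeeping: one must make sure that the hypothesis bounds $|p|$ on the \emph{quasi-ball} $\{\|z\| \le r\}$ rather than a Euclidean ball, and that this quasi-ball is symmetric under $S_r$ in the right way, i.e. $\{\|z\|\le r\} = S_r(\{\|z\|\le 1\})$; this is immediate from $\|S_r z\| = r\|z\|$, which in turn is immediate from \eqref{e:norm}. One should also note that the hypothesis only controls $p$ on the region $\|z\| \le r$ but says nothing about the exponent $\alpha$ versus $k$; the lemma is stated for arbitrary $\alpha>0$ and the conclusion is simply vacuous/weak when $\alpha < \deg_k m_j$ for some $j$ (the right-hand side blows up as $r \to 0$), so no case distinction is needed. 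With these remarks the proof is essentially the two-line scaling reduction plus the standard finite-dimensional norm-equivalence fact.
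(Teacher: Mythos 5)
Your proposal is correct and follows essentially the same route as the paper: reduce to the case $r=1$ by the scaling $S_r$ (tracking how each monomial coefficient transforms), then invoke equivalence of the sup-norm on $\{\|z\|\le 1\}$ and the max-of-coefficients norm on the finite-dimensional space of polynomials of bounded kinetic degree. Your write-up just fills in the scaling bookkeeping and the positivity of the sup-seminorm in more detail than the paper does.
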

\begin{proof}
We observe that once we establish this lemma for $r=1$, the other values of $r$ follow by scaling.

The space of polynomials of kinetic degree $k$ in $\R^{1+2d}$ is finite dimensional. Recall that all norms are equivalent in spaces of finite dimension. The result for $r=1$ follows easily by comparing the two norms given by
\begin{equation}
  \label{e:p-norm}
  \|p\|_1 = \sup_j |a_j| \qquad \text{ and } \qquad \|p\|_2 = \sup_{\|z\| \leq 1} |p(z)|.
\end{equation}
This concludes the proof of the technical lemma. 
\end{proof}

\begin{proof}[Proof of Lemma \ref{l:holder-derivatives}]
  Let $z_0$ and $z_1$ be two points in $Q$. Since $f \in C_\ell^\alpha(Q)$,
  there exist polynomials $q_0$ and $q_1$ of degree less than $\alpha$
  so that for all $z$ so that $z_0 \circ z \in Q$ and
  $z_1 \circ z \in Q$,
\begin{align*} 
 |f(z_0 \circ  z) - q_0(z)| &\lesssim C \|z\|^\alpha, \\
 |f(z_1 \circ  z) - q_1(z)| &\lesssim C \|z\|^\alpha
\end{align*}
where $C = [f]_{C_\ell^\alpha}$. We used the fact that $d_\ell$ is left invariant and $d_\ell (z,0) \le \|z\|$.
Let $r = d_\ell(z_0,z_1) \approx \|z_0^{-1} \circ z_1\|$ and let us pick any $z$ so that $\|z\| < r$ \footnote{It is slightly problematic when $z_0 \circ z$ or $z_1 \circ z$ fall outside the domain $Q$. It can be handled like in the proof of Proposition \ref{p:adimensional-interpolation} by using the equivalent norm in the space of polynomials of degree $< \alpha$ that considers only the points that fall inside the domain.}. From the triangle inequality (modified with the power $2s$ when $s<1/2$), we have $\|z_0^{-1} \circ z_1 \circ  z\| \approx d_\ell(z_0,z_1\circ z) \leq d_\ell(z_0,z_1) + d_\ell(z_1,z_1\circ z) \lesssim r$. We apply the two inequalities above for $z_0^{-1} \circ z_1 \circ z$ and $z$ respectively to obtain
\begin{align*} 
 |f(z_1 \circ  z) - q_0(z_0^{-1} \circ z_1 \circ z)| &\lesssim C r^\alpha, \\
 |f(z_1 \circ  z) - q_1(z)| &\lesssim C r^\alpha.
\end{align*}
Therefore, for all $\|z\| \leq r$, 
\[ |q_1(z) - q_0(z_0^{-1} \circ z_1 \circ z)| \lesssim C r^\alpha.\]
 Let us write the coefficients of both polynomials $q_1(z)$ and $q_0(z_0^{-1} \circ z_1 \circ z)$.
\begin{align*} 
  q_0(z_0^{-1} \circ z_1 \circ  z) &= a_0(z_1) + a_1(z_1) t
                                     + a_2(z_1) x + a_3(z_1) v + \dots,  \\
q_1(z) &= b_0 + b_1 t + b_2 x + b_3 v + \dots
\end{align*}
The coefficients $a_j(z_1)$ can be computed in terms of the coefficients of $q_0$ and the value of $z_0^{-1} \circ z_1$. It is not hard to see that each coefficient is a polynomial in $z_0^{-1} \circ z_1$ and thus of  $z_1$ whose degree is not larger than $\deg_k q_0$ minus the degree of the corresponding monomial. Applying Lemma \ref{l:polynomial-norm}, we see that
\[ | b_j - a_j(z_1) | \leq C r^{\alpha - k_j}.\] Here
$k_1 = \deg_k t = \deg_k (\partial_t+ v \cdot \nabla_x) = 2s$,
$k_2 = \deg_k x =\deg_k \partial_{x_i} = 1+2s$ and $k_3= \deg_k v =\deg_k \partial_{v_i} = 1$, i.e. $k_j = \kappa$ in short. Since
$b_1 = (\partial_t+v_1 \nabla_x) f(z_1)$, $b_2 = \nabla_x f(z_1)$ and
$b_3 = \nabla_v f(z_1)$, i.e. $b_j = Df(z_1)$ in short, we have
\[ \sup \{z_1  | Df (z_1) - a_j(z_1) | : d_\ell (z_0,z_1) < r\} \leq C r^{\alpha - \kappa}.\]
This finishes the proof. 
\end{proof}
\begin{remark}
  Note we can also apply Lemma \ref{l:polynomial-norm} to the other coefficients of the polynomial $q_1(z)$. If $p_z$ is the polynomial expansion of $f$ at $z$ of kinetic degree less than $\alpha$ and it has the form
\[ p_z(t,x,v) = \sum_j a_j(z) t^{-j_0} x_1^{j_1} \dots x_d^{j_d} v_1^{j_{d+1}} \dots v_d^{j_{2d}},\]
then, by a direct computation, the coefficients $a_j(z)$ correspond to
\[ a_j(z) = \frac{(\partial_t + v \cdot \nabla_x)^{j_0} \partial_{x_1}^{j_1} \dots \partial_{x_d}^{j_d} \partial_{v_1}^{j_{d+1}} \dots \partial_{v_d}^{j_{2d}} f(z)}{k_0! \dots k_{2d}!}.\]
Note that $(\partial_t+v \cdot \nabla_x)$ and $\partial_v$ do not commute.
 \end{remark}

\subsection{Interpolation inequalities}

The usual interpolation estimates for H\"older spaces will hold.
\begin{prop}[Interpolation] \label{p:adimensional-interpolation}
Given $\alpha_1 < \alpha_2 < \alpha_3$ so that $\alpha_2 = \theta \alpha_1 + (1-\theta) \alpha_3$, then for any function $f \in C_\ell^{\alpha_3}(Q_1)$,
\[ [f]_{C_\ell^{\alpha_2}(Q_1)} \leq [f]_{C_\ell^{\alpha_1}(Q_1)}^\theta [f]_{C_\ell^{\alpha_3}(Q_1)}^{1-\theta} + [f]_{C_\ell^{\alpha_1}(Q_1)}.\]
Also, for any function $f \in C_{\ell,\ast}^{\alpha_3}(Q_1)$,
\[ [f]_{C_{\ell,\ast}^{\alpha_2}(Q_1)} \leq [f]_{C_{\ell,\ast}^{\alpha_1}(Q_1)}^\theta [f]_{C_{\ell,\ast}^{\alpha_3}(Q_1)}^{1-\theta} + [f]_{C_{\ell,\ast}^{\alpha_1} (Q_1)}.\]
\end{prop}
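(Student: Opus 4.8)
\textbf{Proof proposal for Proposition~\ref{p:adimensional-interpolation}.}

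The plan is to reduce both statements to the standard one-dimensional interpolation fact for a single increment, applied to the polynomial expansions that define the $C_\ell^\alpha$ seminorms. I will first treat the ordinary seminorm $[\cdot]_{C_\ell^{\alpha_2}(Q_1)}$ and then deduce the adimensional version by a scaling-and-covering argument. Fix $z_0 \in Q_1$ and let $p_0$ be the polynomial of kinetic degree $<\alpha_1$ that realizes (up to the $C_\ell^{\alpha_1}$ seminorm) the expansion of $f$ at $z_0$; since $\alpha_1 < \alpha_2 < \alpha_3$, this same $p_0$ is an admissible test polynomial for all three exponents. For a point $z$ with $r := d_\ell(z,z_0)$, the $C_\ell^{\alpha_1}$ and $C_\ell^{\alpha_3}$ bounds give $|f(z)-p_0(z)| \le [f]_{C_\ell^{\alpha_1}} r^{\alpha_1}$ and, using the higher-order polynomial $p_0^{(3)}$ of degree $<\alpha_3$, $|f(z)-p_0^{(3)}(z)| \le [f]_{C_\ell^{\alpha_3}} r^{\alpha_3}$; the difference $q := p_0^{(3)} - p_0$ is then a polynomial (of kinetic degree $<\alpha_3$) controlled by $\min\bigl([f]_{C_\ell^{\alpha_1}} r^{\alpha_1} + |f(z)-p_0(z)|,\ \dots\bigr)$ on the ball $\|z_0^{-1}\circ z\|\le r$. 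Applying Lemma~\ref{l:polynomial-norm} to $q$ on that ball, its coefficients $a_j$ obey $|a_j| \lesssim C_0 r^{\alpha-\deg_k m_j}$ with $C_0$ the relevant sup; splitting $q$ into the monomials of kinetic degree $<\alpha_2$ (which we keep in the test polynomial $p_0^{(2)} := p_0 + (\text{low-degree part of }q)$) and those of degree in $[\alpha_2,\alpha_3)$, the low-degree part is absorbed and the high-degree monomials contribute terms $\lesssim (\text{coeff})\cdot\|z\|^{\deg_k m_j}$; balancing the two competing bounds on the coefficients at the scale $r$ via the elementary inequality $\min(A r^{\alpha_1},\, B r^{\alpha_3})\cdot r^{-\alpha_2}$ is maximized when the two are comparable, yielding $\le A^\theta B^{1-\theta}$, produces exactly $|f(z)-p_0^{(2)}(z)| \lesssim \bigl([f]_{C_\ell^{\alpha_1}}^\theta [f]_{C_\ell^{\alpha_3}}^{1-\theta} + [f]_{C_\ell^{\alpha_1}}\bigr) r^{\alpha_2}$.

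For the adimensional seminorms, I would argue pointwise in the defining supremum: for each $z\in Q_1$ with $d_z = d_\ell(\partial_p Q_1, z)$, the cylinder $Q_{d_z}(z)$ is contained in $Q_1$, so applying the already-proven (non-adimensional) interpolation inside $Q_{d_z}(z)$, then multiplying by $d_z^{\alpha_2}$ and writing $d_z^{\alpha_2} = (d_z^{\alpha_1})^\theta (d_z^{\alpha_3})^{1-\theta}$, distributes the weights correctly across the two factors; taking the supremum over $z$ and using $\sup (XY) \le (\sup X)(\sup Y)$ on the product term (together with $d_z^{\alpha_1}[f]_{C_\ell^{\alpha_1}(Q_{d_z}(z))} \le [f]_{C_{\ell,\ast}^{\alpha_1}(Q_1)}$ and likewise for $\alpha_3$) gives the claimed bound with the lower-order additive term handled the same way.

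The main obstacle I anticipate is a bookkeeping one rather than a conceptual one: making the ``balancing'' step clean requires handling the case distinction between scales $r$ where $[f]_{C_\ell^{\alpha_1}} r^{\alpha_1} \le [f]_{C_\ell^{\alpha_3}} r^{\alpha_3}$ and the reverse — in the regime where $r$ is so large (comparable to the diameter of $Q_1$) that no nontrivial balancing is possible, one simply uses the $C_\ell^{\alpha_1}$ bound directly, which is why the additive $[f]_{C_\ell^{\alpha_1}}$ term appears on the right-hand side. A secondary technical point, flagged already in the footnote to the proof of Lemma~\ref{l:holder-derivatives}, is that $z_0 \circ z$ may fall outside $Q_1$; this is dealt with by replacing $\|p\|_2$ in Lemma~\ref{l:polynomial-norm} with the equivalent norm obtained by taking the supremum only over the (still large, up to a dimensional constant) subset of $\{\|z\|\le r\}$ whose image lands in the domain, which remains a genuine norm on the finite-dimensional space of polynomials of bounded kinetic degree. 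Once these two points are organized, the estimate follows by concatenating the elementary scalar inequality with Lemma~\ref{l:polynomial-norm}.
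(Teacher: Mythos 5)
Your proposal is correct and follows essentially the same route as the paper's proof: compare the polynomial expansions of $f$ at the different orders, control the difference polynomial via Lemma~\ref{l:polynomial-norm} (equivalence of norms on the finite-dimensional space of polynomials of bounded kinetic degree, with the same fix for points leaving $Q_1$), optimize the comparison scale to produce the geometric-mean term plus the additive $[f]_{C_\ell^{\alpha_1}}$ term, and deduce the adimensional inequality by scaling on each $Q_{d_z}(z)$. The only organizational difference is that the paper first reduces to the case where $(\mathbb N + 2s \mathbb N)\cap[\alpha_1,\alpha_3)$ contains at most one element, so that the difference of expansions is a single homogeneous piece, whereas you handle all intermediate monomial degrees at once through the coefficient bounds and build the intermediate test polynomial explicitly --- a harmless variation.
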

\begin{remark}\label{r:inter-eps}
  We classically get from the previous estimates that for all $\eps >0$,
  \begin{align*}
    [f]_{C_\ell^{\alpha_2}(Q_1)} &\lesssim C_\eps [f]_{C_\ell^{\alpha_1}(Q_1)}+ \eps [f]_{C_\ell^{\alpha_3}(Q_1)},\\
    [f]_{C_{\ell,\ast}^{\alpha_2}(Q_1)} &\lesssim C_{\ast,\eps} [f]_{C_{\ell,\ast}^{\alpha_1}(Q_1)} + \eps [f]_{C_{\ell,\ast}^{\alpha_3}(Q_1)}.
  \end{align*}
\end{remark}
\begin{proof}
We prove the first interpolation inequality. The second one follows as a consequence by scaling. The statement says precisely that the function $\alpha \mapsto \log [f]_{C_\ell^\alpha}$ is convex. This is a local property, so we only need to prove it for $\alpha_3$ sufficiently close to $\alpha_1$. Because of this, it is enough to prove the interpolation inequality assuming that $(\mathbb N + 2s \mathbb N) \cap [\alpha_1, \alpha_3)$ contains at most one element.

Let $q^1_z$, $q^2_z$ and $q^3_z$ be the polynomial expansions of $f$ at $z$ of kinetic degrees less than $\alpha_1$, $\alpha_2$ and $\alpha_3$ respectively such that for all $z \circ \xi \in Q_1$, 
\begin{equation} \label{e:hh1}
 |f(z \circ \xi) - q^i_z(\xi))| \leq [f]_{C_\ell^{\alpha_i}} \|\xi\|^{\alpha_i} \qquad \text{ for } i =1,2,3.
\end{equation}
The polynomials $q^1_z$, $q^2_z$ and $q^3_z$ are increasingly higher order expansions at the same point $z$. Therefore, $q^2_z$ contains all the terms in $q^1_z$ plus perhaps higher order ones. In the same way, $q^3_z$ contains all the terms of $q^2_z$ plus perhaps higher order ones. Because of our assumption that $(\mathbb N + 2s \mathbb N) \cap [\alpha_1, \alpha_3)$ has at most one element,  which we call $\bar \alpha$, there can be at most one degree of homogeneity in the difference between the polynomials $q_z^1$ and $q_z^3$. The polynomial $q^2_z$ coincides with either $q^1_z$ or $q^3_z$ depending on whether $\bar \alpha \geq \alpha_2$ or $\bar \alpha < \alpha_2$. When $(\mathbb N + 2s \mathbb N) \cap [\alpha_1, \alpha_3) = \emptyset$, we have  $q_z^1 = q^2_z = q^3_z$ that is easier to analyze. Let us consider the case in which there is an $\bar \alpha$ and the polynomials are not equal.

Like in Lemma \ref{l:polynomial-norm}, we write
\[ q^i_z(\xi) = \sum_{|j| < \alpha_i} a_j(z) m_j(\xi).\]
where each $m_j$ is a monomial.

Substracting \eqref{e:hh1} for $i=1,2$, whenever $z \circ \xi \in Q_1$, we have
\begin{equation} \label{e:q3-q1}
 \left \vert q_z^3(\xi) - q_z^1(\xi) \right\vert = \left \vert \sum_{|j| = \bar \alpha} a_j(z) m_j(\xi) \right\vert \leq  [f]_{C_\ell^{\alpha_1}} \|\xi\|^{\alpha_1}  +  [f]_{C_\ell^{\alpha_3}} \|\xi\|^{\alpha_3} .
\end{equation}
From this inequality, we want to infer an estimate for $\|q_z^3 - q_z^1\|$. Let us first make some remarks about the norm of a polynomial. The space of polynomials of kinetic degree less than $\alpha_3$ is finite dimensional. So, all norms that we can write are equivalent. A natural choice is perhaps
\[ \|q\| = \max \{ |q(\xi)| : \|\xi\| \leq 1 \}.\]
If we change that radius $1$ for any other universal constant, we would obtain an equivalent norm. Note that translations of polynomials are also polynomials of the same degree. Therefore, for any two universal constants $c$ and $C$, $z_0 \in \R^{1+2d}$, and $\deg_k q < \alpha_3$, we deduce that
\[ \max \{ |q(\xi)| : \|\xi\| \leq Q_c(z_1) \} \approx \max \{ |q(\xi)| : \|\xi\| \leq Q_C(z_1) \}.\]
The factors in $\approx$ depend naturally on $c$ and $C$.

Coming back to \eqref{e:q3-q1}, for any $N \in (0,1]$ and $z \in Q_1$, let us pick some point $\xi_1 \in Q_1$ such that
\begin{itemize}
\item $\|\xi_1\| < N$,
\item Whenever $d_\ell(\xi_1,\xi)<cN$, then $\|\xi\| \leq N$ and $z \circ \xi \in Q_1$. Here $c$ is a universal constant.
\end{itemize}
It is not hard to see that for any $z \in Q_1$, such $\xi_1$ exists (in fact plenty).

From \eqref{e:q3-q1} we get,
\[ \sup_{ \{\xi : d(\xi_1,\xi) < cN\} } |(q_z^3 - q_z^1) (\xi)| \leq  [f]_{C_\ell^{\alpha_1}} N^{\alpha_1}  +  [f]_{C_\ell^{\alpha_3}} N^{\alpha_3}.\]
Since $q_z^3-q_z^1$ is homogeneous of degree $\bar \alpha$,
\[ \sup_{ \{\xi : d(S_{1/N} \xi_1,\xi) < c\} } |(q_z^3 - q_z^1) (\xi)| \leq  [f]_{C_\ell^{\alpha_1}} N^{\alpha_1-\bar \alpha}  +  [f]_{C_\ell^{\alpha_3}} N^{\alpha_3- \bar \alpha}.\]
Since $\| \xi_1\| \leq N$, then $\|S_{1/N} \xi_1\| \leq 1$. From the triangle inequality (modified with power $2s$ when $s<1/2$), there is a universal constant $C$ so that whenever $\| \xi \| \leq 1$, then $\xi \in d_\ell (S_{1/N} \xi_1, \xi) \leq C$. 

According to the discussion above, the fact that all norms are equivalent in the space of polynomials implies that
\begin{align*} 
\|q_z^3 - q_z^1 \| &=  \sup_{ \{\xi : \| \xi \| \leq 1\} } |(q_z^3 - q_z^1) (\xi)| , \\ 
 &\leq \sup_{ \{\xi : d(S_{1/N} \xi_1,\xi) < C\} } |(q_z^3 - q_z^1) (\xi)|, \\ &\lesssim  [f]_{C_\ell^{\alpha_1}} N^{\alpha_1-\bar \alpha}  +  [f]_{C_\ell^{\alpha_3}} N^{\alpha_3- \bar \alpha}.
\end{align*}

We now optimize for $N \in [0,1]$ and obtain
\[ \|q^3_z - q^1_z\|  \lesssim [f]_{C_\ell^{\alpha_1}}^{\bar \theta} [f]_{C_\ell^{\alpha_3}}^{1-\bar \theta} + [f]_{C_\ell^{\alpha_1}},\]
where $\bar \alpha = \bar \theta \alpha_1 + (1-\bar \theta) \alpha_2$.

Now we estimate $f(z \circ \xi) - q_z^2(\xi)$ using both $[f]_{C_\ell^{\alpha_1}}$ and $[f]_{C_\ell^{\alpha_3}}$. There are two cases depending on whether $\alpha_2 > \bar \alpha$ or $\alpha_2 \leq \bar \alpha$. The proofs are very similar, so let us do only the later. In this case $q^2_z = q^1_z$. We have
\[
|f(z \circ \xi) - q_z^2(\xi)| \leq \begin{cases} [f]_{C_\ell^{\alpha_1}} \|\xi\|^{\alpha_1}, \\
[f]_{C_\ell^{\alpha_3}} \|\xi\|^{\alpha_3} + \left( [f]_{C_\ell^{\alpha_1}}^{\bar \theta} [f]_{C_\ell^{\alpha_3}}^{1-\bar \theta} + [f]_{C_\ell^{\alpha_1}} \right)\|\xi\|^{\bar \alpha}.
\end{cases} \]
One can easily verify that the right hand side is less than $[f]_{C_\ell^{\alpha_1}}^{\theta} [f]_{C_\ell^{\alpha_3}}^{1-\theta} \|\xi\|^{\alpha_2}$ for any value of $\|\xi\|$. Indeed, if $\|\xi\| \geq ([f]_{C_\ell^{\alpha_3}} / [f]_{C_\ell^{\alpha_2}})^{1/(\alpha_3-\alpha_1)}$, we have
\[ [f]_{C_\ell^{\alpha_1}} \|\xi\|^{\alpha_1} \leq [f]_{C_\ell^{\alpha_1}}^{\theta} [f]_{C_\ell^{\alpha_3}}^{1-\theta} \|\xi\|^{\alpha_2}.\]
Otherwise, we have
\[ [f]_{C_\ell^{\alpha_3}} \|\xi\|^{\alpha_3} + \left( [f]_{C_\ell^{\alpha_1}}^{\bar \theta} [f]_{C_\ell^{\alpha_3}}^{1-\bar \theta} + [f]_{C_\ell^{\alpha_1}} \right)\|\xi\|^{\bar \alpha} \lesssim [f]_{C_\ell^{\alpha_1}}^{\theta} [f]_{C_\ell^{\alpha_3}}^{1-\theta} \|\xi\|^{\alpha_2} + [f]_{C_\ell^{\alpha_1}} \|\xi\|^{\bar \alpha} .\]

If $\alpha_2 > \bar \alpha$, we would have $q^2_z = q^3_z$ and the term $|q^3_z-q^1(z)| \leq \left( [f]_{C_\ell^{\alpha_1}}^{\bar \theta} [f]_{C_\ell^{\alpha_3}}^{1-\bar \theta} + [f]_{C_\ell^{\alpha_1}} \right)\|\xi\|^{\bar \alpha}$ would appear on the first line of the inequality. If $(\mathbb N + 2s \mathbb N) \cap [\alpha_1, \alpha_3) = \emptyset$, we have  $q_z^1 = q^2_z = q^3_z$ and the extra term involving $\bar \alpha$ would not be there.
\end{proof}

\subsection{Discussion about choices of distance}

\label{s:discussion}

We use the distance $d_\ell$ which is invariant by left translations
and by the scaling $S_R$. Let us analyze the consequences of this
choice and compare with the other possible choices.

We could define a distance $d_r$ that is invariant by right translations of the Lie group. It would be given by:
\[ d_r(z_1,z_2) := \max \left\{ |t_2-h| + |h-t_1|, |x_1-x_2+h(v_1-v_2)|^{2s/(1+2s)} , |v_1-v_2|^{2s} \right\}^{1/2s}. \]
Moreover, it would be comparable to the following expressions.
\[ d_r(z_1,z_2) \approx \begin{cases}
\| z_2 \circ z_1^{-1} \|, \\
\| z_1 \circ z_2^{-1} \|, \\
\inf_{h \in \R} \left\{ |t_2-h|^{1/(2s)} + |h-t_1|^{1/(2s)} + |x_1-x_2+h(v_1-v_2)|^{1/(1+2s)} + |v_1-v_2| \right\}.
\end{cases} \]

Alternatively, we could ignore the Lie group structure and define a distance $d_s$ that only takes scaling into account.
\[ d_s(z_1,z_2) := \|z_1 - z_2\|.\]
Here $\|.\|$ stands for the scaled norm as in \eqref{e:norm}.

The most brutal choice would be to ignore both the Lie group action and scaling and use the plain Euclidean distance in $\R^{1+2d}$.
\[ d_e(z_1,z_2) := \left( |t_1-t_2|^2 + |x_1-x_2|^2 + |v_1-v_2|^2 \right)^{1/2}.\]

The definition of H\"older spaces (Definition \ref{d:holder-space})
depends on the choice of distance. We can thus consider the four
possible candidates $C_\ell^\alpha$, $C_r^\alpha$, $C_s^\alpha$ and
$C_e^\alpha$. The distances are not equivalent, and these four spaces
are all different. Their only equivalence appears when measuring
distances from the origin
$d_\ell (0,z) \approx d_r(0,z) \approx d_e(0,z)$. Thus
$C^\alpha_\ell(0) = C^\alpha_r(0) = C^\alpha_e(0)$ (by $C^\alpha_\ast(0)$ we mean the functions that are $C^\alpha_\ast$ at the point $0$).

The class of equations \eqref{e:main} is invariant by left
translations. Because of that, the norm $d_\ell$ is the most
appropriate to work with. For example, if we proved an estimate for
solutions of \eqref{e:main} of the sort
$[f]_{C_\ell^\alpha(0)} \lesssim \|f\|_{L^\infty(Q_1)}$, it implies by
simple translations that
$[f]_{C_\ell^\alpha(Q_{1/2}) } \lesssim \|f\|_{L^\infty(Q_1)}$. This
implication does not hold true for $C_r^\alpha(Q_{1/2})$,
$C_s^\alpha(Q_{1/2})$ or $C_e^\alpha(Q_{1/2})$ (at least not true
keeping the same exponent $\alpha$).

In previous works, people have taken more or less attention to these
distinctions. The results in \cite{golse2016harnack} and
\cite{imbert2016weak} are oblivious of the choice of distance. That is
because these results are about an estimate in H\"older spaces for an
undetermined exponent $\alpha>0$. For any pair of points $z_1$ and
$z_2$ in $Q_1$, the following inequality holds
\[ d_1(z_1,z_2) \leq C d_2(z_1,z_2)^{1/(1+2d)},\]
where $d_1$ and $d_2$ are any two choices among $d_\ell$, $d_r$, $d_s$
and $d_e$. Thus, the main theorems in \cite{golse2016harnack} and
\cite{imbert2016weak} hold for the $C_\ell^\alpha$ norm defined in terms of
any of these distances, modulo an adjustments of the constants and
H\"older exponent $\alpha$.

For Schauder estimates, the distinction between different distances
plays a crucial role. In this case we want to obtain an estimate with
the precise exponent $C^{2s+\alpha}$ when the right hand side is
$C^\alpha$. It seems that such an estimate can only be true with the
distance $d_\ell$.

For right-invariant H\"older spaces $C^\alpha_r$ in terms of $d_r$,
the corresponding statement of Lemma \ref{l:holder-derivatives} would
be in terms of the operators $\partial_t$, $\partial_{x_i}$ and
$\partial_{v_i} + v_i \partial_{x_i}$. These differential operators
have the advantage that they commute with the equation
\eqref{e:kinetic-constant}. For regular H\"older spaces $C^\alpha_s$
or $C^\alpha_e$, Lemma \ref{l:holder-derivatives} would of course hold
with pure derivatives $\partial_t$, $\partial_{x_i}$ and
$\partial_{v_i}$.

Our Liouville theorem~\ref{t:liouville} holds for any choice of distance
$d_\ell$, $d_r$ or $d_s$. This is because in the step 1 of the proof
we establish that the function is constant in $x$. After that, we
ignore the $x$ coordinate and the three distances are the same.

In the proof of Lemma \ref{l:holder-exponent-improvement}, we select a
sequence of functions $\tilde f_j$ that are scaled left-translations
of a sequence of solutions $f_j$. If we used a different choice of
distance that is not invariant by left translations, we would not be
able to conclude anything about their $C^\beta$ norms.

One needs to be careful throughout this paper to make sure we do not
implicitly use the exact triangle inequality for $s < 1/2$, we do not commute group
operation $\circ$, and that we do not accidentally apply $d_r$ or
$d_s$ instead of $d_\ell$.

\section{Integral operators}
\label{s:integral}

This section is devoted to the integral coperators
\begin{equation} \label{e:L}
  \LL f(v) = \int_{\R^d} (f(v') - f(v)) K(v'-v) \dd v'  
\end{equation}
associated with fixed kernels $K$ from the
elliptic class $\K$ given in Definition~\ref{d:class-of-kernels}. 
We first explain when these integral operators can be
evaluated pointwise. We then turn to limits of kernels and integral
operators.  We conclude this section by proving H\"older estimates
that will be used in the proof of the Schauder estimate.

\subsection{Evaluating operators pointwise}

In this subsection, we discuss how to evaluate pointwise operators associated with kernels in the elliptic class $\K$. 
 More precisely, we want to explain the conditions
that a function $f: \R^d \to \R$ must meet in order for the integral
in \eqref{e:L} to be well defined at the point $v_0$. On one hand, it
must be sufficiently regular so that the integral does not diverge in
a neighborhood of $v_0$. On the other hand, it must also satisfy some
growth conditions so that the integral does not diverge at
infinity. Let us split the domain of integration accordingly and
analyze conditions for convergence of each part.

\[ \LL f(v_0) = \PV \int_{B_1} (f(v_0+w) - f(v_0)) K(w) \dd w + \int_{\R^d \setminus B_1} (f(v_0+w) - f(v_0)) K(w) \dd w.\]

When $s \geq 1/2$, the first term must be understood in the principal
value sense, even when $f$ is smooth. Using the symmetry condition
$K(w) = K(-w)$, we can symmetrize the integral and remove the
principal value.
\[ \PV \int_{B_1} (f(v_0+w) - f(v_0)) K(w) \dd w = \frac 12 \int_{B_1} (f(v_0+w) + f(v_0 - w) - 2f(v_0)) K(w) \dd w.\]
Because of \eqref{e:K-upper-bound}, this integral is classically computable when $f \in C^{2s+\eps}(B_1(v_0))$ for some $\eps>0$. Indeed,
\begin{align} 
\nonumber  \frac 12 \int_{B_1} |f(v_0+w) + f(v_0 - w) - 2f(v_0)| K(w) \dd w &\lesssim [f]_{C^{2s+\eps}(B_1(v_0))} \left( \int_{B_1} |w|^{2s+\eps} K(w) \dd w \right), \\
\label{e:d2v0}                                                                   &\lesssim \Lambda [f]_{C^{2s+\eps}(B_1 (v_0))}.
\end{align}

In order to analyze the tail of the integral, we introduce the following function
\begin{equation} \label{e:majorant}
 \omega_{v_0}(r) := \sup \{ |f(v)| : v \in B_{2r}(v_0) \setminus B_{r/2}(v_0) \}.
\end{equation}
We observe that, because of \eqref{e:K-upper-bound}, the function $\omega_{v_0}$ can be used to bound the tail of the integral.
We state the estimate in a lemma for later use. 
% --------------------------------------------------
\begin{lemma} \label{l:tail}
Let $R>0$. 
  \[ \int_{\R^d \setminus B_R} f(v_0+w)  K(w) \dd w \lesssim \Lambda \int_{R/2}^\infty \omega_{v_0}(r) r^{-1-2s} \dd r.\]
\end{lemma}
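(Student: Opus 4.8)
The plan is to dyadically decompose the region $\R^d \setminus B_R$ into annuli and use the upper bound \eqref{e:K-upper-bound} on each annulus. Concretely, I would write
\[
 \R^d \setminus B_R = \bigcup_{k \geq 0} \bigl( B_{2^{k+1}R} \setminus B_{2^k R} \bigr),
\]
so that
\[
 \int_{\R^d \setminus B_R} f(v_0+w) K(w) \dd w = \sum_{k \geq 0} \int_{B_{2^{k+1}R} \setminus B_{2^k R}} f(v_0+w) K(w) \dd w.
\]
On the $k$-th annulus we have $|w| \in [2^k R, 2^{k+1}R]$, hence $v_0 + w \in B_{2^{k+1}R}(v_0) \setminus B_{2^k R}(v_0)$, which is contained in $B_{2 r_k}(v_0) \setminus B_{r_k/2}(v_0)$ for $r_k := 2^k R$ (indeed $2^{k+1}R = 2 r_k$ and $2^k R = r_k \geq r_k/2$). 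Therefore $|f(v_0+w)| \leq \omega_{v_0}(r_k)$ on that annulus.

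The second ingredient is to bound $\int_{B_{2^{k+1}R} \setminus B_{2^k R}} K(w) \dd w$. Since $|w| \geq 2^k R = r_k$ on this annulus, we have $|w|^2 \geq r_k^2$, so by \eqref{e:K-upper-bound} applied with radius $2^{k+1}R$,
\[
 \int_{B_{2^{k+1}R} \setminus B_{2^k R}} K(w) \dd w \leq r_k^{-2} \int_{B_{2^{k+1}R}} |w|^2 K(w) \dd w \leq r_k^{-2} \Lambda (2^{k+1}R)^{2-2s} = 4\Lambda\, r_k^{-2s}.
\]
Combining the two bounds gives
\[
 \int_{\R^d \setminus B_R} f(v_0+w) K(w) \dd w \leq \sum_{k \geq 0} \omega_{v_0}(r_k) \cdot 4\Lambda\, r_k^{-2s},
\]
with $r_k = 2^k R$ geometrically increasing.

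Finally I would recognize the right-hand side as a Riemann-type sum comparable to the claimed integral $\Lambda \int_{R/2}^\infty \omega_{v_0}(r) r^{-1-2s} \dd r$. Since $\omega_{v_0}$ is increasing in the sense that need not be monotone, but the natural comparison is: for $r \in [r_{k-1}, r_k]$ we have $r^{-1-2s} \gtrsim r_k^{-1-2s}$ and the interval has length $r_k - r_{k-1} = r_k/2$, so $\int_{r_{k-1}}^{r_k} r^{-1-2s}\dd r \gtrsim r_k^{-2s}$; and $\omega_{v_0}(r)$ for $r$ in this range controls $|f|$ on a neighborhood overlapping with $B_{2r_k}(v_0)\setminus B_{r_k/2}(v_0)$. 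To make this clean it is easiest to just observe that $\omega_{v_0}(r_k) r_k^{-2s} \lesssim \int_{r_k/2}^{r_k} \omega_{v_0}(2r)\, r^{-1-2s} \dd r$ (using that $\omega_{v_0}(r_k) \le \sup_{r \in [r_k/2,r_k]}\omega_{v_0}(2r)$ trivially, in fact with equality of the argument for $r = r_k/2$), and then summing over $k \ge 0$ yields the integral over $[R/2, \infty)$ up to relabeling $2r \mapsto r$ which only shifts the domain and contributes a harmless constant. The main (and only mild) obstacle is bookkeeping the constants and the exact endpoints in this comparison so that the stated form with lower limit $R/2$ and integrand $\omega_{v_0}(r) r^{-1-2s}$ comes out exactly; none of the steps are deep, and the symmetry of $K$ is not even needed here since we bound $|f(v_0+w)|$ directly rather than a symmetrized difference.
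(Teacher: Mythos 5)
The dyadic decomposition and the kernel-mass bound $\int_{B_{2^{k+1}R}\setminus B_{2^kR}} K(w)\dd w \lesssim \Lambda r_k^{-2s}$ are correct, and up to that point your argument is the discrete version of the paper's proof. The genuine gap is in the final comparison of $\sum_k \omega_{v_0}(r_k)\, r_k^{-2s}$ with the integral. The inequality you invoke, $\omega_{v_0}(r_k)\, r_k^{-2s} \lesssim \int_{r_k/2}^{r_k}\omega_{v_0}(2r)\, r^{-1-2s}\dd r$, i.e.\ after substitution $\omega_{v_0}(r_k)\, r_k^{-2s} \lesssim \int_{r_k}^{2r_k}\omega_{v_0}(\rho)\,\rho^{-1-2s}\dd\rho$, is false in general, because $\omega_{v_0}$ defined in \eqref{e:majorant} is not monotone: if $f$ is supported in a thin shell $\{ |v-v_0| \approx r_k/2\}$, then $\omega_{v_0}(r_k) = \sup |f| > 0$, while $\omega_{v_0}(\rho) = 0$ for every $\rho > r_k$ (the annulus $B_{2\rho}(v_0)\setminus B_{\rho/2}(v_0)$ misses the shell), so the right-hand side vanishes. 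Your justification, that $\omega_{v_0}(r_k) \le \sup_{r\in[r_k/2,r_k]}\omega_{v_0}(2r)$, only controls a supremum; to lower-bound an integral you would need $\omega_{v_0}(2r)\gtrsim\omega_{v_0}(r_k)$ on a set of measure comparable to $r_k$, and that is exactly what can fail.

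The missing idea — and the crux of the paper's argument — is to use that for $|w|\in[r_k,2r_k]$ one has $|f(v_0+w)|\le \omega_{v_0}(\rho)$ for \emph{every} $\rho\in[r_k,2r_k]$ (since $[r_k,2r_k]\subset[\rho/2,2\rho]$), hence $|f(v_0+w)|\le \inf_{\rho\in[r_k,2r_k]}\omega_{v_0}(\rho)\le \frac1{r_k}\int_{r_k}^{2r_k}\omega_{v_0}(\rho)\dd\rho \lesssim r_k^{2s}\int_{r_k}^{2r_k}\omega_{v_0}(\rho)\rho^{-1-2s}\dd\rho$; multiplying by the kernel mass $\lesssim \Lambda r_k^{-2s}$ and summing over the essentially disjoint intervals $[2^kR,2^{k+1}R]$ covering $[R,\infty)$ gives the lemma. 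The paper implements the same infimum-versus-average trick continuously: it bounds $f(v_0+w)$ by $\min\{\omega_{v_0}(r): r\in[|w|/2,2|w|]\}$, replaces the minimum by the average $\frac{2}{3|w|}\int_{|w|/2}^{2|w|}\omega_{v_0}(r)\dd r$, and concludes with Fubini and \eqref{e:K-upper-bound}. (Your sum $\sum_k\omega_{v_0}(r_k) r_k^{-2s}$ does in fact satisfy the desired bound, but proving it requires a separate bounded-overlap argument: a near-maximizer for $\omega_{v_0}(r_k)$ lies at some radius $\rho^*\in[r_k/2,2r_k]$ and forces $\omega_{v_0}(r)\ge \omega_{v_0}(r_k)-\eps$ for all $r\in[\rho^*/2,2\rho^*]$, an interval of length $\gtrsim r_k$ inside $[r_k/4,4r_k]$, with each point covered by boundedly many such intervals. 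As written, your proposal contains neither this argument nor the infimum trick, so the last step does not go through.)
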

% ----------------------------------------------------
\begin{proof}
Using the definition of $\omega_{v_0} (r)$, we can write
  \begin{align*}
    \int_{\R^d \setminus B_R} f(v_0+w)  K(w) \dd w & \le    \int_{\R^d \setminus B_R} \min \{  \omega_{v_0} (r): r \in [|w|/2,2 |w|]\}  K(w) \dd w  \\
                                                   & \le    \int_{\R^d \setminus B_R} \left\{ \frac{2}{3|w|} \int_{|w|/2}^{2|w|}  \omega_{v_0} (r) \dd  r  \right\}  K(w) \dd w  \\
                                                   & \lesssim    \int_{\R^d \setminus B_R} \left\{ \int_{|w|/2}^{2|w|} \frac{ \omega_{v_0} (r)}r \dd  r  \right\}  K(w) \dd w  \\
    & \lesssim \int_{R/2}^{\infty} \left\{ \int_{B_{2r}\setminus B_{r/2}} K (w) \dd w \right\} \frac{ \omega_{v_0} (r)}r  \dd r.
    \end{align*}
Using \eqref{e:K-upper-bound} yields the result. 
\end{proof}

Summarizing, we have the estimate
\begin{equation} \label{e:L-computable}
 \LL f(v_0) \lesssim \Lambda \left( [f]_{C^{2s+\eps}(v_0)} + \int_1^\infty (|f(v_0)| + \omega_{v_0}(r)) r^{-1-2s} \dd t \right).
\end{equation}
Moreover, the integral expression in \eqref{e:L} is classically computable whenever the right hand side of the inequality is finite.

\subsection{Weak limits of kernels}

We now discuss how to pass to the limit in kernels. We first define
the notion of weak-$\ast$ convergence and we then prove that the set
$\K$ is compact for the corresponding topology.

%-------------------------------------------
\begin{defn}[Weak-$\ast$ convergence of
  kernels] \label{d:kernel-weak-limit} We say that a sequence $K_j$ of
  Radon measures in $\R^d \setminus \{0\}$ converges weakly-$\ast$ to
  the Radon measure $K_\infty$ if for any continuous function
  $\varphi : \R^d \to \R$, compactly supported, whose support does not
  include the origin, we have
\[ \lim_{j \to \infty} \int_{\R^d} \varphi(w) K_j(w) \dd w = \int_{\R^d} \varphi(w) K_\infty(w) \dd w.\]
\end{defn}
%----------------------------------------------
%---------------------------------------
\begin{lemma}[Closedness of $\K$ under weak-$\ast$
  limit] \label{l:weak-limit-stay-in-class} If the kernels $K_j$
  belong to the class $\K$ of Definition \ref{d:class-of-kernels} and
  $K_j$ converges weakly-$\ast$ to $K_\infty$, then $K_\infty$ also
  belongs to the class $\K$.
\end{lemma}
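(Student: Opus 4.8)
The plan is to verify, one by one, the four conditions defining $\K$ for the weak-$\ast$ limit $K_\infty$, using that Definition \ref{d:kernel-weak-limit} is precisely vague convergence on the locally compact space $\R^d\setminus\{0\}$: integrals of nonnegative lower semicontinuous functions are lower semicontinuous under the limit, and integrals of nonnegative upper semicontinuous functions with compact support in $\R^d\setminus\{0\}$ are no larger in the limit. \emph{Symmetry} is immediate, since for $\varphi\in C_c(\R^d\setminus\{0\})$ the reflected function $w\mapsto\varphi(-w)$ is admissible and $\int\varphi(w)K_\infty(w)\dd w=\lim_j\int\varphi(-w)K_j(w)\dd w=\int\varphi(-w)K_\infty(w)\dd w$, whence $K_\infty(w)=K_\infty(-w)$. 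For the \emph{upper bound}, the integrand $|w|^2\one_{B_r}(w)$ is nonnegative and lower semicontinuous on $\R^d\setminus\{0\}$, so $\int_{B_r}|w|^2K_\infty(w)\dd w\le\liminf_j\int_{B_r}|w|^2K_j(w)\dd w\le\Lambda r^{2-2s}$.

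The heart of the matter is the \emph{coercivity estimate}. Fix $R>0$ and a test function $\varphi$; it is enough to treat $\varphi\in C^2(\overline{B_R})$, which is the only case used later, the general case of \eqref{e:K-coercivity} following by routine approximation. Substituting $w=v'-v$ and integrating first in $v$, the left-hand side of \eqref{e:K-coercivity} becomes $\int_{\R^d}\Phi_\varphi(w)K(w)\dd w$ with
\[
 \Phi_\varphi(w):=\int_{ \{ v\in B_R \,:\, v+w\in B_R \} }|\varphi(v)-\varphi(v+w)|^2\dd v,
\]
and $\Phi_\varphi$ is continuous, supported in $\overline{B_{2R}}$, with $0\le\Phi_\varphi(w)\lesssim\|\nabla\varphi\|_{L^\infty(B_R)}^2|w|^2$. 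Since $\Phi_\varphi$ does not vanish near the origin it is not itself an admissible test function, so I would cut it at scale $\delta$: write $\Phi_\varphi=\Phi_\varphi\eta_\delta+\Phi_\varphi(1-\eta_\delta)$ with $\eta_\delta\in C_c(\R^d\setminus\{0\})$, $0\le\eta_\delta\le1$, $\eta_\delta=0$ on $B_\delta$ and $\eta_\delta=1$ off $B_{2\delta}$. The term $\Phi_\varphi\eta_\delta$ lies in $C_c(\R^d\setminus\{0\})$, so its integral against $K_j$ converges to its integral against $K_\infty$; the term $\Phi_\varphi(1-\eta_\delta)$ has integral at most $C_\varphi\int_{B_{2\delta}}|w|^2K(w)\dd w\lesssim C_\varphi\Lambda\delta^{2-2s}$, which by \eqref{e:K-upper-bound} for the $K_j$ and by the upper bound just proved for $K_\infty$ is uniform in $j$. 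Letting $j\to\infty$ and then $\delta\to0$ gives $\int_{\R^d}\Phi_\varphi K_\infty\dd w=\lim_j\int_{\R^d}\Phi_\varphi K_j\dd w$; since the right-hand side of \eqref{e:K-coercivity} does not involve the kernel, the inequality is inherited by $K_\infty$.

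When $s<1/2$ one must also check the \emph{non-degeneracy} condition \eqref{e:K-nondeg}; here the limit only gives the inequality in the unhelpful direction on the open ball, so I would work on a closed annulus avoiding the origin. Fix $|e|=1$ and $0<\delta<\rho<r$. The function $(w\cdot e)_+^2\one_{\overline{B_\rho}\setminus B_\delta}$ is nonnegative, upper semicontinuous and compactly supported in $\R^d\setminus\{0\}$, and since $\overline{B_\rho}\setminus B_\delta\subset B_r$ we get $\int_{B_r}(w\cdot e)_+^2K_\infty(w)\dd w\ge\limsup_j\int_{\overline{B_\rho}\setminus B_\delta}(w\cdot e)_+^2K_j(w)\dd w$, while \eqref{e:K-nondeg} and \eqref{e:K-upper-bound} for $K_j$ give $\int_{\overline{B_\rho}\setminus B_\delta}(w\cdot e)_+^2K_j\dd w\ge\int_{B_\rho}(w\cdot e)_+^2K_j\dd w-\int_{B_\delta}|w|^2K_j\dd w\ge\lambda\rho^{2-2s}-\Lambda\delta^{2-2s}$. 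Letting $\delta\to0$, then $\rho\to r$, and taking the infimum over $|e|=1$ yields \eqref{e:K-nondeg} for $K_\infty$.

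The step I expect to be the real obstacle is the coercivity one: $\Phi_\varphi$ charges a whole neighbourhood of the origin and so falls outside the test-function class defining weak-$\ast$ convergence, and the argument succeeds only because of the quadratic vanishing $\Phi_\varphi(w)=O(|w|^2)$ at $w=0$ combined with $2s<2$, which render the near-origin contribution uniformly negligible via the upper bound \eqref{e:K-upper-bound} (note in particular that the upper bound for $K_\infty$ must be established before the coercivity step). A smaller, related difficulty, already handled above, is that one-sided semicontinuity forces the non-degeneracy argument onto a closed, origin-avoiding annulus, the discarded inner part being absorbed through the same upper bound.
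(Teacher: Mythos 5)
Your proof is correct and follows essentially the same route as the paper for the two substantive conditions: cut off a neighborhood of the origin, control the discarded part uniformly by $O(r^{2-2s})$ via the upper bound \eqref{e:K-upper-bound}, pass to the weak-$\ast$ limit in the remaining compactly supported (origin-avoiding) test function, and then remove the cutoff — your function $\Phi_\varphi$ just makes explicit the Fubini reduction that the paper uses implicitly when it tests the double integral with the weight $(1-\Psi_r)(v'-v)$. In addition, you verify the symmetry condition and the non-degeneracy condition \eqref{e:K-nondeg} for $s<1/2$, which the paper's proof does not spell out, so your argument is if anything slightly more complete.
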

% -------------
\begin{proof}
  The fact that $K_\infty$ is a non-negative Radon measure is
  classical.

  As far as the upper bound is concerned, it is enough to consider a
  cut-off function $\varphi_r$ valued in $[0,1]$ with $\varphi_r \equiv 1$
  in $B_r \setminus B_\eta$ and whose compact support is contained in
  $B_{r+\eps} \setminus B_{\eta/2}$ for some $\eps, \eta >0$. Then we
  write
  \[ \int_{\R^d} |w|^2 \varphi_r (w) K_j (w) \dd w \le \Lambda (r+\eps)^{2-2s}. \]
  Passing to the limit as $j\to +\infty$, we get
  \[ \int_{B_r \setminus B_\eta}
    |w|^2  K_\infty (w) \dd w \le
    \int_{\R^d} |w|^2 \varphi_r (w) K_j (w) \dd w \le \Lambda (r+\eps)^{2-2s}. \]
  Since $\eps$ and $\eta$ are arbitrary, $K_\infty$ satisfies the upper bound.

  As far as the coercivity estimate is concerned, let $R>0$ and
  $\varphi \in C^2(B_R)$. Since $K_j \in \K$, we have
  \[
    \iint_{B_R \times B_R} |\varphi(v) - \varphi(v')|^2 K_j(v'-v) \dd v' \dd v
    \geq \lambda \iint_{B_{R/2} \times B_{R/2}} |\varphi(v) - \varphi(v')|^2 |v'-v|^{-d-2s} \dd v' \dd v.
\]

For all $r >0$, consider a cut-off function $\Psi_r$ valued in
$[0,1]$, $\Psi_r \equiv 1$ in $B_{r/2}$ and $\Psi_r \equiv 0$ outside
$B_r$.  Thanks to the uniform upper bound, we
have
\[
  \iint_{B_R \times B_R} \Psi_r (v'-v) |\varphi(v) - \varphi(v')|^2 K_j(v'-v) \dd v' \dd v
  \lesssim \Lambda R^d \|\nabla \varphi\|_{L^\infty (B_R)} r^{2-2s}.
\]
Combining the two previous estimates, we get
\begin{multline*}
  \iint_{B_R \times B_R} (1-\Psi_r) (v'-v) |\varphi(v) - \varphi(v')|^2 K_j(v'-v) \dd v' \dd v \\
  \ge \lambda \iint_{B_{R/2} \times B_{R/2}} |\varphi(v) - \varphi(v')|^2 |v'-v|^{-d-2s} \dd v' \dd v - \mathcal{O} (r^{2-2s}).
\end{multline*}
We can now pass  to the limit as $j \to \infty$ and obtain
\begin{multline*}
  \iint_{B_{R} \times B_{R}} |\varphi(v) - \varphi(v')|^2 K_\infty(v'-v) \mathbf{1}_{|v'-v| \ge r/2} \dd v' \dd v \\
  \ge \lambda \iint_{B_{R/2} \times B_{R/2}} |\varphi(v) - \varphi(v')|^2 |v'-v|^{-d-2s} \dd v' \dd v - \mathcal{O} (r^{2-2s}).
\end{multline*}
Letting $r \to 0^+$  yields the result. 
\end{proof}
%---------------------------------------------------
\begin{lemma}[Compactness of $\K$ for weak-$\ast$
  topology] \label{l:weak-limit-compactness} If $K_j$ is a sequence of
  kernels in the class $\K$ of Definition \ref{d:class-of-kernels},
  then it has a weak-$\ast$ convergent subsequence.
\end{lemma}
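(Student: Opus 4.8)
The plan is to prove compactness of $\K$ under weak-$\ast$ convergence by combining a uniform local mass bound (which gives precompactness via a standard diagonal argument) with the closedness statement of Lemma \ref{l:weak-limit-stay-in-class} (which guarantees the limit stays in $\K$). The key observation is that the upper bound \eqref{e:K-upper-bound} controls the mass of $K$ on dyadic annuli: integrating $|w|^2 K(w)$ over $B_{2^{k+1}} \setminus B_{2^k}$ is bounded by $\Lambda (2^{k+1})^{2-2s}$, so the mass $K(B_{2^{k+1}} \setminus B_{2^k})$ itself is at most $\Lambda 2^{(k+1)(2-2s)} 2^{-2k} = \Lambda 2^{2-2s} 2^{-2sk}$, and summing over $k$ from any $m$ upward shows $K(\R^d \setminus B_{2^m})$ is finite and uniformly bounded in $j$. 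Similarly, for $k$ negative the mass on $B_{2^{k+1}} \setminus B_{2^k}$ is at most $\Lambda 2^{(k+1)(2-2s)} 2^{-2k}$; summing over $k \leq m$ (for any fixed $m$, using $2-2s > 0$) gives a uniform bound for $K(B_{2^m} \setminus \{0\})$. Hence for each compact set $E \subset \R^d \setminus \{0\}$, $\sup_j K_j(E) < \infty$.

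First I would fix an exhaustion of $\R^d \setminus \{0\}$ by compact sets, say $E_m = \overline{B_{2^m}} \setminus B_{2^{-m}}$, and note that the restrictions $K_j|_{E_m}$ form a uniformly bounded sequence of Radon measures on the compact metric space $E_m$. By the Banach--Alaoglu theorem (or the Riesz representation theorem together with weak-$\ast$ sequential compactness of bounded sets in the dual of $C(E_m)$), there is a subsequence along which $K_j|_{E_m}$ converges weakly-$\ast$ to some Radon measure on $E_m$. Then I would perform a diagonal extraction over $m$: choose nested subsequences for each $m$ and take the diagonal, obtaining a single subsequence, still denoted $K_j$, such that for every $m$ the restrictions converge weakly-$\ast$ on $E_m$. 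The limits on the $E_m$ are consistent (each extends the previous one since testing against a function supported in $E_{m}$ is the same whether we view it on $E_m$ or $E_{m+1}$), so they glue to a single Radon measure $K_\infty$ on $\R^d \setminus \{0\}$. Since any continuous compactly supported $\varphi$ with support avoiding the origin is supported in some $E_m$, this gives weak-$\ast$ convergence $K_j \to K_\infty$ in the sense of Definition \ref{d:kernel-weak-limit}.

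Finally, by Lemma \ref{l:weak-limit-stay-in-class}, the weak-$\ast$ limit $K_\infty$ belongs to the class $\K$, which completes the proof. The only mildly delicate point is making sure the diagonal argument produces genuine weak-$\ast$ convergence against \emph{all} admissible test functions, not merely those supported in a fixed $E_m$; but since every such test function has compact support bounded away from the origin, it lies in $C(E_m)$ for $m$ large enough, so this is immediate. I expect the main obstacle — such as it is — to be purely bookkeeping: verifying that the dyadic annulus estimates derived from \eqref{e:K-upper-bound} really do give a \emph{uniform} (in $j$) finite mass bound on each $E_m$, using crucially that $2-2s > 0$ so that the geometric series over the small annuli converges. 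No new ideas beyond Lemma \ref{l:weak-limit-stay-in-class} and standard measure-theoretic compactness are needed.
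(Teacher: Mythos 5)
Your argument follows essentially the same route as the paper's proof: split $\R^d\setminus\{0\}$ into dyadic annuli, use the upper bound \eqref{e:K-upper-bound} to get uniform mass bounds there, apply Banach--Alaoglu on each piece, and extract a single subsequence by a diagonal argument; the gluing into a Radon measure $K_\infty$ and the closing remark via Lemma \ref{l:weak-limit-stay-in-class} are fine (the latter is not needed for the statement as written, but it is harmless and is indeed how the two lemmas are used together later). One intermediate claim, however, is false and should be removed: the mass of $K_j$ on the annulus $B_{2^{k+1}}\setminus B_{2^k}$ is bounded by $\Lambda 2^{2-2s}2^{-2sk}$, which \emph{blows up} as $k\to-\infty$, so the series over $k\le m$ does not converge and $K(B_{2^m}\setminus\{0\})$ is in general infinite for kernels in $\K$ (take $K(w)=|w|^{-d-2s}$). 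The condition $2-2s>0$ controls $\int_{B_r}|w|^2K(w)\dd w$, not the total mass near the origin; indeed, the whole point of the weight $|w|^2$ is that kernels of order $2s$ have non-integrable singularities at $0$. This slip does not break your proof, because the property you actually use is only that every compact set $E\subset\R^d\setminus\{0\}$ is contained in some $\overline{B_{2^m}}\setminus B_{2^{-m}}$ and hence meets finitely many annuli, which gives $\sup_j K_j(E)<\infty$; so derive the uniform bound on each $E_m$ from the finitely many annuli it contains, rather than from a (nonexistent) uniform bound on the full punctured ball.
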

%-------------------------------------------------
\begin{proof}
  We split $\R^d \setminus \{0\}$ into $\bigcup_{k \in \mathbb Z} C_k$
  with $C_k = B_{2^{k-1}} \setminus B_{2^{k-1}}$. The sequence of Radon
  measures $\{ K_j |_{C_k} \}_j$ in $C_k$ are compact because of
  Banach-Alaoglu theorem. Thanks to a diagonal argument, we can thus
  extract a sequence $\{K_{l(j)}\}_j$ converging towards $K_\infty$ on
  each ring $C_k$. In particular, this sequence weak-$\ast$ converges
  to $K_\infty$ in the sense of Definition~\ref{d:kernel-weak-limit}.
\end{proof}
%-------------------------------------------------------------------------

\subsection{Limits of operators}

\begin{lemma}[Limits of operators] \label{l:weak-limit-stability} Let
  $\Omega$ be a open bounded set of $(-\infty,0]\times \R^d \times \R^d$
  and $K_j$ and $f_j:(-\infty,0]\times \R^d \times \R^d \to \R$ be a
  sequence of kernels and functions respectively so that the following
  conditions hold.
\begin{enumerate}
\item\label{1} Each $K_j$ belongs to the class $\K$.
\item\label{2} The sequence $K_j \rightharpoonup K_\infty$ weakly-$\ast$ as $j \to \infty$.
\item\label{3} The sequence $f_j \to f$ locally uniformly in $(-\infty,0]\times \R^d \times \R^d$ as $j \to \infty$.
\item\label{4} The sequence $f_j$ is uniformly bounded in $C_\ell^{2s+\eta}(\Omega)$ for some $\eta > 0$.
\item\label{5} There is a function $\omega: [1,\infty) \to \R$ so that
  \( \int_{1}^\infty \omega(r) r^{-1-2s} \dd r < +\infty\) and for every
  $j \in \mathbb N$,
  \[  \forall r \ge 1, \; \forall  v \in B_r \setminus B_{r/2}, \qquad  f_j(v) \le \omega(r).\]
\end{enumerate}
 Then we have
\[ \LL_j f_j \to \LL_\infty f \qquad \text{locally uniformly in } \Omega \text{ as } j \to \infty\]
where $\LL_j$ is the integral operator corresponding to $K_j$, see \eqref{e:L}.
\end{lemma}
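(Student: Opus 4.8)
The plan is to split the integral defining $\LL_j f_j(z)$ into a near part on $B_\delta$, an intermediate part on $B_R \setminus B_\delta$, and a tail part on $\R^d \setminus B_R$, and to show that each piece can be controlled uniformly and passed to the limit. Fix a compact set $\Omega' \Subset \Omega$ and a point $z = (t,x,v) \in \Omega'$. For the intermediate part, the domain of integration is bounded away from the origin, so the weak-$\ast$ convergence $K_j \rightharpoonup K_\infty$ from hypothesis \eqref{2} applies directly, provided we also have the uniform local convergence $f_j \to f$ from \eqref{3}; the combination of these two is exactly the kind of convergence of $(\text{function})\times(\text{measure})$ products that one gets by adding and subtracting $f(z+w)K_j(w)$ and using that $f_j - f \to 0$ uniformly on compact sets together with the uniform mass bound $\int_{B_R \setminus B_\delta} K_j \dd w \lesssim \Lambda \delta^{-2s}$ coming from \eqref{e:K-upper-bound}. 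So on $B_R \setminus B_\delta$ we get $\int (f_j(z+w)-f_j(z))K_j(w)\dd w \to \int (f(z+w)-f(z))K_\infty(w)\dd w$, and with a little care (equicontinuity in $z$, which follows from \eqref{4}) this convergence is uniform in $z \in \Omega'$.

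\textbf{Controlling the near and far parts uniformly.} The point of the splitting is that the near and far contributions are small, uniformly in $j$, once $\delta$ is small and $R$ is large. For the near part, use the symmetrization as in the discussion preceding \eqref{e:d2v0}: since $f_j$ is uniformly bounded in $C_\ell^{2s+\eta}(\Omega)$ by \eqref{4}, and $C_\ell^{2s+\eta}$ regularity in $z$ entails $C^{2s+\eta}$ regularity in $v$ with a comparable bound, we get
\[
\left| \frac12 \int_{B_\delta} (f_j(z+w) + f_j(z-w) - 2f_j(z)) K_j(w) \dd w \right| \lesssim [f_j]_{C^{2s+\eta}} \int_{B_\delta} |w|^{2s+\eta} K_j(w) \dd w \lesssim \Lambda \, C \, \delta^{\eta},
\]
where the last integral is estimated by \eqref{e:K-upper-bound} (split dyadically: $\int_{B_\delta}|w|^{2s+\eta}K_j \lesssim \sum_{2^{-k}\le\delta} (2^{-k})^{2s+\eta} \Lambda (2^{-k})^{-2s} \lesssim \Lambda \delta^\eta$). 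This bound is uniform in $j$ and vanishes as $\delta \to 0$. The same symmetrized estimate holds for $f$ in place of $f_j$. For the far part, Lemma~\ref{l:tail} together with the uniform growth bound $f_j(z+w) \le \omega(|z+w|) \lesssim \omega(|w|)$ from \eqref{5} (adjusting the majorant to account for the bounded shift by $z$, valid since $z$ ranges over a bounded set) gives
\[
\left| \int_{\R^d \setminus B_R} (f_j(z+w) - f_j(z)) K_j(w) \dd w \right| \lesssim \Lambda \int_{R/2}^\infty \tilde\omega(r) r^{-1-2s} \dd r + \Lambda \|f_j\|_{L^\infty(\Omega)} R^{-2s},
\]
which is uniform in $j$ (using again \eqref{4} for the $L^\infty$ bound) and tends to $0$ as $R \to \infty$ by the integrability assumption in \eqref{5}; the analogous bound holds for $f$, noting that $f$ inherits the growth bound from \eqref{3} and \eqref{5} and the regularity from \eqref{4}.

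\textbf{Assembling the estimate.} Given $\eps > 0$, first choose $\delta$ small and $R$ large so that the near and far contributions of both $\LL_j f_j$ and $\LL_\infty f$ are each less than $\eps/4$, uniformly in $j$ and in $z \in \Omega'$ — this is possible by the two displayed bounds above. Then, with $\delta$ and $R$ now fixed, the intermediate contribution converges uniformly on $\Omega'$ by the weak-$\ast$ convergence argument of the first paragraph, so it is less than $\eps/4$ for $j$ large. Combining the three pieces gives $\|\LL_j f_j - \LL_\infty f\|_{L^\infty(\Omega')} < \eps$ for $j$ large, which is the claimed locally uniform convergence.

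\textbf{Main obstacle.} The step I expect to be the most delicate is the intermediate part: weak-$\ast$ convergence of measures only pairs against a \emph{fixed} continuous test function, whereas here the test function $w \mapsto f_j(z+w) - f_j(z)$ itself depends on $j$ (and on $z$). Handling this requires splitting $f_j(z+w)K_j(w) = f(z+w)K_j(w) + (f_j - f)(z+w)K_j(w)$, passing to the limit in the first term by genuine weak-$\ast$ convergence (the integrand $w \mapsto (f(z+w)-f(z))\mathbf 1_{B_R \setminus B_\delta}(w)$ is continuous and supported away from $0$, up to a harmless mollification of the indicator at $|w|=\delta$ and $|w|=R$, or simply by noting $K_\infty$ charges no sphere after an approximation argument), and bounding the second term by $\|f_j - f\|_{L^\infty(B_R(\text{proj}_v\Omega'))} \cdot \int_{B_R\setminus B_\delta} K_j \dd w$, which is $o(1)$ by \eqref{3} and the uniform mass bound. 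Uniformity in $z$ is then a matter of an Arzel\`a–Ascoli / equicontinuity argument using \eqref{4}.
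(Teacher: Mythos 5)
Your proposal is correct and follows essentially the same route as the paper's proof: the same three-region splitting, with the near part controlled by symmetrization and the uniform $C_\ell^{2s+\eta}$ bound, the tail by Lemma~\ref{l:tail} and the shifted majorant from hypothesis~\eqref{5}, and the intermediate part handled by adding and subtracting $f$ so that the uniform convergence $f_j \to f$ plus the mass bound $\int_{B_R\setminus B_\delta}K_j \lesssim \Lambda\delta^{-2s}$ absorbs the $j$-dependence of the test function before applying weak-$\ast$ convergence against the fixed function $f$. Your extra care about testing the weak-$\ast$ limit against an indicator-truncated integrand is a legitimate technical point the paper passes over silently, but it does not change the argument.
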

\begin{proof}
  Let $\eps>0$ be arbitrary.  

We use the assumption \eqref{4} to bound the part of the integrals in $\LL_j f_j$ and $\LL f$ around the origin. Thanks to the symmetry assumption of the kernels,
\begin{equation} \label{e:wc1}
 \begin{aligned}
\left \vert \PV \int_{B_\rho} (f_j(v+w) - f_j(v)) K_j(w) \dd w \right \vert &= \left \vert \frac 12 \int_{B_\rho} (f_j(v+w) + f_j(v-w) - 2f_j(v)) K_j(w) \dd w \right \vert, \\
&\leq \frac 12 [f_j]_{C_\ell^{2s+\eta}(\Omega)} \int_{B_\rho} |w|^{2s+\eta} K_j(w) \dd w, \\
&\lesssim \Lambda \rho^{\eta} < \eps/8\qquad  \text{provided that $\rho$ is sufficiently small.}
\end{aligned} 
\end{equation}

We use the assumption \eqref{5} to bound the tails of the integrals. Note that for any $v \in \Omega$ and $r > \diam(\Omega)$, we can obtain a common majorant function $\omega_v(r)$ for all functions $f_j$, as in \eqref{e:majorant}, by the formula
\[ \omega_v(r) \leq \omega(r - \diam(\Omega)) + \|f_j\|_{L^\infty}.\]
Using Lemma \ref{l:tail}, for $R$ sufficiently large,
\begin{equation} \label{e:wc2}
\left \vert \int_{\R^d \setminus B_R} (f_j(v+w) - f_j(v)) K_j(w) \dd w \right \vert \leq \int_{R/2}^\infty  \left( \omega(r - \diam(\Omega)) + \|f_j\|_{L^\infty} \right) r^{-1-2s} \dd r < \eps/8
\end{equation}

Using that $K_j \in \K$ and $f_j \to f$ locally uniformly, then for $j$ sufficiently large
\begin{equation} \label{e:wc3}
 \int_{B^R \setminus B_\rho} |f_j(v+w) - f_j(v) - f(v+w) + f(v)| K_j(w) \dd w \lesssim \Lambda r^{-2s} \|f_j - f\|_{L^\infty(B_R + \Omega)} < \eps/4.
\end{equation}

Finally, since $K_j \to K$ weak-$\ast$, then for $j$ large
\begin{equation} \label{e:wc4}
 \left\vert \int_{B_R \setminus B_\rho} (f(v+w) - f(v)) (K_j-K) \dd w \right \vert < \eps/4.
\end{equation}
Note that because $f$ is a continuous function on $\R^{1+2d}$, the choice of $j$ can be made uniform with respect to the point $z = (t,x,v) \in \Omega$ by the argument that led to \eqref{e:wc3}.

Adding up \eqref{e:wc1}, \eqref{e:wc2}, \eqref{e:wc3} and \eqref{e:wc4}, we get that $|\LL_j f - \LL f| < \eps$ uniformly in $\Omega$ for $j$ sufficiently large.

\end{proof}

\subsection{Consequences of Assumption~\ref{a:K-holder}}

We gather here some consequences of Assumption~\ref{a:K-holder} that
will be used in the next subsection when deriving H\"older estimates.
\begin{align}
  \label{e:a0-2s+alpha}
  \int_{|w| \le 1} |w|^{2s+\alpha} |K_{z_1} (w) - K_{z_2} (w)| \dd w \lesssim A_0 d_\ell(z_1,z_2)^\alpha,\\
  \label{e:a0-infty}
  \int_{|w| \ge 1} |K_{z_1} (w) - K_{z_2} (w)| \dd w \lesssim A_0 d_\ell(z_1,z_2)^\alpha.
\end{align}
Both inequalities are consequences of the fact that \ref{a:K-holder} implies that for all $r>0$, 
\[ \int_{B_r \setminus B_{r/2}} |K_{z_1} (w) - K_{z_2} (w)| \dd w \lesssim A_0 r^{-2s} d_\ell (z_1,z_2)^\alpha.\]
To get \eqref{e:a0-infty}, we use dyadic rings $B_{2^{k+1}} \setminus B_{2^k}$ and sum over $k$.

\subsection{H\"older  estimates}

We gather here estimates that will be used when proving the main
Schauder estimate, see the terms $A$ and $B$ on page~\pageref{p:ab}.

Let us consider a sign changing kernel $K$ such that $K(w)=K(-w)$ and it satisfies the upper bound
for all $r>0$, 
\begin{equation} \label{e:sign-changing-upper-bound}
 \int_{B_r} |w|^2 |K(w)| \dd w \leq \Lambda r^{2-2s}.
\end{equation}
Let us study the corresponding integral operator 
\[ \LL_K f(z) = \int (f(z \circ (0,0,w)) -f(z)) K(w) \dd w .\]
We start with a global estimate.
% ------------------------------
\begin{lemma}\label{l:estimA0} Assume $\alpha < \min(1,2s)$. For any sign-changing symmetric kernel $K$ satisfying \eqref{e:sign-changing-upper-bound}, and  {$f \in C_\ell^{2s+\alpha}(\R^{1+2d})$}, we have the estimate
\[ [\LL_K f]_{C_\ell^{\alpha}(\R^{2d+1})} \lesssim \Lambda [f]_{C_\ell^{2s+\alpha} (\R^{2d+1})}.\]
\end{lemma}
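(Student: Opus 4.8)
The plan is to prove the estimate pointwise by freezing a base point $z_0$ and comparing $\LL_K f(z)$ with a suitable polynomial near $z_0$, exactly matching the definition of $C_\ell^\alpha$. Since $\LL_K$ commutes with left translations (the increment $z \circ (0,0,w)$ is built from the group law), it suffices by the left invariance of $d_\ell$ and of $\|\cdot\|$ to work at $z_0 = 0$, and then show $|\LL_K f(z) - P(z)| \lesssim \Lambda [f]_{C_\ell^{2s+\alpha}}\|z\|^\alpha$ for an appropriate polynomial $P$ with $\deg_k P < \alpha$; since $\alpha < \min(1,2s) \le 1$, the only admissible polynomial is the constant $P \equiv \LL_K f(0)$, so the whole statement reduces to the estimate
\[
 |\LL_K f(z) - \LL_K f(0)| \lesssim \Lambda [f]_{C_\ell^{2s+\alpha}(\R^{1+2d})}\,\|z\|^\alpha .
\]

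To estimate this difference, I would split the integral over $w$ at a radius $\rho = \|z\|$ (the natural length scale) into an inner part $|w| \le \rho$ and a tail $|w| > \rho$. For the inner part, symmetrize using $K(w) = K(-w)$ to write both $\LL_K f(z)$ and $\LL_K f(0)$ as $\tfrac12\int (f(\cdot + w) + f(\cdot - w) - 2f(\cdot))K(w)\dd w$ in the $v$-variable; since $f \in C_\ell^{2s+\alpha}$, each second difference along $w$ is controlled by $[f]_{C_\ell^{2s+\alpha}}|w|^{2s+\alpha}$ (here $2s+\alpha < 2$, so this is the relevant Hölder exponent, using that along pure $v$-increments $d_\ell$ reduces to $|w|$), and then $\int_{B_\rho}|w|^{2s+\alpha}|K(w)|\dd w \lesssim \Lambda \rho^\alpha$ by \eqref{e:sign-changing-upper-bound} summed over dyadic rings. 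This bounds the inner contribution of each term by $\Lambda[f]_{C_\ell^{2s+\alpha}}\rho^\alpha = \Lambda[f]_{C_\ell^{2s+\alpha}}\|z\|^\alpha$, which is what we want.

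The tail $|w| > \rho$ is the main obstacle, since there the crude bound on $f(z\circ(0,0,w)) - f(z)$ is only $O(|w|^{2s+\alpha})$, which is far from summable against $|w|^{-d}$-type weights from the upper bound — and indeed there is no cancellation for large $w$ unless we exploit the comparison between the two base points. The point is to estimate the \emph{difference} $\big(f(z\circ(0,0,w)) - f(z)\big) - \big(f((0,0,w)) - f(0)\big)$ rather than each bracket separately; writing $g(w) := f(z\circ(0,0,w)) - f((0,0,w))$, this is $g(w) - g(0)$, and since $f \in C_\ell^{2s+\alpha}$ one has $|g(w) - g(0)|$ controlled by $[f]_{C_\ell^{2s+\alpha}}$ times a quantity that is $O(\|z\|^{2s+\alpha})$ for $w$ near $0$ but, crucially, only grows like $O(\|z\|^{\alpha}|w|^{2s})$ (or better) as $|w|\to\infty$ — this is where the matching between the kinetic degree of the increment $z$ and the scaling of $d_\ell$ must be used carefully, comparing $d_\ell(z\circ(0,0,w), (0,0,w))$ against $\|z\|$ and $|w|$. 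Once that growth bound is in hand, $\int_{|w|>\rho}\|z\|^\alpha |w|^{2s}|K(w)|\,|w|^{-\text{(appropriate power)}}\dd w$ converges (again by dyadic decomposition of \eqref{e:sign-changing-upper-bound}) and yields $\lesssim \Lambda[f]_{C_\ell^{2s+\alpha}}\|z\|^\alpha$. I expect the delicate bookkeeping to be precisely the growth estimate on $g(w) - g(0)$, i.e., quantifying how the left-translated copy $f(z\circ(0,0,\cdot))$ and $f((0,0,\cdot))$ stay close at scale $|w|$; one must avoid accidentally using commutativity of $\circ$ or the exact triangle inequality when $s < 1/2$, as the paper warns.
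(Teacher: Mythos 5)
Your overall strategy coincides with the paper's (reduce, since $\alpha<\min(1,2s)$, to bounding the increment of $\LL_K f$; split the $w$-integral at $R=\|z\|$; symmetrize in the inner part; exploit the double-difference cancellation in the tail), but the tail estimate -- which is the entire substance of the lemma -- is not actually carried out, and the one quantitative claim you make there does not close. A bound of the form $|g(w)-g(0)|\lesssim [f]_{C_\ell^{2s+\alpha}}\|z\|^\alpha |w|^{2s}$ is useless for the tail: by \eqref{e:sign-changing-upper-bound} each dyadic ring $B_{2^{k+1}}\setminus B_{2^k}$ carries mass $\int |K|\lesssim \Lambda 2^{-2sk}$, so $\int_{|w|>\rho}|w|^{2s}|K(w)|\dd w$ diverges (every ring contributes $\approx\Lambda$), and the unspecified factor ``$|w|^{-(\text{appropriate power})}$'' you insert is precisely the missing content. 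The paper's proof obtains summable bounds by subtracting the polynomial expansions of $f$ at \emph{both} base points and splitting the double difference into four terms $I_1,\dots,I_4$: the conjugation identity $(0,0,w)^{-1}\circ\xi\circ(0,0,w)=(\xi_1,\xi_2+\xi_1 w,\xi_3)$ and \eqref{e:f3} give the tail growth $\|\xi\|^{2s(2s+\alpha)/(1+2s)}|w|^{(2s+\alpha)/(1+2s)}$, whose $|w|$-exponent is strictly below the naive $2s$ and makes the integral over $\R^d\setminus B_R$ converge with exactly the right power of $R$; the term $I_3$ needs the H\"older continuity of the nonconstant coefficients of $p_z$ (Lemma \ref{l:holder-derivatives}), each with exponent $2s+\alpha-\deg_k m_j<1$; and $I_4$ vanishes only because $2s+\alpha<1+2s$ forbids $x$-monomials in $p_z$. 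None of these ingredients appears in your sketch; comparing $d_\ell(z\circ(0,0,w),(0,0,w))$ with $\|z\|$ and $|w|$ alone cannot produce an integrable bound, because the polynomial part of $f$ at scale $|w|$ must be removed first.

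There is also an error in the inner part: you assert $2s+\alpha<2$, but under the hypothesis $\alpha<\min(1,2s)$ this can fail (e.g.\ $s=0.9$, $\alpha=0.5$). When $2s+\alpha>2$ the expansion $p_z$ contains quadratic $v$-terms which do not cancel under symmetrization, so the symmetrized second difference is only bounded by $[f]_{C_\ell^{2s+\alpha}}|w|^{2s+\alpha}+|D_v^2 f(z)||w|^2$, and the second contribution gives $|D_v^2 f(z)|\,\Lambda R^{2-2s}$ at each base point, which is not small by itself. The paper handles this by taking the difference in the base point and invoking Lemma \ref{l:holder-derivatives}, namely $|\Delta_\xi D_v^2 f|\lesssim [f]_{C_\ell^{2s+\alpha}}\|\xi\|^{2s+\alpha-2}$, which combined with $R=\|\xi\|$ again yields $\|\xi\|^{\alpha}$. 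This case must be treated, not excluded by an incorrect inequality; apart from these two points, your reduction to a single base point and the inner estimate in the regime $2s+\alpha\le 2$ are fine.
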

% ---------------------------------

\begin{proof}
Let us start by fixing some notation. As usual, we denote by $p_z$ the polynomial expansion of $f$ at $z$ so that $\deg_k p_z < 2s+\alpha$ and for $z, \xi \in \R^{1+2d}$,
\[ |f(z \circ \xi) - p_z(\xi)| \leq [f]_{C_\ell^{2s+\alpha}} \|\xi\|^{2s+\alpha}.\]

Let us also write, for $z,\xi, \zeta \in \R^{1+2d}$,
\[ \Delta_\xi f(z) = f(z \circ \xi) - f(z) \qquad \text{ and } \qquad \delta_\zeta p_z(\xi) = p_{z \circ \zeta}(\xi) - p_z(\xi).\]

We must estimate the following quantity
\begin{align*} 
 \Delta_\xi \LL_K f(z) &= \LL_K f(z \circ \xi) - \LL_K f(z), \\
&= \int_{\R^d} [ f(z \circ \xi \circ (0,0,w)) - f(z \circ (0,0,w)) - f(z \circ \xi) + f(z)] K(w) \dd w, \\
&= \int_{\R^d} [ \Delta_\xi \Delta_{(0,0,w)} f(z)] K(w) \dd w.
\end{align*}

Since $\alpha< \min(1,2s)$, proving en estimate for $[\LL_K f]_{C_\ell^\alpha}$ amount to finding the right upper bound for $| \Delta_\xi \LL_K f(z)|$.

We split the integral above into two subdomains: $B_R$ and $\R^d \setminus B_R$. We will later choose $R = \|\xi\|$.

Estimating the integral in $B_R$, we symmetrize using that $K(w) = K(-w)$ and 
\begin{align*}
\left \vert \int_{B_R} \Delta_{(0,0,w)}f(z) K(w) \dd w \right \vert &= \left \vert \int_{B_R} [ f(z \circ (0,0,w)) - f(z) ] K(w) \dd w \right \vert , \\
&= \frac 12 \left \vert \int_{B_R} [ f(z \circ (0,0,w))+f(z \circ (0,0,-w))  - 2f(z) ] K(w) \dd w \right \vert, \\
\intertext{Here we use that $|f(z \circ (0,0,w)) - p_z(0,0,w)| \leq [f]_{C_\ell^{2s+\alpha}} |w|^{2s+\alpha}$. The polynomial $p_z$ has kinetic degree smaller than $2s+\alpha$. The first order terms in $v$ cancel out by the symmetrization. There may be second order terms in $v$ if $2s+\alpha > 2$. There cannot be higher order terms in $v$ with our restrictions on $s$ and $\alpha$. Any term involving $t$ and $x$ vanishes when evaluating on $(0,0,w)$. Thus, when $2s+\alpha$ we continue using the assumption \eqref{e:sign-changing-upper-bound}}
&\leq \frac 12 [f]_{C_\ell^{2s+\alpha}} \int_{B_R} |w|^{2s+\alpha} |K(w)| \dd w, \\
&\lesssim [f]_{C_\ell^{2s+\alpha}} \Lambda R^{\alpha} \\
\end{align*}

When $2s+\alpha>2$, we cannot cancel out the second order terms in $v$ in the polynomial $p_z$. Thus, in that case the same computation leads to
\[
 \left \vert \int_{B_R} \Delta_{(0,0,w)} f(z) K(w) \dd w - \partial_{v_i v_j}f(z) \left( \int_{B_R} w_i w_j K(w) \dd w \right) \right \vert \lesssim [f]_{C_\ell^{2s+\alpha}} \Lambda R^{\alpha}.
\]

In the estimates above, the value of $z \in \R^{1+2d}$ is arbitrary. The inequalities hold for $z \circ \xi$ just as well. Therefore, applying $\Delta_\xi$ we get
\begin{align*} 
\left \vert  \int_{B_R}  \Delta_\xi \Delta_{(0,0,w)} f(z) K(w) \dd w \right \vert &\lesssim  [f]_{C_\ell^{2s+\alpha}} \Lambda R^{\alpha} \\
& \qquad \text{ ( if $2s+\alpha>2$ ) }+ |\Delta_\xi D^2 f| \Lambda R^{2-2s}, \\
 &\lesssim  [f]_{C_\ell^{2s+\alpha}} \Lambda R^{\alpha} \\
& \qquad \text{ ( if $2s+\alpha>2$ ) }+ \| \xi \|^{2s+\alpha-2} \Lambda R^{2-2s}, 
\end{align*}
In the last inequality we used Lemma \ref{l:holder-derivatives} for the case $2s+\alpha>2$. Note that for any value of $2s+\alpha$, when we choose $R = \|\xi\|$ we will get
\[ \left \vert  \int_{B_R}  \Delta_\xi \Delta_{(0,0,w)} f(z) K(w) \dd w \right \vert \lesssim [f]_{C_\ell^{2s+\alpha}} \Lambda \|\xi\|^{\alpha}.\]

Now we move on to estimate the part of the integral in $\R^d \setminus B_R$. We use the following two inequalities
\begin{align}
| f(z \circ \xi) - p_{z} ( \xi ) | &\leq [f]_{C_\ell^{2s+\alpha}} \| \xi \|^{2s+\alpha},  \label{e:f1} \\
| f(z \circ \xi \circ (0,0,w)) - p_{z \circ (0,0,w)} ((0,0,w)^{-1} \circ \xi \circ (0,0,w)) | &\leq [f]_{C_\ell^{2s+\alpha}} \| (0,0,w)^{-1} \circ \xi \circ (0,0,w) \|^{2s+\alpha}. \label{e:f2}
\end{align}

The second one naturally requires some further analysis. We observe that
\[ (0,0,w)^{-1} \circ \xi \circ (0,0,w) = (\xi_1, \xi_2 + \xi_1 w, \xi_3).\]
Therefore
\begin{equation} \label{e:f3}
 \| (0,0,w)^{-1} \circ \xi \circ (0,0,w)  \| \lesssim \|\xi\| + ( |\xi_1| |w| )^{1/(1+2s)} \leq \|\xi\| + \|\xi_1\|^{\frac{2s}{1+2s}}  |w|^{\frac 1 {1+2s}} .
\end{equation}

We will split the integral in $\R^d \setminus B_R$ as the sum of several terms.
\[ \left \vert \int_{\R^d} [ \Delta_\xi \Delta_{(0,0,w)} f(z)] K(w) \dd w \right\vert \leq I_1 + I_2 + I_3 + I_4,\]
where
\begin{align*} 
 I_1 &:= \int_{\R^d \setminus B_R} | f(z \circ \xi) - p_{z} ( \xi ) | K(w) \dd w, \\
I_2 &:=  \int_{\R^d \setminus B_R} | f(z \circ \xi \circ (0,0,w)) - p_{z \circ (0,0,w)} ((0,0,w)^{-1} \circ \xi \circ (0,0,w)) | K(w) \dd w, \\
I_3 &:=  \int_{\R^d \setminus B_R} |  \delta_{(0,0,w)} p_{z} ( \xi ) - \Delta_{(0,0,w)} f(z) | K(w) \dd w, \\
I_4 &:=  \int_{\R^d \setminus B_R} |  p_{z \circ (0,0,w)} ((0,0,w)^{-1} \circ \xi \circ (0,0,w)) - p_{z \circ (0,0,w)}(\xi) | K(w) \dd w.
\end{align*}

We bound $I_1$ easily using \eqref{e:f1} and \eqref{e:sign-changing-upper-bound}.
\begin{equation} \label{e:I1}
 I_1 \lesssim [f]_{C_\ell^{2s+\alpha}} \Lambda \|\xi\|^{2s+\alpha} R^{-\alpha}.
\end{equation}

We bound $I_2$ following the procedure, but applying \eqref{e:f3}.
\begin{equation} \label{e:I2}
\begin{aligned}
 I_2 &\lesssim [f]_{C_\ell^{2s+\alpha}} \left( \Lambda \|\xi\|^{2s+\alpha} R^{-\alpha} + \|\xi\|^{\frac{2s (2s+\alpha)}{1+2s}}  \int_{\R^d \setminus B_R} |w|^{\frac{2s+\alpha}{1+2s}} K(w) \dd w \right), \\
&\lesssim [f]_{C_\ell^{2s+\alpha}} \Lambda \left(  \|\xi\|^{2s+\alpha} R^{-\alpha} + \|\xi\|^{\frac{2s (2s+\alpha)}{1+2s}} R^{\frac{2s+\alpha}{1+2s}  -2s} \right).
\end{aligned} 
\end{equation}

For the analysis of $I_3$, we write $p_z$ as a sum of monomials.
\[ p_z(\xi) = \sum_{\deg_k m_j < 2s+\alpha} a_j(z) m_j(\xi).\]
Moreover $a_0(z) = f(z)$ and
\[ \delta_{(0,0,w)} p_z(\xi) = \sum_{\deg_k m_j < 2s+\alpha} \Delta_{(0,0,w)} a_j(z) m_j(\xi).\]
From Lemma \ref{l:holder-derivatives}, we know that $[a_j]_{C_\ell^{2s+\alpha-\deg_k m_j}} \leq [f]_{C_\ell^{2s+\alpha}}$. Note that $2s+\alpha-\deg_k m_j < 1$ for any monomial such that $0 < \deg_k m_j < 2s+\alpha$. Thus,
\[ |  \delta_{(0,0,w)} p_{z} ( \xi ) - \Delta_{(0,0,w)} f(z) |  \lesssim [f]_{C_\ell^{2s+\alpha}} \sum_{0 < \deg_k m_j < 2s+\alpha} |w|^{2s+\alpha-\deg_k m_j} \|\xi\|^{\deg_k m_j}.\]
Therefore,
\[ I_3 \lesssim [f]_{C_\ell^{2s+\alpha}} \Lambda \sum_{0 < \deg_k m_j < 2s+\alpha} R^{\alpha - \deg_k m_j} \|\xi\|^{\deg_k m_j}.\]

Regarding $I_4$, note that since $2s+\alpha < 1+2s$, the polynomial $p_z$ cannot have a term that involves its second component ($x$). Since $(0,0,w)^{-1} \circ \xi \circ (0,0,w)$ and $\xi$ differ only on their second component, then actually $I_4 = 0$.

When we choose $R = \|\xi\|$, the estimates of all terms are $\lesssim \Lambda  [f]_{C_\ell^{2s+\alpha}} \|\xi\|^{\alpha}$. And therefore we conclude the proof.
\end{proof}

%\begin{remark} Lemma \ref{l:estimA0} holds for larger values of
 % $\alpha$ as well. We assume $\alpha < \min(1,2s)$ because it makes the proof simpler. For these small values of $\alpha$, the  $C_\ell^\alpha$ norms reduce to the estimation of incremental quotients   as opposed to having to deal with polynomial expansions of positive  degree.
%\end{remark}

We next derive a local estimate from the global one. 
% ------------------------------
\begin{lemma}\label{l:estimA0-loc}
  Let $\alpha = \frac{2s}{1+2s} \gamma$ with $\gamma < \min (1,2s)$. Then
  \( [\LL_K f]_{C_\ell^{\alpha}(Q_{1/2})} \lesssim [f]_{C_\ell^{2s+\alpha}(Q_1)}  + [f]_{C_\ell^\gamma((-1,0]\times B_1 \times \R^d)}.\)
\end{lemma}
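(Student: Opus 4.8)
The strategy is to reduce the local estimate to the global one (Lemma~\ref{l:estimA0}) by a cut-off argument. Write $f = f_1 + f_2$ where $f_1 = f \eta$ for a smooth cut-off $\eta$ that equals $1$ on a neighborhood of $Q_{1/2}$ (say on $Q_{3/4}$) and is supported in $Q_1$, with the cut-off taken in the $v$-variable only so as not to destroy the structure near the parabolic boundary; more precisely one should arrange that $f_1$ agrees with $f$ on $Q_{3/4}$ and that $f_1 \in C_\ell^{2s+\alpha}(\R^{1+2d})$ with $[f_1]_{C_\ell^{2s+\alpha}(\R^{1+2d})} \lesssim [f]_{C_\ell^{2s+\alpha}(Q_1)} + [f]_{C_\ell^\gamma}$, while $f_2 = f - f_1$ vanishes on $Q_{3/4}$. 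For the $f_1$ piece, Lemma~\ref{l:estimA0} gives directly
\[
[\LL_K f_1]_{C_\ell^\alpha(Q_{1/2})} \leq [\LL_K f_1]_{C_\ell^\alpha(\R^{1+2d})} \lesssim \Lambda [f_1]_{C_\ell^{2s+\alpha}(\R^{1+2d})} \lesssim \Lambda\left([f]_{C_\ell^{2s+\alpha}(Q_1)} + [f]_{C_\ell^\gamma}\right).
\]

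\textbf{The tail term.} The heart of the matter is the remainder $\LL_K f_2$ on $Q_{1/2}$. Since $f_2 \equiv 0$ on $Q_{3/4}$, for $z \in Q_{1/2}$ the integrand $\Delta_{(0,0,w)} f_2(z) = f_2(z \circ (0,0,w)) - f_2(z)$ is supported in $|w| \gtrsim 1$ (a fixed distance, because translating $z \in Q_{1/2}$ by $(0,0,w)$ with $|w|$ small keeps us inside $Q_{3/4}$ where $f_2$ vanishes), so there is no singularity at the origin and $\LL_K f_2(z) = \int_{|w|\gtrsim 1}(f_2(z\circ(0,0,w)) - f_2(z)) K(w)\dd w$. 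To bound $[\LL_K f_2]_{C_\ell^\alpha(Q_{1/2})}$ one estimates the increment $\Delta_\xi \LL_K f_2(z)$ for $z, z\circ\xi \in Q_{1/2}$, $\|\xi\|$ small: write it as $\int_{|w|\gtrsim 1}\Delta_\xi\Delta_{(0,0,w)}f_2(z)\,K(w)\dd w$, and compare $f_2$ with its polynomial expansion $p_z$ exactly as in the proof of Lemma~\ref{l:estimA0} (terms $I_1$--$I_4$), except that now there is no small-ball contribution and the relevant bound on $f_2$ away from $Q_{3/4}$ is the global $C_\ell^\gamma$ bound rather than $C_\ell^{2s+\alpha}$. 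The conjugation identity $(0,0,w)^{-1}\circ\xi\circ(0,0,w) = (\xi_1,\xi_2+\xi_1 w,\xi_3)$ produces a term growing like $|\xi_1|^{1/(1+2s)}|w|^{1/(1+2s)} \approx \|\xi\|^{2s/(1+2s)}|w|^{1/(1+2s)}$ in the $x$-slot, and this is exactly where the relation $\alpha = \frac{2s}{1+2s}\gamma$ is forced: using $f_2 \in C_\ell^\gamma$ one gets increments of size $\lesssim [f]_{C_\ell^\gamma}(\|\xi\|^\gamma + \|\xi\|^{\frac{2s\gamma}{1+2s}}|w|^{\frac{\gamma}{1+2s}})$, and against the measure $K(w)\dd w$ on $|w|\gtrsim 1$ — whose total mass on dyadic rings decays like $r^{-2s}$ by \eqref{e:sign-changing-upper-bound} — the integral $\int_{|w|\ge 1}|w|^{\gamma/(1+2s)}K(w)\dd w$ converges precisely because $\gamma/(1+2s) < 2s \Leftrightarrow \gamma < 2s(1+2s)$, which holds since $\gamma < 2s \le 2s(1+2s)$. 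This yields $[\LL_K f_2]_{C_\ell^\alpha(Q_{1/2})} \lesssim \Lambda [f]_{C_\ell^\gamma((-1,0]\times B_1\times\R^d)}$.

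\textbf{Main obstacle.} I expect the delicate point to be the construction of the decomposition $f = f_1 + f_2$: one needs a cut-off that (a) acts in $v$ only (cutting in $t$ or $x$ would interfere with the left-invariant structure and the parabolic boundary), (b) yields $[f_1]_{C_\ell^{2s+\alpha}(\R^{1+2d})} \lesssim [f]_{C_\ell^{2s+\alpha}(Q_1)} + [f]_{C_\ell^\gamma}$ — which requires a product/Leibniz estimate for $C_\ell$ norms under multiplication by a smooth $v$-cut-off, combined with an extension of $f|_{Q_1}$ and control of the lower-order $C_\ell^\gamma$ data outside, and (c) leaves $f_2$ supported away from $Q_{3/4}$ in the $w$-translates seen from $Q_{1/2}$. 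The combinatorics of tracking which polynomial terms survive (recalling $2s+\alpha < 1+2s$, so no $x$-terms appear, killing $I_4$) is routine given Lemma~\ref{l:estimA0}, but one must be careful that the cut-off is genuinely in $v$ while the support separation statement uses the kinetic geometry: for $z\in Q_{1/2}$ and $|w|$ small, $z\circ(0,0,w) = (t,x,v+w)$ stays in $Q_{3/4}$, which is clear since only the $v$-coordinate moves.
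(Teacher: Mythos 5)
Your proof is essentially correct, but it localizes on the opposite side from the paper: you split the \emph{function}, $f=f\eta+f(1-\eta)$ with a $v$-cutoff, whereas the paper splits the \emph{kernel}, writing $\LL_K=\LL_{\tilde K}+\LL_{(1-\mathbf{1}_{B_\rho})K}$ with $\tilde K=\mathbf{1}_{B_\rho}K$. The far-field halves of the two arguments are the same computation: your bound via the conjugation identity $(0,0,w)^{-1}\circ\xi\circ(0,0,w)=(\xi_1,\xi_2+\xi_1w,\xi_3)$, the resulting factor $(1+|w|^{\gamma/(1+2s)})\,d_\ell(z_1,z_2)^{2s\gamma/(1+2s)}$, and the convergence of $\int_{|w|\gtrsim 1}|w|^{\gamma/(1+2s)}K(w)\,\dd w$ because $\gamma/(1+2s)<2s$, is exactly the paper's estimate \eqref{e:l4} for its tail term $C(z)$; this is where $\alpha=\tfrac{2s}{1+2s}\gamma$ is used in both. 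The difference is the near part. The paper's kernel truncation means $\LL_{\tilde K}f$ evaluated on $Q_{1/2}$ only ever sees $f$ inside $Q_1$, so Lemma~\ref{l:estimA0} applies with the local seminorm $[f]_{C_\ell^{2s+\alpha}(Q_1)}$ directly, with no product rule, no extension, and no appearance of lower-order norms. Your route instead needs the Leibniz-type bound for $[f\eta]_{C_\ell^{2s+\alpha}}$, and here two caveats matter. First, the inequality $[f\eta]_{C_\ell^{2s+\alpha}}\lesssim [f]_{C_\ell^{2s+\alpha}(Q_1)}+[f]_{C_\ell^\gamma}$ is false as literally stated (take $f$ a large constant: both seminorms vanish); the Leibniz terms bring in $\|f\|_{L^\infty}$ and intermediate norms, and you must first subtract a constant from $f$ — legitimate since $\LL_K$ annihilates constants and all quantities in the lemma are seminorms — and then use interpolation on $Q_1$. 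Second, the zero-extension of $f\eta$ to all of $\R^{1+2d}$ is not $C_\ell^{2s+\alpha}$ across $t=-1$ or $|x|=1$ (your cutoff acts in $v$ only); this is harmless because $\LL_K$ acts only in $v$, so it suffices to run the proof of Lemma~\ref{l:estimA0} on the time–space slab over the projection of $Q_{1/2}$, where only $v$-translates of points of $Q_{1/2}$ appear — but this should be said rather than invoking the global lemma verbatim. With those repairs your argument goes through, and it has the side benefit of mirroring the localization by $\eta$ that the paper performs anyway in Proposition~\ref{p:localized-improvement-of-holder} (its term $B$), while the paper's kernel-splitting is the shorter path to this particular lemma.
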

% ---------------------------------
\begin{proof}
  It is enough to write $\LL_K f(z) = \LL_{\tilde{K}}f (z) + C(z)$ where
  $\tilde{K} (w) = \mathbf{1}_{B_\rho} (w) K(w)$ and $C(z)$ is $\LL_K$
  where $K$ is replaced with  $(1-\mathbf{1}_{B_\rho}) K(w)$ and $\rho$
  small. From the previous lemma, we have
  \[
    [\LL_{\tilde K}f]_{C_\ell^{\alpha}(Q_{1/2})} \lesssim [f]_{C_\ell^{2s+\alpha}  (Q_1)}.
  \]
  Let us prove that
  \begin{equation}\label{e:C}
    [C]_{C_\ell^\alpha (Q_{1/2})} \lesssim [f]_{C_\ell^\gamma ((-1,0]\times B_1 \times \R^d)} .
  \end{equation}
  In order to do so, we write for $z_1,z_2 \in Q_{1/2}$, 
 \[
   C(z_2) -C(z_1) = \int_{\R^d \setminus B_\rho} (f(z_2 \circ (0,0,w))-f(z_2) -f (z_1 \circ (0,0,w)) + f(z_1 )) k(w) \dd w
 \]
 and we first prove that
 \begin{equation}\label{e:l4}
   | f(z_2 \circ (0,0,w))-f(z_2) -f (z_1 \circ (0,0,w)) + f(z_1 )| \lesssim [f]_{C_\ell^\gamma} (1+ |w|^{\frac\gamma{1+2s}}) d_\ell(z_1,z_2)^\alpha.
 \end{equation}
 On the one hand,  since $f \in C_\ell^\alpha (Q_{1/2})$ and $\alpha \le \gamma$, we have
 \[ | f(z_2) -f (z_1)| \lesssim [f]_{C_\ell^\gamma} d_\ell(z_1,z_2)^\alpha. \]
On the other hand, the $C_\ell^\gamma$ regularity of $f$ yields
 \begin{align*}
   | f(z_2 \circ (0,0,w)) -f (z_1 \circ (0,0,w))| & \leq [f]_{C_\ell^\gamma} d_\ell(z_1 \circ (0,0,w),z_2 \circ (0,0,w))^\gamma \\
                                                  & \lesssim [f]_{C_\ell^\gamma} \|  (0,0,w)^{-1} \circ z_2^{-1} \circ z_1 \circ (0,0,w)\|^\gamma .
 \end{align*}
 We now compute $(0,0,-w) \circ z_2^{-1} \circ z_1 \circ (0,0,w) = z_2^{-1} \circ z_1  - (0,(t_1-t_2)w,0)$, and get
 \begin{align*}
   \|  (0,0,w)^{-1} \circ z_2^{-1} \circ z_1 \circ (0,0,w)\| &\lesssim d_\ell (z_1,z_2) + |t_1-t_2|^{\frac1{1+2s}} |w|^{\frac1{1+2s}} \\
                                                             & \lesssim (1+ |w|^{\frac1{1+2s}}) d_\ell (z_1,z_2)^{\frac{2s}{1+2s}}.
 \end{align*}
 Combining the three previous estimates yields \eqref{e:l4}.  Since
 $\int_{\R^d \setminus B_\rho} (1+ |w|^{\frac\gamma{1+2s}}) k (w) \dd
 w \lesssim \Lambda C_\rho$, thanks to
 Assumption~\eqref{e:K-upper-bound} and the fact that $\gamma < 2s$,
 \eqref{e:l4} implies \eqref{e:C}. This achieves the proof of the
 lemma.
\end{proof}

\subsection{The local H\"older estimate}

The symmetry condition $K(t,x,v,v+w) = K(t,x,v,v-w)$ corresponds to
equations in \emph{non-divergence form}, in the sense that the
integro-differential operator has a structure similar to that of
elliptic equations of non-divergence form (as in
$a_{ij}(t,x,v) \partial_{v_i v_j} f$). It is different of the other
symmetry condition that would make the operator self adjoint
$K(t,x,v,v') = K(t,x,v',v)$, and corresponds to equations in
\emph{divergence form}. The weak Harnack inequality, in the style of
De Giorgi, obtained in \cite{imbert2016weak} does not apply to
\eqref{e:main} precisely because of this distinction of symmetry
assumptions. Our kernels $K$ do not satisfy the cancellation
conditions (1.6) and (1.7) from \cite{imbert2016weak}.

The situation is simpler when we take a translation invariant kernel
$K(w)$ and consider the equation
\begin{equation} \label{e:kinetic-constant}
 f_t + v \cdot \nabla_x f = \LL f + c(t,x,v).
\end{equation}

It is an integro-differential analog of an equation with constant
coefficients. There is no distinction in this case between divergence
and non-divergence form. The kernel $\tilde K(t,x,v,v') = K(v'-v)$
satisfies the symmetry condition (and thus also the cancellation
condition) $\tilde K(t,x,v,v') = \tilde K(t,x,v',v)$ for any kernel
$K$ in $\K$. 

The regularity of the solution $f$ to \eqref{e:kinetic-constant} is
not important. It is straight forward to approximate any
(weak/viscosity) solution to \eqref{e:kinetic-constant} with
$C^\infty$ solutions by mollification. Indeed, if $f$ solves
\eqref{e:kinetic-constant}, then for any smooth compactly supported
function $\varphi : \R^{1+2d} \to \R$, the function
\[
  \varphi \ast_k f (z) = \int_{\R^{1+2d}} \varphi(\xi) f(\xi \circ z)  \dd \xi,
\]
also solves \eqref{e:kinetic-constant} (perhaps in a slightly smaller
domain depending on the support of $\varphi$). Naturally, the function
$\varphi \ast_k f \in C^\infty$ whenever $\varphi \in
C^\infty$. Taking $\varphi$ to be an approximation of the unit mass at
the origin, we approximate any solution of \eqref{e:kinetic-constant}
by a smooth one. Therefore, we can safely assume, without loss of
generality, that every function is $C^\infty$ for the purposes of the
results in this section.

We apply the main result from \cite{imbert2016weak} to our setting.
%-------------------------------------------------------------------
\begin{thm}[Local H\"older estimate] \label{t:weak-harnack} Let $\LL$
  be an integral operator corresponding to a kernel in the class $\K$
  (as in Definition~\ref{d:class-of-kernels}).
  $f: (-1,0] \times B_1 \times \R^d \to \R$ be a function that solves
  the equation \eqref{e:kinetic-constant} in $Q_1$.  Then, the
  following estimate holds
  \[ [f]_{C_\ell^\delta(Q_{1/2})} \leq C \left( \|f\|_{L^\infty((-1,0]
        \times B_1 \times \R^d)} + \|c\|_{L^\infty(Q_1)} \right).\]
  Here $\delta>0$ and $C$ are constants depending only on dimension
  and the parameters $\lambda$ and $\Lambda$ of Definition
  \ref{d:class-of-kernels}.
\end{thm}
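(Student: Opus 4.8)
The statement is essentially a translation of the main theorem of \cite{imbert2016weak} into the language and notation of this paper, so the plan is to verify that the hypotheses of that theorem are met by equation \eqref{e:kinetic-constant} and then quote it. First I would recall that, as discussed in the paragraph preceding the statement, there is no loss of generality in assuming $f \in C^\infty$: one mollifies in the group variable using $\varphi \ast_k f$, which still solves \eqref{e:kinetic-constant} with the same kernel $K \in \K$ (translation-invariant kernels are unaffected by left convolution) and with a mollified right-hand side whose $L^\infty$ norm is controlled by $\|c\|_{L^\infty}$; the estimate for the smooth approximations then passes to the limit. This reduction lets every integral in \eqref{e:L} be evaluated pointwise via \eqref{e:L-computable}.

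Next I would check the structural hypotheses required by \cite{imbert2016weak}. The key point, emphasized in the paragraph above the statement, is that for a translation-invariant kernel $\tilde K(t,x,v,v') = K(v'-v)$ with $K \in \K$, one has the self-adjointness/cancellation symmetry $\tilde K(t,x,v,v') = \tilde K(t,x,v',v)$ for free, so the cancellation conditions (1.6)–(1.7) of \cite{imbert2016weak} hold trivially. The ellipticity hypotheses of \cite{imbert2016weak} — a nonlocal coercivity (energy) bound from below and the second-moment upper bound — are exactly \eqref{e:K-coercivity} (together with the non-degeneracy \eqref{e:K-nondeg} when $s<1/2$) and \eqref{e:K-upper-bound} in Definition \ref{d:class-of-kernels}. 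One should double-check that the coercivity estimate \eqref{e:K-coercivity}, which compares the bilinear form of $K$ on $B_R \times B_R$ with the $(-\Delta)^s$ form on $B_{R/2} \times B_{R/2}$, is in the form needed to run the De Giorgi iteration of \cite{imbert2016weak}; this is the kind of localized energy inequality that drives the gain of integrability there.

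With the hypotheses verified, the conclusion of \cite{imbert2016weak} gives exactly an interior $C^\delta_\ell$ estimate for some $\delta>0$ depending only on $d$, $s$, $\lambda$, $\Lambda$, with the right-hand side $\|f\|_{L^\infty((-1,0]\times B_1 \times \R^d)} + \|c\|_{L^\infty(Q_1)}$; here one uses that $Q_1$ in our notation matches (up to an innocuous change of radii, as noted after Definition \ref{d:distance}) the cylinders used in \cite{imbert2016weak}, and that the global $L^\infty$ norm of $f$ over $(-1,0]\times B_1 \times \R^d$ is what controls the tail term in the integral diffusion. A minor point to address is that the De Giorgi–Nash–Moser estimate in \cite{imbert2016weak} is stated for a possibly different but equivalent choice of kinetic distance; by the observation in Section~\ref{s:discussion} that all the candidate distances are comparable up to a power when restricted to bounded cylinders, the resulting $C^\delta_\ell$ estimate holds for our $d_\ell$ after harmlessly shrinking $\delta$.

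**Main obstacle.** The genuine work is not in the iteration — that is imported wholesale — but in the bookkeeping of hypotheses: confirming that our coercivity estimate \eqref{e:K-coercivity} (plus \eqref{e:K-nondeg} for $s<1/2$) is precisely strong enough to supply the Poincaré-type inequality and the growth-of-energy estimate used in \cite{imbert2016weak}, and that the cancellation conditions there are indeed automatic for translation-invariant symmetric kernels. I expect this matching of notational conventions and ellipticity formulations, rather than any new estimate, to be the only delicate step.
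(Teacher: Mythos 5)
Your overall strategy coincides with the paper's: the theorem is proved by quoting the main result of \cite{imbert2016weak} after verifying its hypotheses, and your remarks about mollifying with $\varphi \ast_k f$, about the cancellation conditions (1.6)--(1.7) being automatic for translation-invariant symmetric kernels, and about the harmless change of distance/cylinders all match what the paper says around the statement. The problem is that you dismiss as ``bookkeeping'' the one step that is not bookkeeping, and you assert that the ellipticity hypotheses of \cite{imbert2016weak} are \emph{exactly} \eqref{e:K-coercivity} and \eqref{e:K-upper-bound}. They are not: \eqref{e:K-coercivity} is a local comparison of bilinear forms on nested balls $B_R\times B_R$ versus $B_{R/2}\times B_{R/2}$, whereas \cite{imbert2016weak} requires a global G\aa rding-type coercivity for compactly supported functions, namely that $-\int f\,\LL f\,\dd v$ controls the full $\dot H^s$ form of $f$ up to a lower-order $L^2$ term. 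You flag this as something ``to double-check'' and then never do it; that check is precisely the content of the paper's proof, so as written your argument has a genuine gap at its only substantive point.

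The missing computation is short but must be supplied. For $f$ supported in $B_{\bar R}$ one writes
\begin{align*}
-\int f\, \LL f \,\dd v &= \frac12 \iint \bigl(f'-f\bigr)^2 K(v'-v)\,\dd v'\,\dd v
\;\ge\; \frac12 \iint_{B_{4\bar R}\times B_{4\bar R}} \bigl(f'-f\bigr)^2 K(v'-v)\,\dd v'\,\dd v\\
&\ge \frac\lambda2 \iint_{B_{2\bar R}\times B_{2\bar R}} \bigl(f'-f\bigr)^2 |v'-v|^{-d-2s}\,\dd v'\,\dd v\\
&\ge \frac\lambda2 \iint \bigl(f'-f\bigr)^2 |v'-v|^{-d-2s}\,\dd v'\,\dd v \;-\; \bar\Lambda \int f^2(v)\,\dd v,
\end{align*}
where the second inequality is \eqref{e:K-coercivity} applied with radius $4\bar R$, and the last one uses that $f$ vanishes outside $B_{\bar R}$, so the portion of the $\dot H^s$ form missed by $B_{2\bar R}\times B_{2\bar R}$ reduces to tail integrals of $|v'-v|^{-d-2s}$ over $|v'-v|\gtrsim \bar R$, giving $\bar\Lambda\simeq \bar R^{-2s}$. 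With this G\aa rding inequality in hand (and the remaining conditions of \cite{imbert2016weak}, which do follow directly from Definition \ref{d:class-of-kernels}), the citation goes through and the rest of your outline is fine. Without it, ``the hypotheses are exactly ours'' is an unjustified, and in fact false, assertion.
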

%----------------------------
\begin{remark}
  Note that Theorem~\ref{t:weak-harnack} and its corollaries below
  hold for several different choices of the distance function. See
  Section \ref{s:discussion}.
\end{remark}
\begin{proof}
  The coercivity conditions in the definition of the class
  $\mathcal{K}$ of elliptic kernels slightly differ from the
  coercivity condition imposed in \cite{imbert2016weak}. Let
  $f: \R^d \to \R$ be supported in some ball $B_{\bar R}$.
  \begin{align*}
    - \int f L_v f \dd v  & = \frac12 \iint (f'-f)^2 K (v,v') \dd v'\\
                          & \ge \frac12 \iint_{B_{4 \bar R} \times B_{4 \bar R}}  (f'-f)^2 K (v,v') \dd v'\\
                          & \ge \frac\lambda2  \iint_{B_{2 \bar R} \times B_{2 \bar R}}  (f'-f)^2 |v-v'|^{-d-2s} \dd v'\\
                          & \ge \frac\lambda2  \iint  (f'-f)^2 |v-v'|^{-d-2s} \dd v'
                            - \int_{B_{\bar R}} f^2 (v) \left\{ \int_{\R^d \setminus B_{2 \bar R}} K (v,v') \dd v' \right \} \dd v \\
        & \ge \frac\lambda2  \iint  (f'-f)^2 |v-v'|^{-d-2s} \dd v' - \bar \Lambda \int f^2 (v)  \dd v
  \end{align*}
  with $\bar \Lambda \simeq \bar R^{-2s}$. 
  The other conditions from \cite{imbert2016weak} are satisfied straightforwardly from our assumptions in Definition \ref{d:class-of-kernels}.
\end{proof}

Note that the right hand side depends on the $L^\infty$ norm of $f$
with respect to all values of $v \in \R^d$. This is a common
inconvenience with nonlocal equations. The result can be easily
improved to allow functions $f$ that are unbounded as
$|v| \to \infty$. Let $\omega_0$ be the majorant function as in
\eqref{e:majorant}, centered at the origin. That is
\[ \omega_0(t,x,r) = \sup_{v \in B_{2r} \setminus B_{r/2}} f(t,x,v).\]
We derive the following improvement of Theorem \ref{t:weak-harnack}.
%----------------
\begin{cor}
  Let $\LL$ be an integral operator corresponding to a kernel in the
  class $\K$ (as in Definition~\ref{d:class-of-kernels}).
  $f: (-1,0] \times B_1 \times \R^d \to \R$ be a function that solves
  the equation \eqref{e:kinetic-constant} in $Q_1$.  Then, the
  following estimate holds
  \[ [f]_{C_\ell^\delta(Q_{1/2})} \leq C \left( \|f\|_{L^\infty(Q_1)} +
      \|c\|_{L^\infty(Q_1)} + \sup_{t \in (-1,0], x \in B_1}
      \int_{1/2}^\infty \omega_0(t,x,r) r^{-1-2s} \dd r \right).\] Here
  $\delta>0$ and $C$ are constants depending only on dimension and the
  parameters $\lambda$ and $\Lambda$ of Definition
  \ref{d:class-of-kernels}.
\end{cor}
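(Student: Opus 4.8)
The plan is to reduce to Theorem~\ref{t:weak-harnack} by truncating $f$ in the velocity variable: the diffusion $\LL$ acts only on $v$, so a cutoff depending on $v$ alone commutes with the transport operator $\partial_t+v\cdot\nabla_x$, and the truncated function solves \eqref{e:kinetic-constant} with an extra forcing term coming from $\LL$ applied to the discarded part of $f$. That extra term lives far from the diagonal in velocity and will be controlled by the tail bound of Lemma~\ref{l:tail}. By the mollification argument preceding Theorem~\ref{t:weak-harnack}, we may assume throughout that $f$ is smooth, so that $\LL f$, and the corresponding quantities for the truncated pieces, are classically defined (their tails being finite by the hypothesis $\int_{1/2}^\infty\omega_0(t,x,r)\,r^{-1-2s}\dd r<\infty$).

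First I would fix a smooth cutoff $\chi:\R^d\to[0,1]$ with $\chi\equiv1$ on $B_2$ and $\chi\equiv0$ outside $B_3$, and write $f=g+h$ with $g:=\chi(v)f$ and $h:=(1-\chi(v))f$, so that $h(t,x,\cdot)$ vanishes on $B_2$. On $Q_1$ one has $v\in B_1$, hence $\chi(v)=1$, $g=f$, and $\partial_t g+v\cdot\nabla_x g=\partial_t f+v\cdot\nabla_x f=\LL f+c=\LL g+\LL h+c$; thus $g$ solves \eqref{e:kinetic-constant} in $Q_1$ with forcing $c+\LL h$. The function $g$ is bounded on $(-1,0]\times B_1\times\R^d$: it is supported in $\{v\in B_3\}$, where $|g|\le|f|$; for $v\in B_1$ one uses $\|f\|_{L^\infty(Q_1)}$, while for $1\le|v^*|\le3$ the elementary observation that $|f(t,x,v^*)|\le\omega_0(t,x,r)$ for every $r\in[|v^*|/2,2|v^*|]\subset[1/2,\infty)$ gives, upon integrating against $r^{-1-2s}$, that $|f(t,x,v^*)|\lesssim\int_{1/2}^\infty\omega_0(t,x,r)\,r^{-1-2s}\dd r$. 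Hence $\|g\|_{L^\infty((-1,0]\times B_1\times\R^d)}\lesssim\|f\|_{L^\infty(Q_1)}+\sup_{t\in(-1,0],\,x\in B_1}\int_{1/2}^\infty\omega_0(t,x,r)\,r^{-1-2s}\dd r$.

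The core estimate is the bound on $\LL h$ over $Q_1$. For $(t,x,v)\in Q_1$ we have $h(t,x,v)=0$, so $\LL h(t,x,v)=\int_{\R^d}h(t,x,v+w)\,K(w)\dd w$; since $h(t,x,\cdot)$ is supported outside $B_2$ while $|v|<1$, the integrand vanishes for $|w|\le1$, and Lemma~\ref{l:tail} applies with $R=1$ and center $v$, giving $|\LL h(t,x,v)|\lesssim\Lambda\int_{1/2}^\infty\omega^h_v(t,x,r)\,r^{-1-2s}\dd r$, where $\omega^h_v(t,x,r)=\sup\{|h(t,x,u)|:u\in B_{2r}(v)\setminus B_{r/2}(v)\}\le\omega^f_v(t,x,r)$, the majorant of $f(t,x,\cdot)$ centered at $v$. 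Because $|v|<1$, one compares $\omega^f_v$ with the majorant $\omega_0$ centered at the origin: for $r$ in a bounded range $B_{2r}(v)$ is a fixed ball, handled as in the previous paragraph, and for $r$ large $B_{2r}(v)\setminus B_{r/2}(v)\subset B_{3r}\setminus B_{r/3}\subset\bigcup_{\rho\in[r/6,3r]}\big(B_{2\rho}\setminus B_{\rho/2}\big)$, which after a routine dyadic summation in $r$ yields $\int_{1/2}^\infty\omega^f_v(t,x,r)\,r^{-1-2s}\dd r\lesssim\|f\|_{L^\infty(Q_1)}+\int_{1/2}^\infty\omega_0(t,x,r)\,r^{-1-2s}\dd r$. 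Therefore $\|c+\LL h\|_{L^\infty(Q_1)}\lesssim\|c\|_{L^\infty(Q_1)}+\Lambda\big(\|f\|_{L^\infty(Q_1)}+\sup_{t\in(-1,0],\,x\in B_1}\int_{1/2}^\infty\omega_0(t,x,r)\,r^{-1-2s}\dd r\big)$.

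Finally, applying Theorem~\ref{t:weak-harnack} to $g$, which solves \eqref{e:kinetic-constant} in $Q_1$, gives $[g]_{C_\ell^\delta(Q_{1/2})}\le C\big(\|g\|_{L^\infty((-1,0]\times B_1\times\R^d)}+\|c+\LL h\|_{L^\infty(Q_1)}\big)$, which by the two preceding paragraphs is at most a constant (depending on $d$, $\lambda$, $\Lambda$) times $\|f\|_{L^\infty(Q_1)}+\|c\|_{L^\infty(Q_1)}+\sup_{t\in(-1,0],\,x\in B_1}\int_{1/2}^\infty\omega_0(t,x,r)\,r^{-1-2s}\dd r$. Since $\chi\equiv1$ on $B_2\supset B_{1/2}$ we have $g=f$ on $Q_{1/2}$, so this is exactly the claimed bound for $[f]_{C_\ell^\delta(Q_{1/2})}$. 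I expect the only genuinely delicate point to be the dyadic comparison of the shifted majorant $\omega^f_v$ with $\omega_0$ and keeping the finitely many fixed radii consistent; everything else is a direct application of Lemma~\ref{l:tail} and Theorem~\ref{t:weak-harnack}.
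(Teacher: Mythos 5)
Your argument is correct and follows essentially the same route as the paper: you localize $f$ in $v$ with a smooth cutoff (your $\chi f$ playing the role of the paper's $\eta f$), observe that the truncated function solves \eqref{e:kinetic-constant} in $Q_1$ with the discarded tail appearing as an extra forcing term, control that term via Lemma~\ref{l:tail} together with a comparison of the shifted majorant $\omega_v$ with $\omega_0$, and then apply Theorem~\ref{t:weak-harnack}. The only differences are cosmetic choices of cutoff radii and bookkeeping constants.
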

%----------------
\begin{proof}
  We consider a $C^\infty$ function $\eta : \R^d \to [0,1]$ so that
  $\eta(v) = 1$ if $v \in B_{3/2}$ and $\eta(v)=0$ when
  $v \notin B_2$. We apply Theorem \ref{t:weak-harnack} to the
  localized function $\tilde f(t,x,v) = f(t,x,v) \eta(v)$. We must
  analyze the equation that $\tilde f$ satisfies. We compute directly
  $\LL \tilde f$ to get
\[ \tilde f_t + v \cdot \nabla_x \tilde f - \LL \tilde f = \tilde{c} - h(t,x,v) \qquad \text{in } Q_1,\]
where $\tilde{c} = \eta c$ and 
\[ h(t,x,v) = \int_{\R^d \setminus B_{3/2}} f(t,x,v') (\eta(v') - 1) K(t,x,v,v') \dd v'.\]
From Theorem \ref{t:weak-harnack}, we have
\begin{equation} \label{e:wh1}
 \|f\|_{C_\ell^\delta(Q_{1/2})} = \|\tilde f\|_{C_\ell^\delta(Q_{1/2})} \leq C \left( \|\tilde f\|_{L^\infty((-1,0] \times B_1 \times \R^d)} + \|c\|_{L^\infty(Q_1)} + \|h\|_{L^\infty(Q_1)} \right).
\end{equation}
It is easy to see that
\[  \|\tilde f\|_{L^\infty((-1,0] \times B_1 \times \R^d)}  \leq \|\tilde f\|_{L^\infty((-1,0] \times B_1 \times B_2)}  \leq \|f\|_{L^\infty(Q_1)} + C \sup_{t,x} \int_1^2 \omega_0(r) r^{-1-2s} \dd r.\]
It only remains to prove that we have for any $v \in B_1$,
\[ |h(t,x,v)| \leq \int_{|v'|>3/2} f(t,x,v') K(t,x,v,v') \dd v' \leq \|f\|_{L^\infty(Q_1)} + C \int_{1/2}^\infty \omega_0(r) r^{-1-2s} \dd r.\]
Let us justify this inequality. Arguing as in Lemma~\ref{l:tail}, we have
  \begin{align*}
    \int_{|v'|>3/2} f(t,x,v') K(t,x,v,v') \dd v' & \leq \int_{3/2 < |v'| <7} f(t,x,v') K(t,x,v,v') \dd v' + \int_{|v'|>7} f(t,x,v') K(t,x,v,v') \dd v' \\
                                                 & \le \Lambda \|f(t,x,\cdot) \|_{L^\infty (B_7 \setminus B_{3/2} )} + \int_{|v'-v| > 6} f(t,x,v') K(t,x,v,v') \dd v'.
  \end{align*}
  Moreover,
  \begin{align*}
    \int_{|v'-v| > 6} f(t,x,v') K(t,x,v,v') \dd v' & \leq \int_{|v'-v|>6} \left\{ \inf_{3r/4 < |v'-v| < 3r/2} \omega_0 (r) \right\} K(t,x,v,v') \dd v' \\
    & \lesssim \int_4^{+\infty} \frac{\omega_0(r)}{r^{1+2s}} \dd r.
    \end{align*}
    We also have
    \[ \|f(t,x,\cdot)\|_{L^\infty (B_7 \setminus B_{3/2})} \lesssim \int_{3/4}^{14} \omega_0 (r) \dd r \lesssim \int_{1/2}^{+\infty} \omega_0 (r) r^{-1-2s} \dd r. \]
    This achieves the proof of the corollary.
\end{proof}

For convenience, we also state the scaled version of the previous result.
%----------------------------------------------------------------------------------
\begin{cor} \label{c:scaled-weak-harnack}
  Let $\LL$ be an integral operator corresponding to a kernel in the class $\K$ (as in Definition~\ref{d:class-of-kernels}) and 
  $f: (-R^{2s},0] \times B_{R^{1+2s}} \times \R^d$  be a function that solves the equation \eqref{e:kinetic-constant} in $Q_R$.
Then, the following estimate holds
\[ [f]_{C_\ell^\delta(Q_{R/2})} \leq C \left( R^{-\delta} \|f\|_{L^\infty(Q_R)} + R^{2s-\delta} \|c\|_{L^\infty(Q_R)}
    + R^{2s-\delta} \sup_{t \in (-R^{2s},0], x \in B_{R^{1+2s}}} \int_{R/2}^\infty \omega_0(t,x,r) r^{-1-2s} \dd r \right).\]
Here $\delta>0$ and $C$ are constants depending only on dimension and the parameters $\lambda$ and $\Lambda$ of Definition \ref{d:class-of-kernels}.
\end{cor}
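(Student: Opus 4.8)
The plan is to deduce Corollary~\ref{c:scaled-weak-harnack} from the previous corollary by the natural change of variables that reflects the kinetic scaling $S_R(t,x,v)=(R^{2s}t,R^{1+2s}x,Rv)$. First I would set $g(z) := f(S_R z) = f(R^{2s}t, R^{1+2s}x, Rv)$, so that $g$ is defined on $(-1,0]\times B_1 \times \R^d$ and, by the scaling invariance of the class of equations noted in Section~\ref{ss:kd}, $g$ solves an equation of the form \eqref{e:kinetic-constant} in $Q_1$ with a rescaled kernel $\tilde K \in \K$ (the ellipticity constants $\lambda,\Lambda$ are preserved because the bounds \eqref{e:K-upper-bound}, \eqref{e:K-coercivity}, \eqref{e:K-nondeg} are homogeneous of the correct degree under $S_R$) and a rescaled source $\tilde c(z) = R^{2s} c(S_R z)$. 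The factor $R^{2s}$ in front of $c$ comes from the fact that the operator $f_t + v\cdot\nabla_x f - \LL f$ has kinetic degree $2s$, so applying $S_R$ multiplies it by $R^{-2s}$.

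Next I would apply the unscaled Corollary to $g$, which gives
\[
[g]_{C_\ell^\delta(Q_{1/2})} \leq C\left( \|g\|_{L^\infty(Q_1)} + \|\tilde c\|_{L^\infty(Q_1)} + \sup_{t\in(-1,0],\,x\in B_1}\int_{1/2}^\infty \tilde\omega_0(t,x,r)\, r^{-1-2s}\dd r\right),
\]
where $\tilde\omega_0$ is the majorant function of $g$ centered at the origin. Then I would translate each term back to $f$. Since $d_\ell(S_R z_1, S_R z_2) = R\, d_\ell(z_1,z_2)$, the seminorm scales as $[g]_{C_\ell^\delta(Q_{1/2})} = R^\delta [f]_{C_\ell^\delta(Q_{R/2})}$, which produces the factors $R^{-\delta}$, $R^{2s-\delta}$, $R^{2s-\delta}$ on the right-hand side after dividing by $R^\delta$. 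For the $L^\infty$ terms this is immediate: $\|g\|_{L^\infty(Q_1)} = \|f\|_{L^\infty(Q_R)}$ and $\|\tilde c\|_{L^\infty(Q_1)} = R^{2s}\|c\|_{L^\infty(Q_R)}$.

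The only term requiring a small computation is the tail term. By definition $\tilde\omega_0(t,x,r) = \sup_{v\in B_{2r}\setminus B_{r/2}} g(t,x,v) = \sup_{v\in B_{2r}\setminus B_{r/2}} f(R^{2s}t, R^{1+2s}x, Rv)$, so changing variables $v\mapsto v/R$ gives $\tilde\omega_0(t,x,r) = \omega_0(R^{2s}t, R^{1+2s}x, Rr)$. Substituting $r' = Rr$ in the integral, $\int_{1/2}^\infty \tilde\omega_0(t,x,r)\,r^{-1-2s}\dd r = R^{2s}\int_{R/2}^\infty \omega_0(R^{2s}t,R^{1+2s}x,r')\,(r')^{-1-2s}\dd r'$, and taking the supremum over $t\in(-1,0]$, $x\in B_1$ is the same as taking it over $t\in(-R^{2s},0]$, $x\in B_{R^{1+2s}}$. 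Combining everything and dividing through by $R^\delta$ yields exactly the stated inequality. I do not expect any genuine obstacle here; the only point to be careful about is bookkeeping the scaling exponents $2s$ and $1+2s$ consistently, and checking that the rescaled kernel indeed stays in $\K$ with the same constants — both of which are routine given the homogeneity already recorded in Section~\ref{ss:kd}.
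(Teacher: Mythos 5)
Your proposal is correct and is exactly the intended argument: the paper states this corollary as "the scaled version of the previous result," i.e.\ its proof is precisely the change of variables $g(z)=f(S_Rz)$, the observation that the rescaled kernel stays in $\K$ with the same constants, an application of the unscaled corollary to $g$, and the bookkeeping of the factors $R^{\delta}$, $R^{2s}$ and the substitution $r\mapsto Rr$ in the tail integral, all of which you carry out correctly. Nothing further is needed.
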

%-------------------------------------------------------------------------------

\section{Liouville theorem}
\label{s:liouville}

This section is devoted to the statement and the proof of a theorem of Liouville type. 
% -----------------------------------------------
\begin{thm}[Liouville] \label{t:liouville} Let
  $0 < \gamma < \min(1,2s)$ and $\alpha = \frac{2s}{1+2s} \gamma$.
  Assume that $\lfloor 2s+\alpha \rfloor < 2s+\alpha' < 2s + \alpha$
  and $ \alpha - \alpha'< \delta$  where $\delta$ is the constant
  from Theorem~\ref{t:weak-harnack}.

Let $f \in C_{\ell,loc}^{2s+\alpha'}((-\infty,0] \times \R^d \times \R^d)$ be a function that satisfies the following conditions.
\begin{itemize}
\item[(i)]  There is a constant $C_1 > 0$ such that for all  $R \geq 1$,
  \[
    [f]_{C_\ell^\beta(Q_R)} \leq C_1 R^{2s+\alpha-\beta}  \qquad \text{ for all } \beta \in [0,2s+\alpha'].
  \]
\item[(ii)] For any $\xi = (h,y,w) \in \R^{1+2d}$, with $h< 0$, we define
\( g(z) := f(\xi \circ z) - f(z)\) or equivalently, 
\( g(t,x,v) := f(t+h,x+y+ tw,v+w) - f(t,x,v).\)
Then, $g$ solves the equation
\begin{equation} \label{e:L-eq-for-g}
 g_t + v \cdot \nabla_x g = \LL g \text{ in } (-\infty,0] \times \R^d \times \R^d 
\end{equation}
where $\LL$ is the operator associated to some kernel $K \in \K$ as defined in \eqref{e:L}.
\end{itemize}
Then $f$ is a polynomial of kinetic degree  smaller than $2s+\alpha$.
\end{thm}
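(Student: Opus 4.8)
The plan is to exploit condition (ii) in a bootstrap fashion: the increments $g = f(\xi\circ\cdot) - f$ solve the nice translation-invariant equation \eqref{e:L-eq-for-g}, so we can apply the interior H\"older estimate of Theorem~\ref{t:weak-harnack} (and its scaled Corollary~\ref{c:scaled-weak-harnack}) to $g$ on large cylinders. First I would fix $\xi = (h,y,w)$ with $h<0$ and estimate the $C_\ell^\delta$ norm of $g$ on $Q_{R/2}$ using Corollary~\ref{c:scaled-weak-harnack}. The right-hand side involves $\|g\|_{L^\infty(Q_R)}$, and a tail term $\int_{R/2}^\infty \omega_0(r) r^{-1-2s}\,dr$; using the growth bound (i) applied with $\beta = 0$, together with the fact that $g$ is a \emph{first difference} of $f$, one gets $\|g\|_{L^\infty(Q_R)} \lesssim C_1 \|\xi\|^{\,\min(1,2s+\alpha)} R^{2s+\alpha - \min(1,2s+\alpha)}$ — i.e.\ $g$ grows strictly slower than $f$, with a power of $R$ that is $2s+\alpha$ minus something positive. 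Feeding this into the scaled estimate gives $[g]_{C_\ell^\delta(Q_{R/2})} \lesssim R^{-\delta}\,\|g\|_{L^\infty(Q_R)} + (\text{tails}) \lesssim C_1 \|\xi\|^{\nu} R^{2s+\alpha-\delta-\nu}$ for some $\nu>0$. Letting $R\to\infty$, since $2s+\alpha-\delta-\nu$ could still be positive, this alone does not kill $g$; instead one reinterprets the bound as an improved growth control and iterates.

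The cleaner way to organize the iteration is a two-step argument as in the paper's own description. \textbf{Step 1: $f$ does not depend on $x$.} Here I would specialize (ii) to $\xi = (0, y, 0)$ — a pure translation in $x$ — which is \emph{not} allowed directly since $h<0$ is required, so instead take $\xi = (h,y,0)$ and $\xi' = (h,0,0)$ and consider $g - g'$, or take a difference quotient $\xi_s = (-s^{1+2s}\varepsilon, y, 0)$ and let the time-part shrink; in any case one produces (in the limit) a function that is an $x$-increment of $f$, solves \eqref{e:L-eq-for-g}, and by (i) has growth of order $R^{2s+\alpha - (1+2s)}$ coming from the $x$-direction scaling. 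Since $\alpha < 1 < 1+2s$, this exponent is negative, so applying Theorem~\ref{t:weak-harnack} on $Q_R$ and sending $R\to\infty$ forces this $x$-increment to be constant, hence zero once we subtract the value; doing this for a basis of increments shows $\partial_{x_i} f \equiv 0$, i.e.\ $f = f(t,v)$.

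\textbf{Step 2: conclude $f$ is a polynomial.} Once $f$ is independent of $x$, the equation \eqref{e:L-eq-for-g} for $g(t,v) = f(t+h,v+w) - f(t,v)$ is a genuine (spatially homogeneous) kinetic equation in $(t,v)$ only, and the three candidate distances coincide. Now I would run the increment-and-growth argument to the end: take successively higher-order increments of $f$. An increment of kinetic degree $k$ (meaning applying $\lceil k/\,\cdot\rceil$ difference operators, each reducing the growth exponent by the kinetic degree of the corresponding direction) produces a function that still solves \eqref{e:L-eq-for-g} (since the equation is linear and translation-invariant) and, by (i), grows like $R^{2s+\alpha-k}$. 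As soon as $k \geq 2s+\alpha$ this exponent is negative, and by Theorem~\ref{t:weak-harnack} such a solution on all of $(-\infty,0]\times\R^d$ must be constant — but a nonconstant bounded entire solution of the fractional kinetic heat-type equation with the growth $o(R^{\varepsilon})$ is excluded, so in fact it is zero. Having all sufficiently high increments vanish means $f$ is a polynomial in $(t,v)$, and the growth bound (i) with $\beta$ slightly below $2s+\alpha$ pins down its kinetic degree to be $< 2s+\alpha$. One must be a little careful that "increment vanishes" upgrades to "derivative exists and vanishes"; since $f \in C_{\ell,loc}^{2s+\alpha'}$ with $\lfloor 2s+\alpha\rfloor < 2s+\alpha'$, $f$ has enough classical derivatives in $v$ (and in the combined operator $\partial_t + v\cdot\nabla_x$, which here is just $\partial_t$) for the incremental quotients to converge to genuine derivatives.

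\textbf{Main obstacle.} The delicate point is Step~1 and, more generally, handling the fact that condition (ii) only gives us the equation for increments with $h<0$ (strictly backward in time), so pure spatial or pure velocity increments are obtained only as limits; one has to check that the limiting increment still satisfies \eqref{e:L-eq-for-g} for \emph{some} kernel in $\K$, which uses the compactness of $\K$ under weak-$\ast$ limits (Lemma~\ref{l:weak-limit-compactness}, Lemma~\ref{l:weak-limit-stay-in-class}) and the stability of the operators (Lemma~\ref{l:weak-limit-stability}) — and for the latter the hypotheses of that lemma, in particular the uniform $C_\ell^{2s+\eta}$ bound and the tail majorant, must be verified using (i). Getting the bookkeeping of scaling exponents right — so that every increment genuinely lowers the growth power and the final vanishing kicks in exactly when the kinetic degree reaches $2s+\alpha$ — is the part that requires the most care.
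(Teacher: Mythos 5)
Your overall architecture (first show independence of $x$, then reduce to a polynomial in $(t,v)$ by differencing/differentiating, using the equation for increments together with the interior H\"older estimate) matches the paper's, and your concerns about $h<0$ and about passing kernels to the limit are legitimate. However, there is a genuine gap in the quantitative mechanism that drives the whole argument: you repeatedly claim that a first difference of $f$ has growth exponent reduced by the \emph{full} kinetic degree of the direction of the shift (e.g.\ growth $R^{2s+\alpha-(1+2s)}<0$ for an $x$-increment in Step~1, and ``as soon as $k\ge 2s+\alpha$ the exponent is negative'' in Step~2). Hypothesis (i) cannot give this: it only controls $[f]_{C_\ell^\beta(Q_R)}$ for $\beta\le 2s+\alpha'$, so the best bound for a fixed increment is obtained with $\beta=2s+\alpha'$ and reads $\|f(\cdot\,\circ\,\xi)-f\|_{L^\infty(Q_R)}\lesssim C_1\,\|\xi\|^{\,\cdot}\,R^{\alpha-\alpha'}$, whose exponent $\alpha-\alpha'$ is strictly positive. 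No amount of further differencing can push the exponent below zero, since the total gain from differencing is capped by the available regularity $2s+\alpha'<2s+\alpha$. Consequently your step ``apply Theorem~\ref{t:weak-harnack} on $Q_R$ and send $R\to\infty$ to force the increment to be constant'' does not close as stated: the increments are not bounded, only slowly growing.

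What is missing is precisely the growth-tolerant Liouville lemma (Lemma~\ref{l:liouville-baby} in the paper): a solution of the constant-coefficient equation on $(-\infty,0]\times\R^d\times\R^d$ with growth $R^\beta$, $\beta<\delta$, must be constant; this is proved from the scaled estimate of Corollary~\ref{c:scaled-weak-harnack} by letting $R\to\infty$ and using the H\"older gain $\delta$. The residual positive exponent $\alpha-\alpha'$ is exactly why the theorem assumes $\alpha-\alpha'<\delta$ — a hypothesis your proposal never actually uses. The paper's proof combines this lemma with correct bookkeeping: increments of $f$ in $x$ have growth $R^{\alpha-\alpha'}$ (Step~1); then one passes to genuine derivatives $f_t$ and $\partial_{v_j}\tilde f$ (legitimate since $f\in C_{\ell,loc}^{2s+\alpha'}$ and $\lfloor 2s+\alpha\rfloor<2s+\alpha'$, via Lemma~\ref{l:holder-derivatives}), because differentiation — unlike differencing — does lower the growth exponent by the full kinetic degree, and only the final first difference of such a derivative, again with exponent $\alpha-\alpha'<\delta$, is fed into the baby Liouville lemma. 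Repairing your proof requires inserting this lemma and replacing your ``increments of kinetic degree $k$'' iteration by the derivative-then-difference scheme.
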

%----------------------------------------------------------
\begin{remark}
  Note that the assumption (i) ensures that the tails of $\LL g$ are
  integrable.  Indeed, let us take $\beta=\gamma+\eps < \min(1,2s)$
  for $\eps$ small. The assumption (i) tells us that
\begin{equation} \label{e:L-g1}
 |g(z)| \leq C \|z^{-1} \circ \xi \circ z\|^\beta \|z\|^{2s+\alpha-\beta} .
\end{equation}
Note that the condition $\beta<\min(1,2s)$ ensures that the polynomial $q_z$ in the definition of $[f]_{C_\ell^\alpha(z)}$ is the constant $q_z(\xi) = f(z)$.

Observe that for $z= (t,x,v)$ and $\xi = (h,y,w)$, we have
\begin{eqnarray}\label{e:commute}
  \| z^{-1} \circ \xi \circ z \| & =& \| (h,y+tw-h v,w)\| \leq \| \xi\| + |(tw-hv)|^{1/(1+2s)} \\
  \nonumber &\leq & C_\xi (1+ |t|+|v|)^{1/(1+2s)}.
\end{eqnarray}
Recalling that $\beta = \gamma +\eps$, we get

\begin{equation} \label{e:L-majorant}
\begin{aligned}
 g(t,x,v) &\leq C_{t,x,\xi} (1 + |v|)^{(\gamma+\eps)/(1+2s) + 2s + \alpha - \gamma - \eps}, \\
&=  C_{t,x,\xi} (1 + |v|)^{2s (1- \eps/(1+2s))} =: \omega(|v|).
\end{aligned}
\end{equation}

The operator $\LL g$ is well defined because this function $\omega$
suffices to bound the expression \eqref{e:L-computable}. The constant
$C_{t,x,\xi}$ depends on $t$, $x$, $\xi$, and the constant in the
assumption (i) with $\beta = \gamma+\eps$.

The tails of $\LL f$ may not be integrable, and therefore we can only
make sense of the equation for $g$, and not for $f$.
\end{remark}
\begin{remark}
  It is plausible that a version of this Liouville type result holds
  also for higher values of $\alpha$. In that case, for the equation
  \eqref{e:L-eq-for-g} to make sense, we would have to make $g$ a
  higher order incremental quotient of $f$.
\end{remark}

We start with a simpler Liouville type result that is a consequence of
the H\"older estimate contained in Theorem \ref{t:weak-harnack}.
% ----------------------
\begin{lemma}[Liouville] \label{l:liouville-baby} Let $\delta$ be the
  constant from Theorem \ref{t:weak-harnack}. Assume $\beta<\delta$
  and $f$ is a solution to \eqref{e:kinetic-constant} in
  $(-\infty,0] \times \R^d \times \R^d$ with $c=0$. Assume further
  that for all $R \geq 1$,
\[ \|f\|_{L^\infty(Q_R)} \leq C R^\beta,\]
then $f$ is constant. 
\end{lemma}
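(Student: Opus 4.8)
The plan is to run a standard Liouville argument based on rescaling and the interior De~Giorgi--type H\"older estimate, letting the scaling parameter tend to infinity. Recall $\delta$ is the (small) H\"older exponent of Theorem~\ref{t:weak-harnack}; we may and do take it smaller than $\min(1,2s)$, so in particular $\beta<2s$. Then the polynomial in Definition~\ref{d:holder-space} associated with the exponent $\delta$ is simply the constant equal to the value of $f$ at the base point, so that $[f]_{C_\ell^\delta(Q_R)}$ is precisely the best constant in the inequality $|f(z)-f(z')|\le C\,d_\ell(z,z')^\delta$ for $z,z'\in Q_R$. It therefore suffices to prove that $[f]_{C_\ell^\delta(Q_R)}\to 0$ as $R\to\infty$: given any two points $z_1,z_2\in(-\infty,0]\times\R^d\times\R^d$, for $R$ large both belong to $Q_R$, and then $|f(z_1)-f(z_2)|\le [f]_{C_\ell^\delta(Q_R)}\,d_\ell(z_1,z_2)^\delta\to0$, forcing $f$ to be constant.

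To obtain the decay of the seminorm, I would apply the scaled weak Harnack estimate, Corollary~\ref{c:scaled-weak-harnack}, to $f$ on the cylinder $Q_{2R}$ (equivalently, one may rescale $f$ by $z\mapsto R^{-\beta}f(S_R z)$ and apply the unscaled estimate on $Q_1$). Since $f$ solves \eqref{e:kinetic-constant} with $c=0$, the term involving $c$ drops, and two quantities remain to be estimated. The local one is immediate from the hypothesis: $\|f\|_{L^\infty(Q_{2R})}\le C(2R)^\beta$. The tail term is where the growth hypothesis must be used at every scale: for $(t,x)$ with $|t|<(2R)^{2s}$ and $|x|<(2R)^{1+2s}$ and for $r\ge R$, any $v$ with $r/2\le|v|\le 2r$ makes $(t,x,v)$ sit inside the centred cylinder $Q_{3r}$, so the majorant $\omega_0$ from \eqref{e:majorant} satisfies $|\omega_0(t,x,r)|\le C(3r)^\beta\lesssim r^\beta$, whence
\[ \sup_{|t|<(2R)^{2s},\,|x|<(2R)^{1+2s}}\int_R^\infty|\omega_0(t,x,r)|\,r^{-1-2s}\dd r\lesssim\int_R^\infty r^{\beta-1-2s}\dd r\lesssim R^{\beta-2s}, \]
using $\beta<2s$. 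Feeding these two bounds into Corollary~\ref{c:scaled-weak-harnack} with radius $2R$ yields
\[ [f]_{C_\ell^\delta(Q_R)}\lesssim(2R)^{-\delta}(2R)^\beta+(2R)^{2s-\delta}R^{\beta-2s}\lesssim R^{\beta-\delta}. \]

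Since $\beta<\delta$, the right-hand side tends to $0$ as $R\to\infty$, which completes the proof by the reduction in the first paragraph. I do not expect a genuine obstacle here: this is a soft consequence of the a~priori H\"older estimate and its homogeneity under the kinetic scaling $S_R$. The only point needing a little care is the tail bookkeeping, i.e.\ checking that the growth control $\|f\|_{L^\infty(Q_\rho)}\le C\rho^\beta$, assumed only on cylinders centred at the origin, still bounds $\omega_0(t,x,r)$ at base points $(t,x)$ shifted away from the origin and at scales $r\gtrsim R$; this works precisely because such a point $(t,x,v)$ still lies in a centred cylinder of comparable radius.
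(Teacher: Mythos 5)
Your argument is correct and is essentially the paper's own proof: apply the scaled local H\"older estimate (Corollary \ref{c:scaled-weak-harnack}) at scale comparable to $R$, use the growth hypothesis to bound both $\|f\|_{L^\infty}$ and the tail majorant $\omega_0$ (giving the $R^{\beta-2s}$ contribution), obtain $[f]_{C_\ell^\delta(Q_R)}\lesssim R^{\beta-\delta}\to 0$, and conclude that $f$ is constant. Your extra bookkeeping (checking that centred growth controls $\omega_0$ at shifted base points, and the reduction from vanishing seminorm to constancy) just makes explicit steps the paper leaves implicit.
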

%-------------------------
\begin{proof}
  We apply Corollary \ref{c:scaled-weak-harnack} in $Q_R$ and make
  $R \to 0$. From our assumption on the growth of $f$, for all
  $(t,x) \in (-R^{2s},0] \times B_{R^{1+2s}}$ we have that
  $\omega(t,x,r) \lesssim r^\beta$. Thus, we have
\[ \int_{R/2}^\infty \omega(t,x,r) r^{-1-2s} \dd r \lesssim R^{\beta-2s}.\]
Then, Corollary \ref{c:scaled-weak-harnack} tells us that
\[ [f]_{C_\ell^\delta(Q_{R/2})} \leq C R^{\beta-\delta}.\] Taking
$R \to \infty$, the semi-norm $[f]_{C_\ell^\delta(Q_R)}$ converges to zero,
and then the function $f$ must be constant.
\end{proof}

\begin{proof}[Proof of Theorem~\ref{t:liouville}]
  We first claim that it is enough to prove the result assuming that
  $f \in C^\infty$. Indeed, if $f$ is less regular, we can mollify it
  respecting the Lie group structure then apply the result to the
  approximate function and pass to the limit.

  The remainder of the proof proceeds in several steps. \medskip
  
\noindent{\sc Step 1: $f$ is constant in $x$.}
Let $y \in \R^d$ and $g(t,x,v) := f(t,x-y,v) - f(t,x,v)$. We apply the assumption (i) with $\beta = 2s+\alpha'$. Note that $2s+\alpha' < 1+2s$ by assumption. We get
\begin{equation} \label{e:L-fx1}
 \|g \|_{L^\infty(Q_R)} \leq C_1 |y|^{(2s+\alpha') / (1+2s)} R^{\alpha-\alpha'}.
\end{equation}
Since we assume that $\alpha - \alpha' < \delta$, then we can apply Lemma \ref{l:liouville-baby} and we get that $g$ is constant. Therefore, $f$ must be of the form
\[ f(t,x,v) = a \cdot x + f(t,v),\]
for some constant $a \in \R^d$. However, the assumption (i)  tells us that for all $R \geq 1$,
\[ \osc_{Q_R}  f = [f]_{C^0 (Q_R)} \leq C_1 R^{2s+\alpha}.\]
This is only possible if $a = 0$ (recall that $a \cdot x$ is a polynomial of order $1+2s > 2s+\alpha$). Thus, $f$ is independent of $x$ and from now on we write $f = f(t,v)$.
\medskip

\noindent{\sc Step 2: $f_t$ is constant}.
Observe that the kinetic order of $\partial_t$ is $2s$. Therefore, $f_t$ is well defined since $f \in C_{\ell,loc}^{2s+\alpha'}$. Moreover, from the assumption (i) and Lemma~\ref{l:holder-derivatives},
we deduce that,
\begin{equation} \label{e:L-ft1}
  \begin{cases}
    \| f_t \|_{L^\infty (Q_R)} \le C_1 R^{\alpha} \\
    [f_t ]_{C_\ell^{\tilde{\beta}}(Q_R)} \leq C_1 R^{\alpha-\tilde{\beta}} \qquad \text{ for all } \tilde{\beta} \in (0,\alpha'].
    \end{cases}
\end{equation}
Since $f_t(t,v) = \lim_{h \to 0} (f(t+h,v) - f(t,v)))/h$, using (ii) we deduce that
\[ \partial_t(f_t) = \LL f_t.\]
We omitted the term $v \cdot \nabla_x f_t$ because it is identically zero.

Using the invariance of the equation by the Lie group action and the
fact that $f$ is independent of $x$, we have that for any
$(h,w) \in [0,\infty) \times \R^d$, the function
\[ g := f_t(t+h,v+w) - f_t(t,v),\]
also solves
\[ g_t = \LL g.\]
Because of \eqref{e:L-ft1}, with $\tilde{\beta}=\alpha'$, we get that
\begin{align*} 
 \|g\|_{L^\infty(Q_R)} &\lesssim C_1 \| (h,0,w)\| R^{\alpha-\alpha'}.
\end{align*}
Thus, we obtain that $g$ is constant applying Lemma
\ref{l:liouville-baby}. Therefore, $f_t$ must be of the form
$f_t(t,v) = a\cdot v + bt + c$. However, \eqref{e:L-ft1}
implies that the kinetic degree of $f_t$ cannot be
more than $\alpha$, and therefore $f_t$ is a constant.

Since $f$ is independent of $x$ and $f_t$ is constant, then $f$ has
the form $f = at + \tilde f(v)$ for some constant $a$. The function
$\tilde f$ satisfies
\[ a = \LL \tilde f.\]

We are left to prove that $f$ is a polynomial in $v$. 
\medskip

\noindent{\sc Step 3: $f$ is a polynomial in $v$.} The third step is
also divided into three cases depending the integer part of
$2s+\alpha$. Indeed, the maximum
number of terms in the polynomial $\tilde f(v)$ will depend of
$2s+\alpha$ belonging to the three possible ranges $(0,1)$, $(1,2)$ or
$(2,3)$ (recall that $2s+\alpha$ is not an integer).

Let us start by assuming that $2s+\alpha \in (0,1)$. Given any
$w \in \R^d$, we set $g(v) := \tilde f(v+w) - \tilde f(v)$. Applying
the assumption (i), with $\beta = 2s + \alpha'$ we get
\[  \|g\|_{L^\infty(Q_R)} \leq C_1 |w|^{\alpha'} R^{\alpha-\alpha'}.\]
Moreover, $g$ solves
\[ 0 = \LL g.\] We apply Lemma \ref{l:liouville-baby} right away. We
deduce that $g$ is constant for any $w \in \R^d$. Therefore $\tilde f$
has the form $\tilde f(v) = b \cdot v + c$. However, the assumption
(i) with $\beta=0$ implies in this case that $b = 0$, so $\tilde f$
must be constant. Therefore, in the case $2s+\alpha \in (0,1)$ we
conclude that $f(t,x,v) = at + c$ for some constants $a$ and
$c$. Thus, $f$ is a polynomial of degree at most $2s$.

In the case $2s+\alpha \in (1,2)$ the function $f$ must be
differentiable in $v$ because of the assumption (i) applied with
$\beta \in (1,2s+\alpha')$. Thus, if we let
$f_j = \partial_{v_j} \tilde f$ we get that
\begin{itemize}
\item[(i')] There is a $C_1 > 0$ such that for all $\beta \in [0,2s+\alpha'-1]$ and $R \geq 1$,
\[ [f_j]_{C^\beta(B_R)} \leq C_1 R^{2s+\alpha-1-\beta}.\]
\item[(ii')] For any $w \in \R^{d}$, we define
\[ g(v) := f_j(v+w) - f_j(v).\]
Then, $g$ solves
\[ 0 = \LL g.\]
\end{itemize}

Therefore, we repeat the proof of the case $2s+\alpha \in (0,1)$ for
$f_j$ instead of $\tilde f$ and get that each partial derivative $f_j$
is constant. Therefore, in this case $\tilde f$ must be an affine
function.

Likewise, in the case $2s+\alpha \in (2,3)$, we apply the argument for
$2s+\alpha \in [1,2)$ to each partial derivative
$f_j = \partial_{v_j} \tilde f$. In this case we obtain that each
$f_j$ is affine, and therefore $\tilde f$ must be a polynomial in $v$
of degree at most $2$.
\end{proof}

\section{Blowup argument}
\label{s:blowup}

In this section, we prove that the H\"older exponent of a solution of
a linear equation of the form \eqref{e:main} can be improved;
moreover, the improvement is quantitative. The result is proved by
blowup and compactness. It is first proved for equation with
``constant coefficients'' (Proposition~\ref{p:blowup}) and then proved
in the general case
(Proposition~\ref{p:localized-improvement-of-holder}). 
%-----------------------------------------------------------------------
\begin{prop}[Improvement by blow up for ``constant coefficients''] \label{p:blowup}
Let $\alpha$ and $\alpha'$ be as in Theorem~\ref{t:liouville}. Assume that 
\[ f_t + v \cdot \nabla_x f - \LL f = c_0(t,x,v) \ \text{ in } Q_1,\]
for some function $c_0 \in C_\ell^\alpha(Q_1)$ and some kernel $K \in \K$. Then
\[ [f]_{C_\ell^{2s+\alpha}(Q_{1/2})} \leq C \left( \|c_0\|_{C_\ell^\alpha(Q_1)} + \|f\|_{C_\ell^{2s+\alpha'}((-1,0] \times B_1 \times \R^d)} \right)\]
where $C$ only depends on $d$, $s$, $\alpha$, $\alpha'$, $\lambda$ and $\Lambda$.
\end{prop}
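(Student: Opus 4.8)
The plan is to argue by contradiction using a blow-up and compactness scheme, combined with the Liouville theorem (Theorem~\ref{t:liouville}).

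\medskip

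\noindent\textbf{Setup and normalization.} Suppose the estimate fails. Then there exist sequences of kernels $K_j \in \K$, functions $c_0^j \in C_\ell^\alpha(Q_1)$ and solutions $f_j$ of $\partial_t f_j + v \cdot \nabla_x f_j - \LL_j f_j = c_0^j$ in $Q_1$ such that
\[
\|c_0^j\|_{C_\ell^\alpha(Q_1)} + \|f_j\|_{C_\ell^{2s+\alpha'}((-1,0]\times B_1 \times \R^d)} \le 1,
\qquad [f_j]_{C_\ell^{2s+\alpha}(Q_{1/2})} \to \infty.
\]
Because $2s+\alpha' < 2s+\alpha$, the $C_\ell^{2s+\alpha}$ semi-norm restricted to small cylinders $Q_{d_z}(z)$ with $z$ close to $\partial_p Q_{1/2}$ stays bounded by the $C_\ell^{2s+\alpha'}$ bound, so the blow-up of the semi-norm must happen in the interior; quantitatively one works with the adimensional semi-norm $[f_j]_{C^{2s+\alpha}_{\ell,\ast}(Q_{1/2})}$ from Definition~\ref{d:adimensional-Holder-space}, which also tends to infinity. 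Pick points $z_j \in Q_{1/2}$ and radii $r_j = d_{z_j} \to 0$ that nearly realize the sup, and set $M_j := [f_j]_{C^{2s+\alpha}_\ell(Q_{r_j}(z_j))} \to \infty$ (after passing to a subsequence).

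\medskip

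\noindent\textbf{The blow-up sequence.} Let $p_j$ be the polynomial expansion of $f_j$ at $z_j$ of kinetic degree $< 2s+\alpha$, and define the rescaled, recentered functions
\[
\tilde f_j(z) := \frac{ (f_j - p_j)(z_j \circ S_{r_j} z)}{M_j\, r_j^{2s+\alpha}}.
\]
By left-invariance and scaling of the equation, $\tilde f_j$ solves an equation of the form $\partial_t \tilde f_j + v \cdot \nabla_x \tilde f_j - \tilde{\LL}_j \tilde f_j = \tilde c_j$ in a large cylinder $Q_{R_j}$ with $R_j \to \infty$, where $\tilde{\LL}_j$ has a kernel in $\K$ (scaling leaves $\K$ invariant) and $\tilde c_j$ involves the rescaled $c_0^j$ plus the error term $\LL_j p_j$ coming from applying $\tilde{\LL}_j$ to the subtracted polynomial. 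The normalization gives $[\tilde f_j]_{C^{2s+\alpha}_\ell(Q_1)} = 1$. A standard argument using the adimensional semi-norm and the fact that we chose $z_j, r_j$ nearly maximizing shows the growth control $[\tilde f_j]_{C^\beta_\ell(Q_R)} \lesssim R^{2s+\alpha-\beta}$ for all $R \ge 1$ and $\beta \in [0, 2s+\alpha']$, uniformly in $j$; moreover $\tilde f_j$ vanishes to order $2s+\alpha$ at the origin together with its polynomial part, so its low-order Taylor coefficients at $0$ are zero. On $\tilde c_j$: the rescaled $c_0^j$ part is $O(r_j^\alpha \|c_0^j\|_{C^\alpha_\ell}) \to 0$ locally uniformly since its $C^\alpha$ norm scales like $r_j^\alpha$, and the polynomial-error term must be handled using Lemma~\ref{l:estimA0-loc} (applied to $\LL_j$ restricted appropriately) together with the growth bounds, so that $\tilde c_j \to 0$ locally uniformly.

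\medskip

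\noindent\textbf{Passing to the limit and contradiction.} Using the uniform $C^{2s+\alpha}_\ell$ bound on compact sets, Arzel\`a--Ascoli gives (along a subsequence) $\tilde f_j \to f_\infty$ in $C^{2s+\alpha'}_{\ell,loc}$; by Lemma~\ref{l:weak-limit-compactness}, $\tilde{\LL}_j$ has a weak-$\ast$ convergent subsequence with limit $\LL_\infty$, $K_\infty \in \K$ by Lemma~\ref{l:weak-limit-stay-in-class}. The tail control needed to pass to the limit in the operator is exactly assumption~\eqref{5} of Lemma~\ref{l:weak-limit-stability}, which follows from the polynomial growth bound with $\beta = \gamma + \eps < \min(1,2s)$ as in the remark after Theorem~\ref{t:liouville}. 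Hence $f_\infty$ solves $\partial_t f_\infty + v \cdot \nabla_x f_\infty = \LL_\infty f_\infty$ on all of $(-\infty,0]\times\R^d\times\R^d$. To apply the Liouville theorem one also needs the increment $g(z) = f_\infty(\xi \circ z) - f_\infty(z)$ to solve the same equation for every $\xi = (h,y,w)$ with $h<0$: this follows from left-invariance of the limiting equation (the kernel is translation-invariant in the limit). Thus $f_\infty$ satisfies hypotheses (i)--(ii) of Theorem~\ref{t:liouville}, so $f_\infty$ is a polynomial of kinetic degree $< 2s+\alpha$. But $f_\infty$ vanishes to order $2s+\alpha$ at the origin along with its polynomial part, which forces $f_\infty \equiv 0$. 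On the other hand, the semi-norm $[\tilde f_j]_{C^{2s+\alpha}_\ell(Q_1)} = 1$ should pass to the limit to give $[f_\infty]_{C^{2s+\alpha}_\ell(Q_{1/2})} \ge c > 0$ (one must check the semi-norm does not fully concentrate; this is the usual delicate point, handled by tracking the pair of points realizing a definite fraction of the semi-norm and noting their separation stays bounded below because of the rescaling by $r_j = d_{z_j}$ — near-boundary concentration is excluded by the $C^{2s+\alpha'}$ bound). This contradicts $f_\infty \equiv 0$, completing the proof.

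\medskip

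\noindent\textbf{Main obstacle.} The principal difficulty is the bookkeeping around the polynomial subtraction: verifying that $\LL_j$ applied to the rescaled subtracted polynomial $p_j$ produces a right-hand side $\tilde c_j$ that genuinely converges to zero locally uniformly (rather than contributing a nonzero limit) requires the quantitative estimates of Lemma~\ref{l:estimA0-loc} together with the precise relationship $\alpha = \frac{2s}{1+2s}\gamma$, and is the place where the exponent restriction is truly used. A secondary technical point is ensuring the normalized semi-norm $[\tilde f_j]_{C^{2s+\alpha}_\ell(Q_1)} = 1$ survives the limit — i.e., that the blow-up is chosen so the semi-norm is attained (or nearly attained) at a controlled scale and location, which is what the adimensional semi-norm $C^{2s+\alpha}_{\ell,\ast}$ and the maximizing choice of $z_j$ are designed to guarantee.
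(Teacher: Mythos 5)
Your overall skeleton (contradiction, rescaling, kernel compactness, Liouville) matches the paper, but two steps fail as written, and they are precisely the points the paper's proof is engineered to avoid. First, you write an equation for the blow-up limit itself, claiming $\partial_t f_\infty + v\cdot\nabla_x f_\infty = \LL_\infty f_\infty$ on $(-\infty,0]\times\R^d\times\R^d$, and then deduce the equation for increments ``by left-invariance of the limiting equation''. This is circular and, worse, ill-posed: $\tilde f_j$ and $f_\infty$ grow like $\|z\|^{2s+\alpha}$, which exceeds the order $2s$ of the operator, so the tail of $\LL_\infty f_\infty$ diverges and no equation for $f_\infty$ (or for $\tilde f_j$) can be given sense -- this is exactly the content of the remark following Theorem \ref{t:liouville}, and it is why hypothesis (ii) there is stated for increments only. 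Relatedly, your plan to treat $\LL_j p_j$ of the subtracted polynomial as an error term in $\tilde c_j$ and control it via Lemma \ref{l:estimA0-loc} does not work: $\LL_j$ of a growing polynomial is not a well-defined quantity to estimate. The paper never faces this: it forms the increments $g_j(z)=\tilde f_j(\xi\circ z)-\tilde f_j(z)$ \emph{before} passing to the limit; since the subtracted polynomial has kinetic degree $<2s+\alpha'<1+2s$, its increment is affine in $v$ and independent of $(t,x)$, hence killed by $\partial_t+v\cdot\nabla_x$ and by $\LL_j$ (symmetry of $K_j$), so $g_j$ solves the equation with right-hand side $\tilde c_j(\xi\circ z)-\tilde c_j(z)\lesssim F_j^{-1}\to 0$, and Lemma \ref{l:weak-limit-stability} with the majorant $\omega$ yields the equation for $g_\infty$, which is hypothesis (ii) of the Liouville theorem.

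Second, your choice of blow-up quantity leaves the final contradiction unproved. You normalize by the top seminorm $[f_j]_{C_\ell^{2s+\alpha}(Q_{r_j}(z_j))}$ with $r_j=d_{z_j}$ and acknowledge that the survival of the normalization in the limit is ``the usual delicate point''; but your proposed fix (separation of the extremal pair bounded below by the rescaling, boundary concentration excluded by the $C_\ell^{2s+\alpha'}$ bound) does not exclude concentration at small interior scales, and under convergence only in $C^{2s+\alpha'}_{\ell,loc}$ (or $C^{2s+\eps}_{\ell,loc}$) a unit $C_\ell^{2s+\alpha}$ seminorm can vanish in the limit. Moreover, the uniform local bounds and the growth bounds (i) required by Theorem \ref{t:liouville} are not actually available from your normalization. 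The paper resolves both issues with Lemma \ref{l:holder-exponent-improvement} (Serra's device): the quantity blown up is $F_j=\sup_{r>0}\sup_{z\in Q_{1/2}} r^{\alpha'-\alpha}[f_j]_{C_\ell^{2s+\alpha'}(Q_r(z))}$, attained at $(r_j,z_j)$. Maximality over \emph{all} centers and radii gives simultaneously $[\tilde f_j]_{C_\ell^{2s+\alpha'}(Q_1)}=1$, the growth bound $[\tilde f_j]_{C_\ell^{2s+\alpha'}(Q_R)}\le R^{\alpha-\alpha'}$ for $R\ge 1$ (hence, by interpolation, all the bounds in (i) and the single majorant $\omega$), and the small-scale bound $\rho^{\alpha-\alpha'}$ on cylinders of radius $\rho\le 1$, which is the anti-concentration estimate that makes the normalization contradict $f_\infty\equiv 0$. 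Without replacing your normalization by this intermediate-norm, all-scales maximization (or supplying an equivalent anti-concentration argument), the proof does not close.
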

%-----------------------------------------------------------------------
Before proving the proposition, we state a lemma corresponding to
 \cite[Claim~3.2]{serra1}. Its adaptation to kinetic H\"older spaces
is straight forward.
%----------------------------------------------------
\begin{lemma}[From $C_\ell^{\beta'}$ to $C_\ell^\beta$ with $\beta > \beta'$] \label{l:holder-exponent-improvement} Let
  $0 < \beta' < \beta$. Let $\nu$ be the maximum number in
  $\mathbb N + 2s \mathbb N$ such that $\nu < \beta$. Assume that
  $\nu < \beta' < \beta$. Let $f$ be a continuous function in
  $C_\ell^{\beta'}((-\infty,0] \times \R^d \times \R^d)$ and let $C_0$ be
  such that
  \[ \sup_{r > 0} \sup_{z \in Q_{1/2}} r^{\beta'-\beta}
    [f]_{C_\ell^{\beta'}(Q_r(z))} \leq C_0.\] Then
  $[f]_{C_\ell^\beta(Q_{1/2})} \leq C_0$. Moreover, if
  $\beta \notin \mathbb N + 2s \mathbb N$ (and assuming $f$ smooth),
  the supremum is attained at some $r >0 $ and
  $z \in \overline{Q_{1/2}}$.
\end{lemma}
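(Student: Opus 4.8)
The plan is to prove that for every $z_0\in Q_{1/2}$ the polynomial appearing in Definition~\ref{d:holder-space} for the $C_\ell^{\beta'}$ regularity of $f$ at $z_0$ \emph{also} witnesses its $C_\ell^\beta$ regularity at $z_0$, so that the scaled hypothesis on $C_\ell^{\beta'}$ seminorms upgrades directly to the desired $C_\ell^\beta$ bound. I would first record two facts about kinetic polynomials. Since $\nu<\beta'$ and $\mathbb N+2s\mathbb N$ has no element in the interval $(\nu,\beta)$, the exponent $\beta'$ is not in $\mathbb N+2s\mathbb N$; combined with the equivalence of norms on the finite-dimensional space of polynomials of kinetic degree $\le\nu$ (Lemma~\ref{l:polynomial-norm}) and the scaling $S_r$, this shows that a polynomial of kinetic degree $\le\nu$ which is $O(\|\xi\|^{\beta'})$ near the origin must vanish. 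Consequently the polynomial $p$ in Definition~\ref{d:holder-space} realizing the $C_\ell^{\beta'}$ regularity of $f$ at a point $z_0$ is unique — call it $p_{z_0}$ — and it is determined already by the behaviour of $f$ in an arbitrarily small one-sided-in-time neighbourhood of $z_0$; moreover $\deg_k p_{z_0}\le\nu<\beta$, so $p_{z_0}$ is also admissible in the definition of $[f]_{C_\ell^\beta}$.

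Next I would fix $z_0,z\in Q_{1/2}$, set $\rho=d_\ell(z_0,z)$, and observe that, because the cylinders $Q_r(\cdot)$ open backward in time, one cannot in general enclose $z$ in a cylinder centred at $z_0$; the remedy is to let $c$ be whichever of $z_0,z$ has the larger time coordinate. Then $c\in Q_{1/2}$, both $z_0$ and $z$ have time $\le t_c$, and both lie at $d_\ell$-distance $\le\rho$ from $c$, so $z_0,z\in Q_{(1+\eps)\rho}(c)$ for every $\eps>0$. The hypothesis at the centre $c$ with $r=(1+\eps)\rho$ gives $[f]_{C_\ell^{\beta'}(Q_{(1+\eps)\rho}(c))}\le C_0((1+\eps)\rho)^{\beta-\beta'}$. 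By Definition~\ref{d:holder-space} this seminorm dominates $|f(z_2)-p_{z_1}(z_2)|\,d_\ell(z_1,z_2)^{-\beta'}$ for \emph{every} pair $z_1,z_2$ in $Q_{(1+\eps)\rho}(c)$, and for the base point $z_1=z_0$ the relevant polynomial is $p_{z_0}$ by uniqueness; taking $z_2=z$,
\[ |f(z)-p_{z_0}(z)|\le C_0\,((1+\eps)\rho)^{\beta-\beta'}\,d_\ell(z_0,z)^{\beta'}=C_0\,(1+\eps)^{\beta-\beta'}\rho^{\beta}. \]
Letting $\eps\to0^+$ and noting this holds for all $z\in Q_{1/2}$ with the single polynomial $p_{z_0}$ yields $[f]_{C_\ell^\beta(Q_{1/2})}\le C_0$. (If one prefers the box-convention for $Q_r$ over the metric-ball one, the same computation gives $\lesssim C_0$, which is all that is used downstream.)

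For the final assertion, write $M:=\sup_{r>0}\sup_{z\in Q_{1/2}}r^{\beta'-\beta}[f]_{C_\ell^{\beta'}(Q_r(z))}$; we may assume $M>0$, since otherwise the supremum is attained trivially. The plan is to show the normalized quantity is small at both ends of the range of $r$. As $r\to\infty$ it is at most $r^{\beta'-\beta}$ times the finite global seminorm $[f]_{C_\ell^{\beta'}((-\infty,0]\times\R^d\times\R^d)}$, hence tends to $0$ because $\beta'<\beta$. As $r\to0^+$, smoothness of $f$ and a kinetic Taylor expansion in the group variables give $[f]_{C_\ell^{\beta'}(Q_r(z))}\le C(f)\,r^{\mu-\beta'}$ uniformly for $z\in\overline{Q_{1/2}}$, where $\mu$ is the smallest element of $\mathbb N+2s\mathbb N$ strictly above $\nu$; thus the quantity is $\le C(f)\,r^{\mu-\beta}$, which tends to $0$ precisely because $\mu>\beta$ when $\beta\notin\mathbb N+2s\mathbb N$. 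Consequently the supremum is attained over a compact set $[\rho_1,\rho_2]\times\overline{Q_{1/2}}$, on which $(r,z)\mapsto[f]_{C_\ell^{\beta'}(Q_r(z))}$ is upper semicontinuous (it is monotone in $r$ and a supremum of quantities depending continuously on the two points defining it), so the supremum is achieved at some $r>0$ and $z\in\overline{Q_{1/2}}$.

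I expect the only genuine difficulty to be bookkeeping rather than hard analysis: one must respect the backward-in-time geometry of the cylinders $Q_r(\cdot)$ (which dictates the choice of centre $c$) and check carefully that the polynomial produced by the $C_\ell^{\beta'}$ seminorm at base point $z_0$ is literally the polynomial $p_{z_0}$ used in the $C_\ell^\beta$ seminorm — this is exactly where the non-criticality $\beta'\notin\mathbb N+2s\mathbb N$ (equivalently $\nu<\beta'$) and the degree bound $\deg_k p_{z_0}\le\nu<\beta$ enter. Beyond that, the argument is a direct appeal to the definitions, the equivalence of norms on kinetic polynomials, and a brief compactness argument.
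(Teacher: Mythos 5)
The first half of your argument is correct and is essentially the intended one: the paper does not prove this lemma itself but refers to \cite[Claim~3.2]{serra1} and calls the adaptation straightforward, and your proof is precisely that adaptation. The points you isolate are the right ones: since no element of $\mathbb N+2s\mathbb N$ lies in $[\beta',\beta)$, the approximating polynomial at each base point has kinetic degree at most $\nu$ and is admissible for both exponents; it is unique because a polynomial of kinetic degree at most $\nu$ that is $O(d_\ell^{\beta'})$ on a one-sided-in-time neighbourhood must vanish (Lemma~\ref{l:polynomial-norm} plus the scaling $S_r$), so the polynomial produced by the seminorm on any small cylinder containing the base point is the same object $p_{z_0}$; and the cylinder must be centred at the later of the two points so that its time cap does not exclude the other one. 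The convention-dependent constant ($\leq C_0$ with the metric-ball cylinders, $\lesssim C_0$ with the box convention) is immaterial for the use in Proposition~\ref{p:blowup}.

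The one step that is not justified as written is the upper semicontinuity of $\Phi(r,z):=r^{\beta'-\beta}[f]_{C_\ell^{\beta'}(Q_r(z))}$ in the attainment part. A supremum of continuous quantities is lower semicontinuous, not upper semicontinuous, and monotonicity in $r$ does not control the dependence on the centre $z$: the time cap $\{t\le t_z\}$ of a kinetic cylinder moves with $z$, so if $t_{z_k}\downarrow t_{z_*}$ the cylinders $Q_r(z_k)$ are not contained in $Q_{r'}(z_*)$ for any $r'$, and no comparison by inclusion is available. The claim is nevertheless true for smooth $f$, and your own uniqueness observation supplies the fix: since $Q_r(z)$ contains a one-sided-in-time neighbourhood of each of its points, any competitor polynomial at a base point $w$ other than the kinetic Taylor polynomial $T_w$ of degree at most $\nu$ produces an infinite supremum, so
\[
[f]_{C_\ell^{\beta'}(Q_r(z))}=\sup_{w,\zeta\in Q_r(z)}\frac{|f(\zeta)-T_w(\zeta)|}{d_\ell(w,\zeta)^{\beta'}},
\]
where the quotient is jointly continuous for smooth $f$ and extends by $0$ on the diagonal because the Taylor error is $O(d_\ell^{\mu})$ with $\mu>\beta'$. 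Given a maximizing sequence $(r_k,z_k)$, pick near-maximizing pairs $(w_k,\zeta_k)$, pass to limits to land in the closed cylinder $\overline{Q_{r_*}(z_*)}$, and use that the half-open cylinder is dense in its closure to get $\limsup_k\Phi(r_k,z_k)\le\Phi(r_*,z_*)$; approximating $z_*$ by centres in $Q_{1/2}$ with equal or later time and slightly larger radius gives $\Phi(r_*,z_*)\le\sup$, so the supremum is attained. The remaining ingredients of your attainment argument, the decay as $r\to\infty$ from the global seminorm and the decay $\lesssim r^{\mu-\beta}$ as $r\to0$, which is exactly where $\beta\notin\mathbb N+2s\mathbb N$ and the smoothness of $f$ enter, are correct.
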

%---------------------------------------------------------
We can now turn to the proof of Proposition~\ref{p:blowup}. 
\begin{proof}[Proof of Proposition~\ref{p:blowup}]
  Without loss of generality, we normalize the problem so that
  \[ \|c_0\|_{C_\ell^\alpha(Q_1)} \leq 1 \quad \text{ and } \quad
    \|f\|_{C_\ell^{2s+\alpha'}((-1,0] \times B_1 \times \R^d)} \leq 1.\]
  Under these conditions, we need to prove that
  $\|f\|_{C_\ell^{2s+\alpha}(Q_{1/2})} \lesssim 1$.  We proceed by
  contradiction.  Assuming the opposite, there would exist sequences
  $f_j$, $K_j \in \K$ and $c_j$ such that,
\begin{align}
\label{e:cj} \|c_j\|_{C_\ell^\alpha(Q_1)} \leq 1, \\
\label{e:fj} \|f_j\|_{C_\ell^{2s+\alpha'}((-1,0] \times B_1 \times \R^d)} \leq 1, \\[2ex]
\label{e:ej} \partial_t f_j + v \cdot \nabla_x f_j - \LL_j f_j = c_j \text{ in } Q_1, \\
\label{e:Fj}  \left( \sup_{r > 0} \sup_{z \in Q_{1/2}} r^{\alpha'-\alpha} [f_j]_{C_\ell^{2s+\alpha'}(Q_r(z))} \right) \nearrow +\infty \text{ as } j \to \infty.
\end{align}
The last property holds since we cannot apply Lemma~\ref{l:holder-exponent-improvement} uniformly.
Thanks to Lemma~\ref{l:holder-exponent-improvement}, there exists $r_j>0$ and $z_j \in \overline{Q_{1/2}}$ such that
\[ F_j := \sup_{r > 0} \sup_{z \in Q_{1/2}} r^{\alpha'-\alpha} [f_j]_{C_\ell^{2s+\alpha'}(Q_r(z))} =r_j^{\alpha'-\alpha} [f_j]_{C_\ell^{2s+\alpha'}(Q_{r_j}(z_j))} .\]
In particular, $F_j \to +\infty$ and $r_j \to 0^+$.

We define
\[ \tilde f_j(z) := \frac{ (f_j-q_j)(z_j \circ S_{r_j} (z))  }{r_j^{2s+\alpha} F_j}\]
where $q_j$ denotes the polynomial expansion of $f_j$ at $z_j$ as in the definition of $C_\ell^{2s+\alpha'}(Q_{r_j} (z_j))$.
This sequence $\tilde f_j$ satisfies the following properties.
\begin{itemize}
\item Since $F_j = r_j^{\alpha'-\alpha} [f_j]_{C_\ell^{2s+\alpha'}(Q_{r_j}(z_j))}$, we have
\begin{equation} \label{e:b1-normalization}
 [\tilde f_j]_{C_\ell^{2s+\alpha'}(Q_1)} = 1.
\end{equation}
\item Since $F_j \geq r^{\alpha'-\alpha} [f_j]_{C_\ell^{2s+\alpha'}(Q_{r}(z_j))}$, for all $r > r_j$,  we have
\[ [\tilde f_j]_{C_\ell^{2s+\alpha'}(Q_R)} \leq R^{\alpha-\alpha'} \ \text{ for all } R\geq 1.\]
\item Because of the substraction of the polynomial expansion $q_j$, we also have
\[ \|\tilde f_j\|_{L^\infty(Q_R)} \leq R^{2s+\alpha} \qquad \text{ for all } R \geq 1.\]
\item By interpolation
  (Proposition~\ref{p:adimensional-interpolation}) between the last
  two items, we deduce for all $\beta \in (0,2s+\alpha']$,
\begin{equation} \label{e:b1-growth}
 [\tilde f_j]_{C_\ell^\beta(Q_R)} \lesssim R^{2s+\alpha-\beta} \qquad \text{ for all } R \geq 1.
\end{equation}
\end{itemize}

For each choice of $\xi  \in (-\infty,0] \times \R^d \times \R^d$, we define
\[ g_j(z)  := \tilde f_j(\xi \circ z ) - \tilde f_j(z).\]
Condition~\eqref{e:b1-growth} allows us to bound the growth of
$g_j(t,x,v)$ as $|v| \to \infty$ by a single majorant function
$\omega(|v|)$ as in \eqref{e:L-majorant}.

Because of \eqref{e:ej}, each function $g_j$ satisfies for $j$ large enough (see the choice of $R_j$ below) the equation
\begin{equation} \label{e:b1-eq-for-g} \partial_t g_j + v \cdot
  \nabla_x g_j - \tilde \LL_j g_j = \tilde c_j(\xi \circ z) - \tilde
  c_j(z)  \text{ in } Q_{R_j} (0),
\end{equation}
where the operator $\tilde \LL_j$ corresponds to a scaled kernel $\tilde K_j$ and the source term $c_j$ is scaled too, 
\begin{align*}
\tilde K_j(w) &= r_j^{-2s} K_j (r_j w), \\
\tilde c_j(z) &= r_j^{-\alpha} F_j^{-1} c_j(z_j \circ S_{r_j} (z)).
\end{align*}
In particular, we choose the radius $R_j \to +\infty$ such that
  $Q_{2 R_j r_j} (z_j) \subset Q_1$ and $\xi \circ z \in Q_{2R_j} (0)$
  for $z \in Q_{R_j} (0)$.

Because of \eqref{e:cj}, it is straight forward to verify that,
\[ \| \tilde c_j(\xi \circ z) - \tilde c_j(z) \|_{L^\infty(Q_R)} \lesssim  F_j^{-1} d_\ell (\xi \circ z, z)^\alpha \lesssim F_j^{-1}.\]
Therefore, the right hand side of \eqref{e:b1-eq-for-g} converges to zero over any compact set.

We observe that all the kernels $\tilde K_j$ belong to the class
$\K$. Applying Lemma~\ref{l:weak-limit-compactness}, they converge
weak-$\ast$ to a kernel $K_\infty$ up to a subsequence. Because of
Lemma~\ref{l:weak-limit-stay-in-class}, $K_\infty \in \K$.

We pick $\eps>0$ so that
$\lfloor 2s+\alpha \rfloor < 2s+\eps < 2s+\alpha'$.  Because of
Arzela-Ascoli theorem, we take a subsequence so that $\tilde f_j$
converges to some function $f: (-\infty,0]\times \R^d \times \R^d$
locally in $C_\ell^{2s+\eps}$.  This function $f_\infty$ also satisfies
\eqref{e:b1-growth}.

Since all the $\tilde f_j$ are controlled by a single majorizer
$\omega$, we can apply Lemma~\ref{l:weak-limit-stability} and we have
that the corresponding function $g_\infty = \lim \tilde g_j$ solves
the limit equation from \eqref{e:b1-eq-for-g},
\[ \partial_t g_\infty + v \cdot \nabla_x g_\infty - \LL_\infty g_\infty = 0 \text{ in } (-\infty,0] \times \R^d \times \R^d.\]

We are then able to apply the Liouville theorem~\ref{t:liouville} and
get that $g_\infty$ is a polynomial. However, we subtracted the polynomial
expansion of $f_j$ in the definition of $\tilde{f}_j$, forcing $\tilde f_j$ to
have a vanishing polynomial expansion at $0$ of order up to
$2s+\alpha'$. Since $\tilde f_j \to f_\infty$ in $C_\ell^{2s+\eps}$ and
$2s+\eps > \lfloor 2s+\alpha \rfloor$, then all derivatives of
$f_\infty$ at the origin must be zero and therefore $f_\infty = 0$.

This contradicts \eqref{e:b1-normalization} and  the proof is complete.
\end{proof}
%--------------------------------------------------------
\begin{remark} \label{rem:ext-lim} We would like to emphasize the
  importance of the majorant function $\omega$. In order to do so, we
  point out that it is possible to follow the same outline of the
  proof by blowup to obtain the result of
  Lemma~\ref{l:holder-exponent-improvement} under more general
  assumptions, at the expense of a more complicated proof. Again, we
  should not overlook the importance of the majorant function
  $\omega$. For example, with a more complicated function $\omega$ it
  is possible to derive an estimate of the form
\begin{equation} \label{e:hw1}
  \|f\|_{C_\ell^{2s+\alpha}(Q_{1/2})} \leq C \left( \|f\|_{C_\ell^{2s+\alpha'}(Q_1)}
    + \|f\|_{C_\ell^\gamma((-1,0] \times B_1 \times \R^d)} + \|c\|_{C_\ell^\alpha((-1,0] \times B_1 \times \R^d)} \right),
\end{equation}
provided that $\alpha \leq 2s \gamma/(1+2s)$.
However, the following estimate is not true for any $\gamma<\alpha$, even in the elliptic case
\begin{equation} \label{e:hw2}
  \|f\|_{C_\ell^{2s+\alpha}(Q_{1/2})} \leq C \left( \|f\|_{C_\ell^{2s+\alpha'}(Q_1)}
    + \|f\|_{C_\ell^\gamma((-1,0] \times B_1 \times \R^d)} + \|c\|_{C_\ell^\alpha((-1,0] \times B_1 \times \R^d)} \right).
\end{equation}
If we tried to reproduce the proof of Lemma
\ref{l:holder-exponent-improvement} for this last inequality, we would
still have scaled functions $\tilde f_j$ that converge locally
uniformly to a function satisfying the assumption (i) in Liouville's
theorem~\ref{t:liouville}. We can still construct the functions
$\tilde g_j$ that converge uniformly to a function $g_\infty$. But
unless we have an appropriate control on the tails of the original
functions $f_j$, we cannot conclude that the limit function $g_\infty$
will satisfy any equation.
\end{remark}

We now extend the blowup lemma to the case of ``variable
coefficients''. In order to do so, we are going to use
Assumption~\ref{a:K-holder} depending on a constant $A_0>0$.
%------------------
\begin{prop}[Improvement by blowup for variable
  coefficients] \label{p:localized-improvement-of-holder} Let
  $\alpha,\alpha'$ as in Theorem~\ref{t:liouville}. Under the
  assumptions of Theorem~\ref{t:main}, we have
\[
  [f]_{C_\ell^{2s+\alpha}(Q_{1/4})} \leq C \left( \|f\|_{C_\ell^{2s+\alpha'}(Q_1)}
    + A_0 \|f\|_{C_\ell^{2s+\alpha}(Q_1)}  + (1+A_0) \|f\|_{C_\ell^\gamma((-1,0] \times B_1 \times \R^d)} + \|c_0\|_{C_\ell^\alpha(Q_1)} \right),
\]
where the constant $C$ depends on dimension, $s$, $\alpha$, $\alpha'$, $\lambda$ and $\Lambda$.
\end{prop}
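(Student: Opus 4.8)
The plan is to freeze the kernel of $\LL$ at the origin and absorb its variation into the source term. Write the equation as $f_t+v\cdot\nabla_x f-\LL_0 f=c_0+\Phi$ in $Q_1$, where $\LL_0$ is the (translation invariant) operator with kernel $K_0\in\K$ and
\[
  \Phi(z):=(\LL_z-\LL_0)f(z)=\int_{\R^d}\bigl(f(z\circ(0,0,w))-f(z)\bigr)\,(K_z-K_0)(w)\dd w ,
\]
and apply the constant-coefficient estimate of Proposition~\ref{p:blowup} to $\LL_0$. Two points must be handled first: (a) $\LL_0 f$ makes sense only because $f\in C_\ell^\gamma$ with $\gamma<\min(1,2s)<2s$ controls the tails in $v$, whereas Proposition~\ref{p:blowup} asks for $C_\ell^{2s+\alpha'}$ control for all $v\in\R^d$; (b) one must show $\Phi\in C_\ell^\alpha$ with $\|\Phi\|_{C_\ell^\alpha(Q_{1/2})}\lesssim A_0\bigl(\|f\|_{C_\ell^{2s+\alpha}(Q_1)}+\|f\|_{C_\ell^\gamma}\bigr)$.

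For point (a) I would truncate in $v$: fix $\eta\in C_c^\infty(B_1)$ with $\eta\equiv1$ on $B_{3/4}$ and set $\tilde f:=\eta f$. Then $\tilde f\equiv f$ on $Q_{1/2}$, one has $\|\tilde f\|_{C_\ell^{2s+\alpha'}((-1,0]\times B_1\times\R^d)}\lesssim\|f\|_{C_\ell^{2s+\alpha'}(Q_1)}+\|f\|_{L^\infty(Q_1)}$, and in $Q_{1/2}$, $\tilde f_t+v\cdot\nabla_x\tilde f-\LL_0\tilde f=c_0+\Phi+h$ with
\[
  h(z):=-\int_{\R^d}\bigl(\eta(v+w)-1\bigr)f(z\circ(0,0,w))\,K_0(w)\dd w .
\]
For $z\in Q_{1/2}$ the integrand of $h$ vanishes unless $|w|\gtrsim1$, so $h$ is driven by the kernel $K_0\,\mathbf{1}_{\{|w|\gtrsim1\}}$, which lives away from the origin; integrating $(1+|w|)^\gamma|K_0(w)|$ (finite since $\gamma<2s$, exactly as for the term $C(z)$ in the proof of Lemma~\ref{l:estimA0-loc}) gives $\|h\|_{C_\ell^\alpha(Q_{1/2})}\lesssim\|f\|_{C_\ell^\gamma((-1,0]\times B_1\times\R^d)}$.

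The heart of the argument is the bound on $\Phi$. Since $\alpha<1$ and $\Phi(0)=0$, it suffices to estimate $|\Phi(z_1)-\Phi(z_2)|$ for $z_1,z_2\in Q_{1/2}$. Set $\rho:=d_\ell(z_1,z_2)$ and split $K_{z_1}-K_0=(K_{z_1}-K_{z_2})+(K_{z_2}-K_0)$, which gives $\Phi(z_1)-\Phi(z_2)=A+B$ with
\[
  A=\int\bigl(f(z_1\circ(0,0,w))-f(z_1)\bigr)(K_{z_1}-K_{z_2})(w)\dd w,\qquad B=\LL_{K_{z_2}-K_0}f(z_1)-\LL_{K_{z_2}-K_0}f(z_2),
\]
where $\LL_M g(z):=\int(g(z\circ(0,0,w))-g(z))M(w)\dd w$. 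By Assumption~\ref{a:K-holder} the symmetric kernel $K_{z_1}-K_{z_2}$ satisfies \eqref{e:sign-changing-upper-bound} with constant $A_0\rho^\alpha$, and $K_{z_2}-K_0$ with constant $A_0\,d_\ell(z_2,0)^\alpha\lesssim A_0$. For $B$ one uses $|B|\le[\LL_{K_{z_2}-K_0}f]_{C_\ell^\alpha(Q_{1/2})}\,\rho^\alpha$ and Lemma~\ref{l:estimA0-loc} (with $\Lambda$ replaced by $A_0$), obtaining $|B|\lesssim A_0\bigl(\|f\|_{C_\ell^{2s+\alpha}(Q_1)}+\|f\|_{C_\ell^\gamma}\bigr)\rho^\alpha$. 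For $A$ one estimates $|\LL_{K_{z_1}-K_{z_2}}f(z_1)|$ directly, splitting the integral at a fixed small radius: the near part is symmetrized via $K_z(w)=K_z(-w)$ and is $\lesssim[f]_{C_\ell^{2s+\alpha}(Q_1)}A_0\rho^\alpha$ (with the additional $D_v^2 f(z_1)$ correction term, controlled by Lemma~\ref{l:holder-derivatives}, occurring when $2s+\alpha>2$, exactly as in the proof of Lemma~\ref{l:estimA0}), while the far part is $\lesssim[f]_{C_\ell^\gamma}\int_{|w|\gtrsim1}(1+|w|)^\gamma|K_{z_1}-K_{z_2}|\lesssim A_0\rho^\alpha\|f\|_{C_\ell^\gamma}$ since $\gamma<2s$. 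Combining, $\|\Phi\|_{C_\ell^\alpha(Q_{1/2})}\lesssim A_0\bigl(\|f\|_{C_\ell^{2s+\alpha}(Q_1)}+\|f\|_{C_\ell^\gamma((-1,0]\times B_1\times\R^d)}\bigr)$.

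Finally, apply Proposition~\ref{p:blowup}, rescaled so as to treat the equation for $\tilde f$ in $Q_{1/2}$ with the translation invariant kernel $K_0\in\K$ and source $c_0+\Phi+h\in C_\ell^\alpha(Q_{1/2})$; since $\tilde f\equiv f$ on $Q_{1/4}$,
\[
  [f]_{C_\ell^{2s+\alpha}(Q_{1/4})}\lesssim\|c_0+\Phi+h\|_{C_\ell^\alpha(Q_{1/2})}+\|\tilde f\|_{C_\ell^{2s+\alpha'}\bigl((-(1/2)^{2s},0]\times B_{(1/2)^{1+2s}}\times\R^d\bigr)} ,
\]
and inserting the bounds just obtained (absorbing $\|f\|_{L^\infty(Q_1)}$ into $\|f\|_{C_\ell^\gamma}$) produces exactly the asserted inequality; the passage $Q_1\to Q_{1/2}$ from Lemma~\ref{l:estimA0-loc} followed by $Q_{1/2}\to Q_{1/4}$ from Proposition~\ref{p:blowup} accounts for the final radius. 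I expect the estimate on $\Phi$ to be the main obstacle: one must pick the kernel splitting so that one piece is handled by the pointwise bound behind Lemma~\ref{l:estimA0} while the other is handled by the incremental bound of Lemma~\ref{l:estimA0-loc}, and then track strictly which of $\|f\|_{C_\ell^{2s+\alpha}(Q_1)}$, $\|f\|_{C_\ell^{2s+\alpha'}(Q_1)}$ and $\|f\|_{C_\ell^\gamma}$ is produced at each step — this bookkeeping, together with the $v$-truncation, is what forces the non-absorbable term $A_0\|f\|_{C_\ell^{2s+\alpha}(Q_1)}$ and reflects the gap between the exponents $\alpha$ and $\gamma$.
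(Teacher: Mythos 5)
Your proposal is correct and follows essentially the same route as the paper: cut off $f$ (so that Proposition \ref{p:blowup} applies with global-in-$v$ control), freeze the kernel at the origin, bound the frozen-coefficient error (your $\Phi$, the paper's $A$) in $C_\ell^\alpha$ by splitting off $K_{z_1}-K_{z_2}$ and treating it near and far from the origin via Assumption \ref{a:K-holder}, while the remaining increment is handled by Lemma \ref{l:estimA0-loc} with a sign-changing kernel of size $A_0$, then bound the cutoff commutator (your $h$, the paper's $B$) using $[f]_{C_\ell^\gamma}$ and the relation $\alpha=\tfrac{2s}{1+2s}\gamma$, and conclude with Proposition \ref{p:blowup} on $Q_{1/2}$. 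The differences are only cosmetic: your cutoff acts in $v$ alone rather than in all variables, your splitting of $K_{z_1}-K_0$ is the mirror image of the paper's $I_1+I_2$, and you use the $C_\ell^\gamma$ growth of $f$ where the paper uses $\|f\|_{L^\infty}$ for the tails.
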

%-----------------------------------
\begin{proof}
Let $\eta \in C^\infty_c((-1,0] \times B_1 \times \R^d)$. Assume that $\eta = 1$ in $Q_{3/4}$ and that $\eta(z) = 0$ whenever $z \notin Q_1$.

Let $\tilde f(z) = \eta(z) f(z)$. Obviously, we have
\[ \|\eta f\|_{C_\ell^{2s+\alpha'}((-1,0] \times B_1 \times \R^d)} \leq C \|f\|_{C_\ell^{2s+\alpha'}(Q_1)}.\]
We want to estimate the right hand side that would make $\tilde f$ satisfy an equation as in \eqref{e:kinetic-constant}. We \emph{freeze coefficients} first:
\[ K_0(w) = K(0,0,0,w).\]
and let $\LL_0$ be the corresponding integro-differential operator. A straight-forward computation shows that for all $z \in Q_{1/2}$,
\[ \tilde f_t + v \cdot \nabla_x \tilde f - \LL_0 \tilde f = c(z) + A(z) + B(z).\]
where \label{p:ab}
\begin{align*} 
 A(z) :&= \int_{\R^d} (f(t,x,v+w) - f(t,x,v)) [ K_z(w) - K_0(w)] \dd w, \\
B(z) :&=  \int_{\R^d} (\eta(t,x,v+w)- \eta(t,x,v) ) f(t,x,v+w) K_0(w) \dd w.
\end{align*}

\begin{lemma}
\label{l:estim-A}
\(  [A(z)]_{C_\ell^\alpha(Q_{1/2})} \lesssim A_0 \left( \|f\|_{C_\ell^{2s+\alpha}(Q_1)}
    + \|f\|_{C_\ell^\gamma((-1,0]\times B_1 \times \R^d)} \right).
\)
\end{lemma}
\begin{proof}
  We write $A(z_1) - A(z_2) = I_1 + I_2$ with
\begin{align*}
  I_1 & = \int (f(z_2 \circ (0,0,w))  - f(z_2)) (K_{z_1} (w) - K_{z_2} (w)) \dd w, \\
  I_2 & = \int (f(z_1 \circ (0,0,w)) -f(z_1) -f(z_2 \circ (0,0,w)) + f(z_2)) (K_{z_1} (w) - K_0 (w)) \dd w.
\end{align*}
As far as $I_1$ is concerned, we write $I_1 = I_1^{\text{out}} + I_1^{\text{in}}$ with
\begin{align*}
  I_1^{\text{out}} & = \int_{|w|\ge 1} (f(z_2 \circ (0,0,w))  - f(z_2)) (K_{z_1} (w) - K_{z_2} (w)) \dd w, \\
  I_1^{\text{in}} & = \int_{|w| \le 1} (f(z_2 \circ (0,0,w))  - f(z_2)) (K_{z_1} (w) - K_{z_2} (w)) \dd w.
\end{align*}
For the nonsingular part of $I_1$, we simply write
\begin{align*}
|  I_1^{\text{out}}| &\le \|f\|_{L^\infty((-1,0] \times B_1 \times \R^d)} \int_{|w| \ge 1} |K_{z_1} (w) - K_{z_2} (w)| \dd w\\
                     & \lesssim A_0 \|f\|_{L^\infty((-1,0] \times B_1 \times \R^d)} d_\ell (z_1,z_2)^\alpha.
\end{align*}
where we used \eqref{e:a0-infty}, which is a consequence of \eqref{a:K-holder}. 

We now turn to $I_1^{\text{in}}$. We write
\begin{align*}
  |  I_1^{\text{in}}|  \le &  \int_{|w| \le 1} |f(z_2 \circ (0,0,w))  - p_{z_2} ((0,0,w))| |K_{z_1} (w) - K_{z_2} (w)| \dd w \\
                           & + \frac12 \int_{|w| \le 1} | p_{z_2} ((0,0,w)) + p_{z_2} ((0,0,-w))- f(z_2)  | |K_{z_1} (w) - K_{z_2} (w)| \dd w \\
  \lesssim     & [f]_{C_\ell^{2s+\alpha}} \int_{|w| \le 1} |w|^{2s+\alpha} |K_{z_1} (w) - K_{z_2} (w)| \dd w \\
                           & + \|f\|_{C_\ell^{2s+\alpha}}  \int_{|w| \le 1} |w|^{2} |K_{z_1} (w) - K_{z_2} (w)| \dd w  \qquad \qquad \text{(only relevant if $2s+\alpha>2$)}\\
  \lesssim & A_0 \|f\|_{C_\ell^{2s+\alpha}} d_\ell (z_1,z_2)^\alpha.
\end{align*}
We used \eqref{e:a0-2s+alpha} which is a consequence of \eqref{a:K-holder}.

We now estimate $I_2$ thanks to Lemma~\ref{l:estimA0-loc}. This achieves the proof of the lemma.   
\end{proof}

We now estimate the $C_\ell^\alpha(Q_{1/2})$ norm of $B$.
% ---------------
\begin{lemma}
  \label{l:estim-B}
\(  [B]_{C_\ell^\alpha(Q_{1/2})} \lesssim \|f \|_{C_\ell^\gamma ((-1,0] \times B_1 \times \R^d)}.\)
\end{lemma}
%----------------
\begin{proof}
For $z_1, z_2 \in Q_{1/2}$, we compute,
\[ B(z_1) - B(z_2) = J_1 + J_2\]
with
\[
  \begin{cases}
    J_1 & = \int_{|w| > 1/4} (\eta(z_1\circ (0,0,w)) -  \eta(z_1)-\eta(z_2\circ (0,0,w)) + \eta (z_2)) f(z_1 \circ (0,0,w)) K_0(w) \dd w, \\
    J_2 & = \int_{|w| > 1/4} (\eta(z_2\circ (0,0,w)) - \eta (z_2)) (f(z_1 \circ (0,0,w))-f(z_1 \circ (0,0,w)))  K_0(w) \dd w.
  \end{cases}
\]

We turn to estimate $J_1$. Since $\eta$ is smooth, and in particular $C_\ell^\gamma$, we can apply Lemma~\ref{l:estimA0-loc} and get
\begin{align*}
|J_1| & \lesssim \|f \|_{L^\infty ((-1,0] \times B_1 \times \R^d)} d_\ell (z_1,z_2)^\alpha.
\end{align*}

As far as $J_2$ is concerned, we get
\begin{align*} 
 |J_2| &\leq 2 \|\eta\|_\infty [f]_{C_\ell^\gamma} \int_{|w| > 1/4}   d_\ell(z_1 \circ (0,0,w), z_2 \circ (0,0,w))^\gamma K_0(w) \dd w, \\
\intertext{using that $  \alpha = 2s \gamma / (1+2s)$,}
&\lesssim [f]_{C_\ell^\gamma} d_\ell(z_1,z_2)^{\frac{2s}{1+2s} \gamma} \lesssim [f]_{C_\ell^\gamma} d_\ell(z_1,z_2)^\alpha.
\end{align*}
This achieves the proof of the estimate for $B$.
\end{proof}

Thanks to Lemmas~\ref{l:estim-A} and \ref{l:estim-B}, we conclude the proof of
Proposition~\ref{p:localized-improvement-of-holder} by applying
Proposition~\ref{p:blowup} to $\tilde f$ (with $Q_1$ replaced with
$Q_{\frac12}$).
\end{proof}
%-------------------------------------------------------------------------------------
\begin{proof} [Proof of Theorem \ref{t:main}]
Without loss of generality, we perform an initial scaling to make sure that the constant $A_0$ is small (it is a subcritical parameter).

We will next prove the slightly stronger estimate
\begin{equation} \label{e:final-estimate}
 \|f\|_{C_{\ell,\ast}^{2s+\alpha}(Q_1)} \lesssim \|f\|_{C_\ell^\gamma([-1,0] \times B_1 \times \R^d)} + \|c\|_{C_{\ell,\ast}^\alpha(Q_1)} .
\end{equation}

Scaling and translating the estimate from
Proposition~\ref{p:localized-improvement-of-holder}, we get that for
any cylinder $Q_\rho(z_0) \subset Q_1$,
\begin{align*}
  \rho^{2s+\alpha} [f]_{C_\ell^{2s+\alpha}(Q_{\rho/4}(z_0))} \lesssim & \rho^{2s+\alpha'} [f]_{C_\ell^{2s+\alpha'}(Q_\rho(z_0))} 
                                                                   + (\rho^{\alpha} A_0) \rho^{2s+\alpha} [f]_{C_\ell^{2s+\alpha}(Q_\rho (z_0))}  \\
                                                                 &  + \rho^\gamma [f]_{C_\ell^\gamma((-1,0] \times B_1 \times \R^d)} 
                                                                   + \|f\|_{L^\infty((-1,0] \times B_1 \times \R^d)}  \\
                                                                 &+ \rho^{2s+\alpha} [c_0]_{C_\ell^\alpha(Q_\rho (z_0))}   
                                                                    +  \rho^{2s}\|c_0\|_{L^\infty(Q_\rho (z_0))} .
\end{align*}
For any $z_0$ in $Q_1$, we choose $\rho = d_\ell(z_0,\partial Q_1) < 1$ so that $Q_\rho(z_0) \subset Q_1$.

Note that $A_0 \rho^\alpha \leq A_0$ and $\rho^\gamma [f]_{C_\ell^\gamma((-1,0] \times B_1 \times \R^d)} \leq [f]_{C_\ell^\gamma((-1,0] \times B_1 \times \R^d)}$. We get
\[
  \|f\|_{C_{\ell,\ast}^{2s+\alpha}(Q_1)} \lesssim  [f]_{C_{\ell,\ast}^{2s+\alpha'}(Q_1)} + A_0 \|f\|_{C_{\ell,\ast}^{2s+\alpha}(Q_1)}
  +\|f\|_{C_\ell^\gamma([-1,0] \times B_1 \times \R^d)} + \|c\|_{C_{\ell,\ast}^\alpha(Q_1)} .
\]
We use the interpolation result of
Proposition~\ref{p:adimensional-interpolation} (see
Remark~\ref{r:inter-eps}) to get
$ [f]_{C_{\ell,\ast}^{2s+\alpha'}(Q_1)} \leq \eps
[f]_{C_{\ell,\ast}^{2s+\alpha}(Q_1)} + C_\eps \|f\|_{L^\infty(Q_1)}$.  This
achieves the proof of the main theorem.
\end{proof}

\bibliographystyle{plain}
\bibliography{skie}

\def\cprime{$'$}
\begin{thebibliography}{10}

\bibitem{chang}
H\'{e}ctor Chang-Lara and Gonzalo D\'{a}vila.
\newblock Regularity for solutions of nonlocal parabolic equations {II}.
\newblock {\em J. Differential Equations}, 256(1):130--156, 2014.

\bibitem{dfp}
Marco Di~Francesco and Sergio Polidoro.
\newblock Schauder estimates, {H}arnack inequality and {G}aussian lower bound
  for {K}olmogorov-type operators in non-divergence form.
\newblock {\em Adv. Differential Equations}, 11(11):1261--1320, 2006.

\bibitem{tj2018}
Hongjie Dong, Tianling Jin, and Hong Zhang.
\newblock Dini and {S}chauder estimates for nonlocal fully nonlinear parabolic
  equations with drifts.
\newblock {\em Anal. PDE}, 11(6):1487--1534, 2018.

\bibitem{eidelman1998}
S.~D. Eidelman, S.~D. Ivasyshen, and H.~P. Malytska.
\newblock A modified {L}evi method: development and application.
\newblock {\em Dopov. Nats. Akad. Nauk Ukr. Mat. Prirodozn. Tekh. Nauki},
  (5):14--19, 1998.

\bibitem{gilbarg2015elliptic}
David Gilbarg and Neil~S Trudinger.
\newblock {\em Elliptic partial differential equations of second order}.
\newblock springer, 2015.

\bibitem{golse2016harnack}
Francois Golse, Cyril Imbert, Cl{\'e}ment Mouhot, and Alexis Vasseur.
\newblock Harnack inequality for kinetic fokker-planck equations with rough
  coefficients and application to the landau equation.
\newblock {\em arXiv preprint arXiv:1607.08068}, 2016.

\bibitem{henderson2017c}
Christopher Henderson and Stanley Snelson.
\newblock {$C^\infty$} smoothing for weak solutions of the inhomogeneous landau
  equation.
\newblock {\em arXiv preprint arXiv:1707.05710}, 2017.

\bibitem{imbert2016schauder}
Cyril Imbert, Tianling Jin, and Roman Shvydkoy.
\newblock Schauder estimates for an integro-differential equation with
  applications to a nonlocal burgers equation.
\newblock {\em arXiv preprint arXiv:1604.07377}, 2016.

\bibitem{imbert2018toy}
Cyril Imbert and Cl{\'e}ment Mouhot.
\newblock A toy nonlinear model in kinetic theory.
\newblock {\em arXiv preprint arXiv:1801.07891}, 2018.

\bibitem{imbert2016weak}
Cyril Imbert and Luis Silvestre.
\newblock The weak harnack inequality for the boltzmann equation without
  cut-off.
\newblock {\em arXiv preprint arXiv:1608.07571}, 2016.

\bibitem{tj2015}
Tianling Jin and Jingang Xiong.
\newblock Schauder estimates for solutions of linear parabolic
  integro-differential equations.
\newblock {\em Discrete Contin. Dyn. Syst.}, 35(12):5977--5998, 2015.

\bibitem{lunardi1997}
Alessandra Lunardi.
\newblock Schauder estimates for a class of degenerate elliptic and parabolic
  operators with unbounded coefficients in {${\bf R}^n$}.
\newblock {\em Ann. Scuola Norm. Sup. Pisa Cl. Sci. (4)}, 24(1):133--164, 1997.

\bibitem{manfredini}
Maria Manfredini.
\newblock The {D}irichlet problem for a class of ultraparabolic equations.
\newblock {\em Adv. Differential Equations}, 2(5):831--866, 1997.

\bibitem{MP}
R.~Mikulevicius and H.~Pragarauskas.
\newblock On the {C}auchy problem for integro-differential operators in
  {H}{\"o}lder classes and the uniqueness of the martingale problem.
\newblock {\em Potential Anal.}, 40(4):539--563, 2014.

\bibitem{radkevich2008}
E.~V. Radkevich.
\newblock Equations with nonnegative characteristic form. {II}.
\newblock {\em Sovrem. Mat. Prilozh.}, (56, Differentsial\cprime nye Uravneniya
  s Chastnymi Proizvodnymi):3--147, 2008.

\bibitem{MR3482695}
Xavier Ros-Oton and Joaquim Serra.
\newblock Regularity theory for general stable operators.
\newblock {\em J. Differential Equations}, 260(12):8675--8715, 2016.

\bibitem{samorodnitsky1996stable}
Gennady Samorodnitsky, Murad~S Taqqu, and RW~Linde.
\newblock Stable non-gaussian random processes: stochastic models with infinite
  variance.
\newblock {\em Bulletin of the London Mathematical Society}, 28(134):554--555,
  1996.

\bibitem{sergio2004recent}
Polidoro Sergio.
\newblock Recent results on {K}olmogorov equations and applications.
\newblock In {\em Workshop on Second Order Subelliptic Equations and
  Applications}, volume~3, pages 129--143. GRAFICOM Edizioni, 2004.

\bibitem{serra1}
Joaquim Serra.
\newblock {$C^{\sigma+\alpha}$} regularity for concave nonlocal fully nonlinear
  elliptic equations with rough kernels.
\newblock {\em Calc. Var. Partial Differential Equations}, 54(4):3571--3601,
  2015.

\bibitem{joaquim2015}
Joaquim Serra.
\newblock Regularity for fully nonlinear nonlocal parabolic equations with
  rough kernels.
\newblock {\em Calc. Var. Partial Differential Equations}, 54(1):615--629,
  2015.

\bibitem{luis-cmp}
Luis Silvestre.
\newblock A new regularization mechanism for the {B}oltzmann equation without
  cut-off.
\newblock {\em Comm. Math. Phys.}, 348(1):69--100, 2016.

\bibitem{sayro1971}
Ja.~I. \v{S}atyro.
\newblock The smoothness of the solutions of certain degenerate second order
  equations.
\newblock {\em Mat. Zametki}, 10:101--111, 1971.

\end{thebibliography}
\end{document}